\begin{document}

\theoremstyle{plain}

\newtheorem{thm}{Theorem}[section]

\newtheorem{lem}[thm]{Lemma}
\newtheorem{Problem B}[thm]{Problem B}

\newtheorem{pro}[thm]{Proposition}
\newtheorem{cor}[thm]{Corollary}
\newtheorem{que}[thm]{Question}
\newtheorem{rem}[thm]{Remark}
\newtheorem{defi}[thm]{Definition}

\newtheorem*{thmA}{Theorem A}
\newtheorem*{thmB}{Theorem B}
\newtheorem*{corB}{Corollary B}
\newtheorem*{thmC}{Theorem  C}
\newtheorem*{thmD}{Theorem D}
\newtheorem*{thmE}{Theorem E}

\newtheorem*{thmAcl}{Main Theorem$^{*}$}
\newtheorem*{thmBcl}{Theorem B$^{*}$}

\newcommand{\Maxn}{\operatorname{Max_{\textbf{N}}}}
\newcommand{\Syl}{\operatorname{Syl}}
\newcommand{\dl}{\operatorname{dl}}
\newcommand{\Con}{\operatorname{Con}}
\newcommand{\cl}{\operatorname{cl}}
\newcommand{\Stab}{\operatorname{Stab}}
\newcommand{\Aut}{\operatorname{Aut}}
\newcommand{\Sym}{\operatorname{Aut}}
\newcommand{\Ker}{\operatorname{Ker}}
\newcommand{\fl}{\operatorname{fl}}
\newcommand{\Irr}{\operatorname{Irr}}
\newcommand{\SL}{\operatorname{SL}}
\newcommand{\FF}{\mathbb{F}}
\newcommand{\NN}{\mathbb{N}}
\newcommand{\N}{\mathbf{N}}
\newcommand{\bfC}{\mathbf{C}}
\newcommand{\bfO}{\mathbf{O}}
\newcommand{\bfF}{\mathbf{F}}
\newcommand{\OO}{\mathbf{O}}

\renewcommand{\labelenumi}{\upshape (\roman{enumi})}

\newcommand{\PSL}{\operatorname{PSL}}
\newcommand{\PSU}{\operatorname{PSU}}
\newcommand{\alt}{\operatorname{Alt}}
\newcommand{\GL}{\operatorname{GL}}
\newcommand{\SO}{\operatorname{SO}}
\newcommand{\SU}{\operatorname{SU}}

\providecommand{\V}{\mathrm{V}}
\providecommand{\E}{\mathrm{E}}
\providecommand{\ir}{\mathrm{Irr_{rv}}}
\providecommand{\Irrr}{\mathrm{Irr_{rv}}}
\providecommand{\re}{\mathrm{Re}}

\numberwithin{equation}{section}
\def\irrp#1{{\rm Irr}_{p'}(#1)}

\def\ibrrp#1{{\rm IBr}_{\Bbb R, p'}(#1)}
\def\Z{{\mathbb Z}}
\def\C{{\mathbb C}}
\def\Q{{\mathbb Q}}
\def\irr#1{{\rm Irr}(#1)}
\def\aut#1{{\rm Aut}}
\def\ibr#1{{\rm IBr}(#1)}
\def\irrp#1{{\rm Irr}_{p^\prime}(#1)}
\def\irrq#1{{\rm Irr}_{q^\prime}(#1)}
\def \c#1{{\cal #1}}
\def\cent#1#2{{\bf C}_{#1}(#2)}
\def\syl#1#2{{\rm Syl}_#1(#2)}
\def\nor{\triangleleft\,}
\def\oh#1#2{{\bf O}_{#1}(#2)}
\def\Oh#1#2{{\bf O}^{#1}(#2)}
\def\zent#1{{\bf Z}(#1)}
\def\det#1{{\rm det}(#1)}
\def\ker#1{{\rm ker}(#1)}
\def\norm#1#2{{\bf N}_{#1}(#2)}
\def\alt#1{{\rm Alt}(#1)}
\def\iitem#1{\goodbreak\par\noindent{\bf #1}}
   \def \mod#1{\, {\rm mod} \, #1 \, }
\def\sbs{\subseteq}

\def\gc{{\bf GC}}
\def\ngc{{non-{\bf GC}}}
\def\ngcs{{non-{\bf GC}$^*$}}
\newcommand{\notd}{{\!\not{|}}}
\newcommand{\Out}{{\mathrm {Out}}}
\newcommand{\Mult}{{\mathrm {Mult}}}
\newcommand{\Inn}{{\mathrm {Inn}}}
\newcommand{\IBR}{{\mathrm {IBr}}}
\newcommand{\IBRL}{{\mathrm {IBr}}_{\ell}}
\newcommand{\IBRP}{{\mathrm {IBr}}_{p}}
\newcommand{\ord}{{\mathrm {ord}}}
\def\id{\mathop{\mathrm{ id}}\nolimits}
\renewcommand{\Im}{{\mathrm {Im}}}
\newcommand{\Ind}{{\mathrm {Ind}}}
\newcommand{\diag}{{\mathrm {diag}}}
\newcommand{\soc}{{\mathrm {soc}}}
\newcommand{\End}{{\mathrm {End}}}
\newcommand{\sol}{{\mathrm {sol}}}
\newcommand{\Hom}{{\mathrm {Hom}}}
\newcommand{\Mor}{{\mathrm {Mor}}}
\newcommand{\Mat}{{\mathrm {Mat}}}
\def\rank{\mathop{\mathrm{ rank}}\nolimits}
\newcommand{\Tr}{{\mathrm {Tr}}}
\newcommand{\tr}{{\mathrm {tr}}}
\newcommand{\Gal}{{\it Gal}}
\newcommand{\Spec}{{\mathrm {Spec}}}
\newcommand{\ad}{{\mathrm {ad}}}

\newcommand{\Char}{{\mathrm {char}}}
\newcommand{\pr}{{\mathrm {pr}}}
\newcommand{\rad}{{\mathrm {rad}}}
\newcommand{\abel}{{\mathrm {abel}}}
\newcommand{\codim}{{\mathrm {codim}}}
\newcommand{\ind}{{\mathrm {ind}}}
\newcommand{\Res}{{\mathrm {Res}}}
\newcommand{\Lie}{{\mathrm {Lie}}}
\newcommand{\Ext}{{\mathrm {Ext}}}
\newcommand{\Alt}{{\mathrm {Alt}}}
\newcommand{\AAA}{{\sf A}}
\newcommand{\SSS}{{\sf S}}
\newcommand{\CC}{{\mathbb C}}
\newcommand{\CB}{{\mathbf C}}
\newcommand{\RR}{{\mathbb R}}
\newcommand{\QQ}{{\mathbb Q}}
\newcommand{\ZZ}{{\mathbb Z}}
\newcommand{\bfN}{{\mathbf N}}
\newcommand{\bfZ}{{\mathbf Z}}
\newcommand{\EE}{{\mathbb E}}
\newcommand{\PP}{{\mathbb P}}
\newcommand{\cG}{{\mathcal G}}
\newcommand{\cH}{{\mathcal H}}
\newcommand{\cQ}{{\mathcal Q}}
\newcommand{\GA}{{\mathfrak G}}
\newcommand{\cT}{{\mathcal T}}
\newcommand{\cS}{{\mathcal S}}
\newcommand{\cR}{{\mathcal R}}
\newcommand{\GCD}{\GC^{*}}
\newcommand{\TCD}{\TC^{*}}
\newcommand{\FD}{F^{*}}
\newcommand{\GD}{G^{*}}
\newcommand{\HD}{H^{*}}
\newcommand{\GCF}{\GC^{F}}
\newcommand{\TCF}{\TC^{F}}
\newcommand{\PCF}{\PC^{F}}
\newcommand{\GCDF}{(\GC^{*})^{F^{*}}}
\newcommand{\RGTT}{R^{\GC}_{\TC}(\theta)}
\newcommand{\RGTA}{R^{\GC}_{\TC}(1)}
\newcommand{\Om}{\Omega}
\newcommand{\eps}{\epsilon}
\newcommand{\al}{\alpha}
\newcommand{\chis}{\chi_{s}}
\newcommand{\sigmad}{\sigma^{*}}
\newcommand{\PA}{\boldsymbol{\alpha}}
\newcommand{\gam}{\gamma}
\newcommand{\lam}{\lambda}
\newcommand{\la}{\langle}
\newcommand{\ra}{\rangle}
\newcommand{\hs}{\hat{s}}
\newcommand{\htt}{\hat{t}}
\newcommand{\tn}{\hspace{0.5mm}^{t}\hspace*{-0.2mm}}
\newcommand{\ta}{\hspace{0.5mm}^{2}\hspace*{-0.2mm}}
\newcommand{\tb}{\hspace{0.5mm}^{3}\hspace*{-0.2mm}}
\def\skipa{\vspace{-1.5mm} & \vspace{-1.5mm} & \vspace{-1.5mm}\\}
\newcommand{\tw}[1]{{}^#1\!}
\renewcommand{\mod}{\bmod \,}

\marginparsep-0.5cm

\renewcommand{\thefootnote}{\fnsymbol{footnote}}
\footnotesep6.5pt

\newcommand{\sym}{{\mathrm {Sym}}}
\newcommand{\Sp}{{\mathrm {Sp}}}
\newcommand{\PSp}{{\mathrm {PSp}}}

\dedicatory{Dedicated to the memory of our good friend Jan Saxl} 

\title{Real Constituents of  Permutation Characters}

\author{Robert Guralnick}
\address{Department of Mathematics, University of Southern California, 3620 S. Vermont Ave., Los Angeles, CA 90089, USA}
\email{guralnic@usc.edu}
\author{Gabriel Navarro}
\address{Departament de Matem\`atiques, Universitat de Val\`encia, 46100 Burjassot,
Val\`encia, Spain}
\email{gabriel@uv.es}
  
\thanks{The first  author
gratefully acknowledges the support of the NSF grant DMS-1901595 and a Simons Foundation
Fellowship 609771.    The research of the second author is supported by Ministerio de Ciencia
e Innovaci\'on
PID2019-103854GB-I00 and FEDER funds.
Both authors would like to thank Thomas Breuer for many helpful calculations,
and to Gunter Malle for  a careful reading of the manuscript.}

\keywords{Real characters, real classes, Permutation Characters, Sylow normalizers}

\subjclass[2010]{Primary 20C15}

\begin{abstract}
We prove a broad generalization of a theorem of W. Burnside  about the existence of real characters of finite
groups to permutation characters.  If
$G$ is a finite group, under the necessary hypothesis 
of $\Oh{2'} G=G$,  
we can also give some control on 
the parity of  multiplicities of the constituents of permutation characters (a result that needs
the Classification of Finite Simple Groups).
Along the way, we   give a new characterization of the 2-closed
finite groups using odd-order real elements of the group. 
All this can be seen as a  contribution to Brauer's Problem 11 which asks how much
information about subgroups of a finite group can be determined by the character table.  
 \end{abstract}

\maketitle

\section{Introduction}

Recall that W.  Burnside proved that a finite group of odd order has no nontrivial irreducible
real valued characters.
This paper began with an elementary generalization of this theorem of Burnside, which we have not found in the literature.   

Before stating the result, we recall the trichotomy for    irreducible complex characters of a finite group $G$.
If $\chi$ is an irreducible complex character of $G$, then the Frobenius Schur indicator $\nu_2(\chi) =
|G|^{-1}\sum_{g \in G}  \chi(g^2)$  has possible values $0, \pm 1$  \cite[Theorem 4.5]{Is}.   We say that $\chi$ is of $+$ type if
$\nu_2(\chi)=1$.   This happens if there is
a representation affording $\chi$  which is defined over $\RR$, or equivalently, if
there is a $G$-invariant nonzero quadratic form on the corresponding module.  
We have that
$\nu_2(\chi)=-1$ if 
$\chi$ is real valued but the representation cannot defined over $\RR$. If $\chi$ is not
real valued, then $\nu_2(\chi)=0$, or equivalently,   the representation is not self dual.    
(See \cite{Is} for a more detailed discussion.)   

  \begin{thmA}
   Let $G$ be a finite group, and let $H$ be a subgroup of $G$. Then the permutation character
    $(1_H)^G$ has
a unique real-valued $+$ type irreducible constituent if and only if
 $[G:{\rm core}_G(H)]$ is odd. 
   \end{thmA}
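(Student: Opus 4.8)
The plan is to control all the relevant constituents through a single signed Frobenius--Schur count. Write $\pi=(1_H)^G$, put $a_\chi=\langle\pi,\chi\rangle\in\ZZ_{\ge 0}$ for $\chi\in\Irr(G)$, and set
$$S:=\sum_{\chi\in\Irr(G)}a_\chi\,\nu_2(\chi)=\frac1{|G|}\sum_{g\in G}\pi(g^2),$$
the second equality holding because $\nu_2(\chi)=|G|^{-1}\sum_g\chi(g^2)$ by definition. As $\nu_2$ equals $1,-1,0$ on the $+$-type, $-$-type and non-real irreducibles, we have $S=P_+-P_-$, where $P_\pm$ is the total multiplicity in $\pi$ of the constituents of the corresponding type; in particular $S\in\ZZ$ and, crucially, $S\le P_+$. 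I would then evaluate $S$ group-theoretically: with $\Omega=G/H$ and $\pi(g^2)=|\operatorname{Fix}_\Omega(g^2)|$,
$$\sum_{g\in G}\pi(g^2)=\sum_{\omega\in\Omega}\bigl|\{g\in G:g^2\in G_\omega\}\bigr|=[G:H]\,\bigl|\{g\in G:g^2\in H\}\bigr|,$$
since each point stabiliser is conjugate to $H$ and $|\{g:g^2\in K\}|$ depends only on the conjugacy class of $K$. Hence $S=|\{g\in G:g^2\in H\}|/|H|$.

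Next I would reduce to a faithful action. The kernel of $\pi$ is $N:=\mathrm{core}_G(H)$, so every irreducible constituent of $\pi$ is inflated from $\overline G=G/N$ acting on $\overline G/\overline H$, and inflation is a bijection onto these constituents preserving both multiplicity and $\nu_2$. Thus we may assume $N=1$, and the hypothesis becomes a statement about the parity of $|G|$. If $[G:N]$ is odd then $|\overline G|$ is odd, and Burnside's theorem gives that $1$ is the only real-valued irreducible character of $\overline G$; inflating, $1_G$ is the only real-valued constituent of $\pi$, so it is certainly the unique $+$-type one. This settles the implication ``$[G:\mathrm{core}_G(H)]$ odd $\Rightarrow$ unique.''

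For the reverse implication I would argue the contrapositive: assume $N=1$ and $|G|$ even, and produce a second $+$-type constituent. By the inequality $S\le P_+$ it suffices to show $S\ge 2$, i.e.\ that some $g\in G\setminus H$ satisfies $g^2\in H$; for if the set $\{g:g^2\in H\}$ strictly contains $H$ then, being of size $S\,|H|$ with $S$ a positive integer, it has size at least $2|H|$. Suppose no such $g$ exists, so $g^2\in H\Rightarrow g\in H$ for every $g$. Then every involution lies in $H$, and inducting on the $2$-power order (if $x$ has order $2^k$ then $x^2\in H$, forcing $x\in H$) every $2$-element of $G$ lies in $H$. Hence the subgroup $T$ generated by all $2$-elements is contained in $H$; but $T$ is normal and, as $|G|$ is even, nontrivial, contradicting $\mathrm{core}_G(H)=1$. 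Therefore $S\ge 2$, so $P_+\ge S\ge 2$; since $1_G$ contributes exactly $1$ to $P_+$, some nontrivial $+$-type irreducible is a constituent of $\pi$, giving a second real-valued $+$-type constituent.

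The hard part is conceptual rather than computational. What one can actually compute is the signed quantity $S$, and a nontrivial $+$-type constituent could a priori be hidden by cancellation against a $-$-type constituent. The device that resolves this is the one-sided bound $S\le P_+$, which converts the goal ``find a $+$-type constituent'' into the purely arithmetic statement $|\{g:g^2\in H\}|>|H|$; and that statement is delivered cleanly by core-freeness together with Cauchy's theorem applied to the $2$-elements of $G$.
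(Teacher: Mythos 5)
Your proof is correct, and its skeleton --- the Frobenius--Schur count $S=\frac{1}{|G|}\sum_{g}(1_H)^G(g^2)=P_+-P_-$ combined with Burnside's theorem for the odd-order direction --- is exactly the paper's. The difference lies in how the converse is finished. The paper obtains $S>1$ in two lines: for a permutation character $\pi$ one has $\pi(g^2)\ge\pi(g)$ for every $g$ (a fixed point of $g$ is a fixed point of $g^2$), and the inequality is strict at any preimage $x$ of an involution of $G/{\rm core}_G(H)$, since $\pi(x^2)=\pi(1)>\pi(x)$; summing over $G$ gives $S>[\pi,1_G]=1$, and integrality of $S$ does the rest. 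You instead derive the closed formula $S=|\{g\in G:g^2\in H\}|/|H|$, reduce to the core-free case by inflation, and produce an element outside $H$ squaring into $H$ by observing that otherwise every $2$-element of $G$ would lie in $H$, so the (normal, nontrivial by Cauchy) subgroup they generate would sit inside the core. Both arguments are elementary and complete. The paper's is shorter and needs no reduction modulo the core; your formula for $S$ as a ratio of set sizes is a nice by-product --- it makes the integrality and positivity of $S$ visible without character theory --- and your Cauchy/normal-closure step is essentially the group-theoretic shadow of the paper's ``strict inequality at one element'' step.
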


   \begin{proof}
   Write $K={\rm core}_G(H)$. If $G/K$ has odd order, then the trivial character $1_G$ is the only real-valued
   irreducible constituent of $(1_H)^G$, by Burnside's Theorem.
   Conversely, if $|G/K|$ is not odd, then let $xK$ be an involution of $G/K$. 
   Then $|G:H|=(1_H)^G(x^2)>(1_H)^G(x)$, since $\ker{(1_H)^G}=K$. 
   Therefore, using this and the fact that $(1_H)^G$ is a permutation character, we have that
   $${1 \over |G|} \sum_{g \in G} (1_H)^G(g^2) >
   {1 \over |G|} \sum_{g \in G} (1_H)^G(g)=[(1_H)^G, 1_G]=1 \, .$$
   Then
   $$1<{1 \over |G|} \sum_{g \in G} (1_H)^G(g^2)=\sum_{\chi \in \irr G} [(1_H)^G, \chi] \nu_2(\chi). $$
Since    $\nu_2(\chi)$ is $1, -1$ or $0$,  
    the proof easily follows.
   \end{proof}

    \medskip
  In most  cases that we have observed,  it turns out that there is
    a non-trivial real-valued {\sl odd multiplicity} irreducible
   constituent $\chi$ in $(1_H)^G$ 
   as long as $[G:{\rm core}_G(H)]$ is not odd.    
   (As we shall point out in Lemma \ref{bob} below, this implies
   that $\chi$ has $+$ type.) Multiplicities of constituents of irreducible characters in  permutation characters correspond
   to the dimensions of the irreducible representations of the Hecke algebra ${\rm End}((1_H)^G)$, so they
   are of interest.  
   
   \medskip
   
    We now give an example to show that, unfortunately, it is not always the case that there is a nontrivial
    real valued irreducible constituent of odd multiplicity  in $(1_H)^G$ when $[G: {\rm core}_G(H)]$
    is even.   Suppose that $p \equiv 3$ mod 4 is a prime, and let $q=p^a$, where
    $a>1$ is odd. Let $G={\rm AGL}(1,q)$, the semidirect product
    of the Galois field $F={\rm GF}(q)$ with
    the multiplicative group ${\rm GF}(q)^\times$. If $H \le G$ has order $2p$, then
    $G/FH$ is a cyclic group of odd order $q-1\over 2$, and 
    $$(1_H)^G={q^{a-1} -1 \over 2} \theta +\sum_{\lambda \in \irr{G/FH}} \lambda    \, ,$$
    where $\theta \in \irr G$ is the unique irreducible character of order $q-1$.
    Also, notice that  $q^{a-1} -1\over 2$ is even and $\theta$ is rational-valued. 
   \medskip

   Our main result in this paper is that this does not happen under the 
   additional hypothesis that $\Oh{2'} G=G$. (Recall that $\Oh{2'}G$
   is the smallest normal subgroup $N$ of $G$ such that $G/N$ has odd order.)
   For the proof, we require the Classification of Finite Simple Groups.
   
\begin{thmB}
Let $G$ be a finite group and let $H$ be a proper subgroup of
$G$.   If $\Oh{2'}G=G$, then $(1_H)^G$ has a
non-trivial real-valued irreducible constituent 
of odd multiplicity.
\end{thmB}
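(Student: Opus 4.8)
The plan is to combine the Frobenius--Schur parity mechanism behind Theorem A with a reduction to (almost) simple groups that invokes the Classification. Write $\pi=(1_H)^G$ and $m_\chi=[\pi,\chi]$ for $\chi\in\irr G$. I would first record the parity identity. Since $m_\chi^2\equiv m_\chi\pmod 2$ and $|H\backslash G/H|=[\pi,\pi]=\sum_\chi m_\chi^2$, and since the non-real irreducibles cancel in complex-conjugate pairs $\{\chi,\bar\chi\}$ of equal multiplicity, one gets
$$|H\backslash G/H|\equiv\sum_{\chi\in\irr G,\ \chi=\bar\chi}m_\chi\pmod 2.$$
As $1_G$ occurs with multiplicity one, if every nontrivial real constituent had even multiplicity the right-hand side would be odd. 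Hence whenever the number of $(H,H)$-double cosets is \emph{even}, a nontrivial real constituent of odd multiplicity already exists. This is the exact analogue of the proof of Theorem A and settles one half of the problem with no hypothesis beyond $H<G$.

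The substance of the theorem is the complementary case, where $|H\backslash G/H|$ is odd: here parity is uninformative and one must genuinely \emph{exhibit} a constituent. (The example $G=A_5$, $H\cong S_3$, where $\pi=1_G+\chi_4+\chi_5$ has three double cosets yet does contain nontrivial real odd-multiplicity constituents, shows this case is unavoidable.) This is where both $\Oh{2'}G=G$ and the Classification enter, and I would argue by minimal counterexample. Quotienting by $\mathrm{core}_G(H)$ preserves $\Oh{2'}G=G$ as well as the constituents of $\pi$ with their multiplicities and reality, so I may assume $H$ is core-free. I would also use Lemma \ref{bob}: a real constituent of odd multiplicity of a permutation character is of $+$ type, and dually a $-$-type real constituent always has even multiplicity; thus it suffices to produce any nontrivial real constituent of odd multiplicity, with no attention to its type.

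With $H$ core-free I would take a minimal normal subgroup $N\not\le H$ and split on its structure. If $N$ is abelian then $G$ is of affine type and the analysis mirrors the $\operatorname{AGL}(1,q)$ computation of the introduction, where the sole obstruction to a real odd-multiplicity constituent is an odd-order quotient; since $\Oh{2'}G=G$ forbids this, I expect to settle this case directly by Clifford theory relative to $N$. If $N$ is nonabelian I would reduce, via uniqueness of the minimal normal subgroup and standard product/diagonal arguments, to the case that $G$ is almost simple, transferring the hypothesis to the relevant section. This step is delicate precisely because the condition $\Oh{2'}G=G$ need not pass to subgroups or sections, which is the very feature that makes the hypothesis necessary (and which is violated in the affine counterexample).

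The main obstacle is the final almost-simple case, which is what forces CFSG: for $S\trianglelefteq G\le\Aut(S)$ with $S$ simple and $H<G$ proper, one must guarantee a nontrivial real $\chi\in\irr G$ with $[\chi_H,1_H]$ odd. I would attack this by exploiting the abundance of real (indeed $2$-rational, odd-degree) irreducible characters of simple groups and controlling their restrictions to $H$, working through the families of simple groups and settling the small and sporadic cases by explicit character-table computation. Establishing uniformly that such a character survives in $\pi$ for \emph{every} proper $H$ is the crux of the argument.
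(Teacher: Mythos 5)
There is a genuine gap, in two places. First, your only arithmetic input is the double-coset parity $|H\backslash G/H|\equiv\sum_{\chi=\bar\chi}m_\chi \pmod 2$, and you concede it says nothing when $|H\backslash G/H|$ is odd. The engine of the paper's proof is a different and much stronger parity statement (Lemma \ref{intersect}): if $1_G$ were the only real-valued constituent of odd multiplicity, then writing $(1_H)^G=1_G+2\sum_i a_i\epsilon_i+\sum_j b_j(\tau_j+\bar\tau_j)$ and evaluating at a real element $x$ with $x^G\cap H=\emptyset$ gives $0=1+2\alpha$ with $\alpha$ an algebraic integer, a contradiction; hence in a counterexample $[G:H]$ is odd and $H$ meets \emph{every} real class of $G$. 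You never derive this, and without it your plan stalls: in the abelian case the actual argument is that $H\cap A=1$ and $\cent HA=1$ force a Sylow $2$-subgroup of $H$ (nontrivial because $\Oh{2'}G=G$) to invert some $1\ne a\in A$, producing a real element whose class misses $H$ --- a contradiction with the property above, not a Clifford-theoretic computation ``mirroring $\operatorname{AGL}(1,q)$''; and in the nonabelian case it is precisely the property ``$H\cap L_i$ meets every real class of $L_i$'' that turns the problem into the classification question that CFSG answers.

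Second, the almost-simple endgame is not a matter of ``exploiting the abundance of real characters and working through the families'': the whole point is that there \emph{are} simple groups with proper subgroups of odd index meeting every real class ($M_{22}$, $M_{23}$, and $\PSL_n(q)$ with $n$ odd, by Theorem \ref{simplereal} and Corollary \ref{nonmax}), so no uniform existence argument over all proper $H$ is available. The paper instead classifies these pairs completely --- including the non-maximal subgroups, which your reduction ``to almost simple'' glosses over, since one must work with the socle $L_1\times\cdots\times L_t$ and the intersections $H\cap L_i$ rather than a single almost simple section --- then checks in each surviving case that $(1_{H\cap L_i})^{L_i}$ has a nontrivial real constituent of multiplicity exactly $1$, and finally lifts it to a constituent of $(1_H)^G$ of multiplicity $1$ using the fact that over a normal subgroup of odd index a real-valued character has a unique real-valued character above it (Lemma \ref{belowover}). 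Your outline has the right skeleton (minimal counterexample, core-free reduction, dichotomy on the minimal normal subgroup, CFSG), but both the lemma that drives every reduction and the mechanism that produces the odd-multiplicity constituent in the unavoidable exceptional cases are missing.
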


    There is a great deal of literature on permutation characters, much inspired by Richard Brauer's Problem 11 of his celebrated
    list:    {\sl Given the character table of a group $G$, how much information about the existence of subgroups can be obtained?} Notice that Theorem B adds, in principle,  a new condition to check for a given character of a finite group if it is a permutation character or not. (See  \cite[Theorem 5.18]{Is}).
    
   \medskip
 
   As we shall see, a key  idea in the proof of Theorem B is that if $1_G$
   is the only real-valued irreducible
   constituent of $(1_H)^G$ with odd multiplicity, then $[G:H]$
   is odd, and  every real conjugacy
   class of $G$ meets $H$. (See Lemma \ref{intersect}.)
    If $H$ is, say,  a non-normal Sylow 2-subgroup
    of $G$, then an elementary argument
    (using Baer's theorem and involutions, 
    see Proposition 6.4
 of \cite{DNT}) produces a non-trivial
    odd order real element, hence proving Theorem B in this case.
      In general, however, there are
     examples of proper subgroups $H$ meeting all the real classes of 
    groups $G$. 
    And in order to prove Theorem B, we shall need to classify, essentially,
  all the cases whenever $H$ is a maximal subgroup of  odd index in a simple group $G$
  and $H$ intersects every real conjugacy class. 
  We believe that this result  might have some interest on its own.   Indeed,
  we classify all primitive permutation groups in which every real element has a fixed
  point.  We only state the result for $n$ odd since that is what we use.   (See 
  Theorem \ref{primitiverealeven} for the corresponding result for $n$ even.)
  
  \begin{thmC}  Let  $G$ be a primitive permutation group acting on
a set $X$ of odd cardinality.    Assume that every real element of the socle of $G$ fixes a point
of $X$.   Let $M$ be a point stabilizer.  Then  $G$ has a unique minimal 
normal subgroup $A$ and either $G$ has odd order or 
$A = L_1 \times \ldots \times L_t$   with $L_i \cong L$ a nonabelian simple
group. Also,  $M \cap A = D_1 \times \ldots \times D_t$,  $D_i$
is maximal in $L_i$ and intersects every real conjugacy class of $G$ which
intersects $L_i$.   Furthermore 
$(1_M)^G$  has a nontrivial orthogonal constituent of multiplicity $1$.
Moreover either $G/A$ has odd order or $L \cong \PSL_3(4)$ and 
$G/A_1$ has odd order where $A_1/A$ is  an elementary abelian $2$-group
and $A_1$ normalizes each $L_i$.  
Indeed,  $L_i \cong M_{22}, M_{23}$ or $\PSL_n(q)$ with $n$ odd
and $D_i$ is as given in Theorem \ref{simplereal}. 
 \end{thmC}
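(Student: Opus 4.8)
The plan is to run the O'Nan--Scott classification against the two rigid constraints we are given: the degree $|X|=[G:M]$ is odd, so $M$ contains a full Sylow $2$-subgroup of $G$, and every real element of $A:=\soc(G)$ lies in some point stabilizer $M^g$. I would first dispose of the affine case. If $A$ is abelian it is elementary abelian of order $|X|$, hence of odd prime-power order, and a point stabilizer is a complement meeting $A$ trivially; so the hypothesis forces $A$ to contain no nontrivial real element. If $|G|$ were even then a point stabilizer would have even order (as $|A|$ is odd) and would contain an involution $t$. Since $t$ acts coprimely on $A$ we may write $A=\mathbf{C}_A(t)\oplus[A,t]$, and every $0\neq v\in[A,t]$ satisfies $v^t=v^{-1}$ and is therefore real. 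Thus $[A,t]=0$, so $t$ centralizes $A$ and acts trivially, contradicting faithfulness. Hence in the affine case $G$ has odd order, which is the first alternative of the theorem.

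From now on assume $|G|$ is even, so $A=L_1\times\cdots\times L_t$ with $L_i\cong L$ nonabelian simple. Because a nonabelian simple group has even order, any \emph{regular} minimal normal subgroup would force $|X|$ to be even; this removes the twisted wreath type and the two holomorph types, and in particular shows that $A$ is the \emph{unique} minimal normal subgroup of $G$. The same parity obstruction kills the simple and compound diagonal types, whose degrees are proper powers $|L|^{t-1}$ (or larger), again even once a nonabelian factor is present. What survives is the almost simple type and the product action type. In a product action $G\le H\wr S_k$ on $X=\Gamma^k$, with $H$ primitive of socle $L$ on $\Gamma$ (the diagonal base again being excluded by even degree), a tuple $(x_1,\dots,x_k)\in A$ fixes a point of $\Gamma^k$ exactly when each $x_i$ fixes a point of $\Gamma$. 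Applying the hypothesis to elements supported on a single coordinate, namely to $(x,1,\dots,1)$ which is $G$-real whenever its $G$-class is real, I would conclude that the condition descends to a single factor: $D:=\soc(H)_\gamma$ meets every $G$-real conjugacy class meeting $L$. This yields $M\cap A=D_1\times\cdots\times D_t$ with each $D_i\cong D$ of odd index in $L_i$.

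We are thereby reduced to a single simple factor $L$ together with a subgroup $D$ of odd index meeting every $G$-real class that meets $L$ --- precisely the hypothesis of Theorem~\ref{simplereal}, which I would invoke to conclude that $L\cong M_{22}$, $M_{23}$, or $\PSL_n(q)$ with $n$ odd, that $D$ is the subgroup listed there (in particular maximal in $L$), and hence that each $D_i$ is maximal in $L_i$. It then remains to bound $G/A$. Writing $G/A\le\Out(L)\wr S_t$, I would show that odd index of $M$ together with the requirement that each $D_i$ still meet every $G$-real class forces the image of $G$ in $\Out(L)$ to have odd order, the point being that adjoining an even-order outer automorphism creates new real classes of $L$ which the odd-index subgroup $D$ can no longer meet. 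The sole exception is $L\cong\PSL_3(4)$, where a Klein four-group of graph and field automorphisms can be adjoined without violating the condition; this produces the elementary abelian $2$-group $A_1/A$ of the statement and must be handled by hand from the known subgroup and class data of $\PSL_3(4)$ and its automorphic extensions.

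Finally, for the orthogonal constituent of multiplicity $1$, I would again pass through a factor: on the simple level $(1_D)^L$ carries a nontrivial real $+$-type constituent of multiplicity one, read off from the data of Theorem~\ref{simplereal}, and I would show that it survives inflation and induction to give such a constituent of $(1_M)^G$, with Theorem~A guaranteeing that a nontrivial real $+$-type constituent exists at all once $[G:{\rm core}_G(M)]$ is even. The main obstacle, in my view, is not the O'Nan--Scott bookkeeping but the control of $G/A$: ruling out even-order outer automorphisms uniformly across the family $\PSL_n(q)$ (as well as for $M_{22}$ and $M_{23}$) and isolating exactly the $\PSL_3(4)$ exception requires careful tracking of how diagonal, field and graph automorphisms act on the real classes that $D$ must meet, and this is where the argument is most delicate. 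The deepest external input throughout is Theorem~\ref{simplereal} itself, on which the whole classification rests.
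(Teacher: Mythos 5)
Your overall route is the paper's: reduce via the structure of primitive groups of odd degree to a single simple factor, feed the resulting pair $(L_i,D_i)$ into Theorem \ref{simplereal}, and then control $G/A$. The affine case, the uniqueness of the minimal normal subgroup, and the product decomposition of $M\cap A$ all match the paper's argument. But there are two genuine gaps. First, you invoke Theorem \ref{simplereal} for $(L_i,D_i)$, yet that theorem's hypothesis requires $D_i$ to be \emph{maximal} in $L_i$; in the product-action (and even the almost simple) situation $D_i=M\cap L_i$ is only known to have odd index and to be normalized by a maximal subgroup of the almost simple group $\norm G{L_i}/\cent G{L_i}$ — it need not be maximal in $L_i$ a priori. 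The paper closes this with Corollary \ref{nonmax}, which classifies the \emph{non-maximal} odd-index subgroups meeting all real classes (showing, e.g., that for $\PSL_n(q)$ such a subgroup lies in a unique maximal parabolic, whence $D_i$ equals that parabolic because $\norm{G_i}{D_i}$ is maximal), together with Theorem \ref{almostsimplereal}. As written, the maximality of $D_i$ — which is part of the conclusion you are proving — is assumed rather than derived.

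Second, your control of $G/A$ is incomplete. Showing that the automizer of each component (the image in $\Out(L)$) has odd order does not show that $G/A$ has odd order: $G/A$ embeds in $\Out(L)\wr S_t$ and you must also exclude $2$-elements acting nontrivially on the set of components. The paper does this with a specific trick absent from your sketch: if a $2$-element $g$ (after adjusting by an element of $L_1\times L_2$) swaps the coordinates of $L_1$ and $L_2$, then for $x\in L_1$ fixed-point-free the socle element $c=(x,x^{-1},1,\dots,1)$ is inverted by $g$, hence real in $G$, yet fixes no point — contradicting the hypothesis. Without this (or a substitute), the conclusion that $G/A$ has odd order is not established. The multiplicity-one orthogonal constituent is likewise only sketched, but your outline there — a multiplicity-one constituent of $(1_{D_1})^{L_1}\times\cdots\times(1_{D_t})^{L_t}$ pushed up through the odd-order quotient via Lemma \ref{belowover} and made $+$ type by Lemma \ref{bob} — is essentially the argument the paper carries out in Theorem \ref{oddmult}.
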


  \medskip
  
  In the special case where
   $H$ is a proper 2-Sylow normalizer of $G$, a case with particular interest,
   we
   shall prove (again using the Classification)
   that not only $H$ does not meet some real
   class of $G$,  but something stronger.       
   \medskip
   
   \begin{thmD}
Let $G$ be a finite group, and let $P \in \syl 2 G$. Then the following
are equivalent.
\begin{enumerate}
\item  $P$ is normal in $G$.
\item  There are no nontrivial real elements in $G$ of odd order.
\item  
The 2-Brauer character $({1_{\norm GP}}^0)^G$
has no nontrivial  
real-valued 
$2$-Brauer irreducible constituent with odd multiplicity. 
\item  Every real element of odd order of $G$ normalizes a $G$-conjugate of $P$.
\end{enumerate}
\end{thmD}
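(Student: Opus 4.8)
The plan is to prove the cycle $(1)\Rightarrow(2)\Rightarrow(4)\Rightarrow(1)$ and then to obtain $(3)$ by establishing $(1)\Leftrightarrow(3)$ separately; throughout write $H=\mathbf{N}_G(P)$ and recall that $[G:H]$ is odd, so that $G/H$ is identified with the odd-sized set of Sylow $2$-subgroups of $G$. The implication $(1)\Rightarrow(2)$ is immediate from Burnside's theorem: if $P\trianglelefteq G$ then $G/P$ has odd order, a real element of odd order maps to a real element of a group of odd order, hence to the identity, so it lies in the $2$-group $P$ and is trivial. The implication $(2)\Rightarrow(4)$ is vacuous, the only real element of odd order being $1$. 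The reformulation driving everything else is that, writing $\varphi=({1_H}^0)^G$, for an element $x$ of odd order we have $\varphi(x)=(1_H)^G(x)$, which equals the number of Sylow $2$-subgroups normalized by $x$; thus $\varphi(x)\neq 0$ precisely when $x$ normalizes some conjugate of $P$, i.e. when the class of $x$ meets $H$. Consequently $(4)$ says exactly that every real class of odd order of $G$ meets $H$.

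For $(1)\Leftrightarrow(3)$: if $P\trianglelefteq G$ then $H=G$ and $\varphi=1_G^{\,0}$ is the trivial Brauer character, which has no nontrivial constituent, so $(1)\Rightarrow(3)$ is trivial. For the converse I would prove the contrapositive by a modular analogue of Lemma \ref{intersect}: if some real class of odd order fails to meet $H$, then $\varphi$ has a nontrivial real-valued irreducible $2$-Brauer constituent of odd multiplicity. As in the ordinary case this rests on Brauer's permutation lemma, which shows that complex conjugation induces compatible permutations of the irreducible $2$-Brauer characters and of the $2$-regular classes, so that these two sets have equally many real points; comparing the parities of the multiplicities of the real constituents of $\varphi$ against the real $2$-regular classes on which $\varphi$ is supported then forces a nontrivial real constituent of odd multiplicity exactly when a real $2$-regular class is omitted by $H$. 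Together with the already-proved $(4)\Rightarrow(1)\Rightarrow(3)$ this gives $(3)\Rightarrow(1)$. The delicate point is the Frobenius--Schur bookkeeping in characteristic $2$, but only the qualitative count is needed, so the argument parallels Lemma \ref{intersect}.

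The substance of the theorem is $(4)\Rightarrow(1)$, which I would prove in contrapositive form: if $P$ is not normal in $G$ then $G$ has a real element of odd order that normalizes no Sylow $2$-subgroup, i.e. a real $2$-regular class missing $H$. Take a minimal counterexample $G$ and set $K=\mathbf{O}_{2'}(G)$. If $G/K$ is not $2$-closed one recurses on $G/K$ and lifts the witnessing real element of odd order back through the odd-order kernel by a lifting lemma for real $2$-regular elements (an element normalizing no Sylow $2$-subgroup of $G/K$ cannot normalize one in $G$). If instead $G/K$ is $2$-closed while $G$ is not, then $P$ acts nontrivially on $K$ and one produces the required real element of odd order inside $K$ by a coprime-action argument of Baer--Glauberman type, as in Proposition 6.4 of \cite{DNT}. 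This reduces to $K=\mathbf{O}_{2'}(G)=1$, where $F^{*}(G)=\mathbf{O}_2(G)E(G)$. If $E(G)=1$ the group is $2$-constrained with self-centralizing $\mathbf{O}_2(G)$, and the non-normality of $P$ is again handled by the elementary involution argument. If $E(G)\neq 1$ there is a nonabelian simple section, and I pass to a primitive action of $G$ on $G/M$ for a maximal $M\ge H$ (of odd degree) and bring Theorem C and the classification in Theorem \ref{simplereal} to bear.

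The main obstacle is precisely this last, simple-group step: one must show that every relevant almost simple (or product-of-simples) configuration which is not $2$-closed contains a real element of odd order lying in no Sylow $2$-normalizer. This is subtle because the naive candidates can fail; for instance in $G=\PSL_2(5)\cong A_5$ every real element of order $3$ does normalize a Sylow $2$-subgroup, and it is the real elements of order $5$ that witness the failure of $(4)$, so the correct element depends on a genuine analysis across the families of simple groups. That analysis is exactly what Theorem \ref{simplereal} and Theorem C package, and it is where the Classification of Finite Simple Groups enters.
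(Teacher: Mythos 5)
Your architecture is sound and you have correctly isolated $(iv)\Rightarrow(i)$ as the hard, CFSG-dependent implication, but the plan for its simple-group core does not go through as stated. You propose to pass to a maximal $M\supseteq H=\norm GP$ and invoke Theorem C (Theorem \ref{primitivereal}) together with Theorem \ref{simplereal}. There are two obstructions. First, a hypothesis mismatch: condition (iv) only tells you that real elements \emph{of odd order} are conjugate into $H$, while Theorem C requires \emph{every} real element of the socle to fix a point of $G/M$; a real element $x$ with $x_{2'}$ conjugate into $H$ and $x_2$ conjugate into $P$ need not itself be conjugate into $M$, so you cannot verify the hypothesis. Second, and more seriously, even if you could apply Theorem C its conclusion is a nonempty list of genuine examples ($M_{22}$, $M_{23}$, $\PSL_n(q)$ with $n$ odd, with $M$ a specific large maximal subgroup), so you would still have to show separately that in each of these groups some real odd-order element normalizes no Sylow $2$-subgroup --- e.g.\ in $M_{23}$ every real class meets $M_{22}$, yet elements of order $5$ or $7$ certainly do not normalize a Sylow $2$-subgroup. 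The paper therefore proves a dedicated statement (Theorem \ref{simplecase}): for every nonabelian simple group there is a real element of odd order normalizing no conjugate of $P$; its proof resembles that of Theorem \ref{simplereal} but targets the Sylow normalizer (e.g.\ for groups of Lie type in characteristic $2$ one must rule out the Borel subgroup, not just arbitrary parabolics). Your reduction to the simple case is then essentially the paper's Theorem \ref{real}, though note that your coprime-action step (the case ``$G/K$ is $2$-closed but $G$ is not'') only produces \emph{some} real element of odd order in $K$, not one avoiding every Sylow normalizer; the paper works with a \emph{minimal} normal subgroup $N$ precisely so that $\cent NP$ is $1$ or $N$ and the produced element provably normalizes no conjugate of $P$.

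The equivalence $(i)\Leftrightarrow(iii)$ also needs more than you supply. Brauer's permutation lemma counts real irreducible Brauer characters against real $2$-regular classes but does not control multiplicities in a permutation character, so ``comparing parities'' is not yet an argument. The paper's proof (Theorem \ref{cor}) uses two concrete ingredients you omit: (a) every nontrivial real-valued irreducible $2$-Brauer character has even degree (\cite[Theorem 2.30]{N}), which combined with $[G:\norm GP]$ odd forces the trivial Brauer character to occur with \emph{odd} multiplicity $a$; and (b) for a nontrivial real element $x$ of odd order, the set of Sylow $2$-subgroups normalized by $x$ has even cardinality (an inverting $2$-element acts on this set without fixed points), so $({1_{\norm GP}}^0)^G(x)$ is even; writing this value as $a+2\alpha$ with $\alpha$ an algebraic integer yields the contradiction. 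This is stronger than your proposed route via ``some real odd-order class misses $H$'' and is what actually closes $(iii)\Rightarrow(i)$ without detouring through $(iv)$. Your treatment of $(i)\Rightarrow(ii)\Rightarrow(iv)$ and of $(i)\Rightarrow(iii)$ is fine.
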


We remark that the equivalence of the first three conditions
is elementary, while the fourth is proved 
 using the Classification.   
 
 %%%%% can we make have + type   %%%%

  \medskip
  
 Theorem D(iii) naturally suggests a question: is Theorem  B true for Brauer characters? And the answer is
 ``no." For instance $G={\rm PSL}_3(2)$ has  $H={\sf S}_4$  as a subgroup
 and the 2-Brauer character $({1_H}^0)^G$ decomposes as the trivial 2-Brauer
 character of $G$ plus two non-real-valued irreducible 2-Brauer characters
 of degree 3.   Indeed, Theorem B fails for $p$-Brauer characters for any $p$.
 Let $G=\PSL_n(p)$ with $n$ odd.  By \cite[1.8]{ZS},   the   composition factors in characteristic $p$
 of $(1_H)^G$, with $H$ the stabilizer of a $1$-space, are the trivial module and 
 $(p-1)\omega_i$ where the $\omega_i$ are the fundamental dominant weights for $\SL_n(p)$.
 In particular, since $n$ is odd, none of the nontrivial composition factors are self dual and
 so none of the nontrivial  irreducible $p$-Brauer characters are real.  
 \medskip

On the other hand, in Section \ref{sec:2brauer} below, we shall prove that analogs of
Theorems A and B do  hold for subgroups of odd index and  for 2-Brauer characters if certain composition factors
are not involved.
 
 \medskip

It is often the case that theorems on real-valued characters can be improved to
rational-valued characters. This is clearly not the case with the results in this paper. For instance,
if $G={\sf D}_{10}$ is the dihedral group
 and $P \in \syl 2 G$, then $(1_P)^G$ has a unique rational-valued irreducible constituent
 with odd multiplicity. 
 
\medskip

\section{Primitive Groups and Real Elements} \label{sec:primitive} 

In this section, we classify the primitive permutation groups such that all real elements
have fixed points.   We apply the result in the case that the degree is odd but the methods to deal with
the general case are only a bit more onerous than the odd case and there are very few  examples in the even case.        

The critical case is when the group is almost simple.       The existence of the Mathieu group examples shows that
it is unlikely that there is a proof without the classification of finite simple groups.  

Before we start the proofs of the simple case, we note the following elementary results.

\begin{lem} \label{l:realityinnormal}  Let $N$ be a normal subgroup of $G$ and let $x \in N$.
\begin{enumerate}
\item If $x$ is real in $N$, then so is $x^g$ for any $g \in G$.
\item If $[G:N\cent Gx]=m$ is odd and $x$ is real in $G$, then 
$x$ is real in $N$.
\end{enumerate} 
\end{lem}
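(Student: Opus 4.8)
For part (i) the plan is to use that conjugation by $g$ is an automorphism of $G$ carrying $N$ into itself (as $N \nor G$) and that reality is preserved by automorphisms. Explicitly, if $n \in N$ inverts $x$, i.e. $x^n = x^{-1}$, then applying the map $y \mapsto y^g$ to this relation gives $(x^g)^{n^g} = (x^{-1})^g = (x^g)^{-1}$, and $n^g \in N$ since $N$ is normal; so $n^g$ inverts $x^g$ inside $N$. This is a single line and presents no difficulty.

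For part (ii), which is the real content, the idea is to turn the parity hypothesis into the triviality of an element of order dividing $2$ in an odd-order group. Since $x$ is real in $G$, I fix $g \in G$ with $x^g = x^{-1}$ and set $K = N\cent Gx$, the subgroup of index $m$. Two short computations drive the argument: first, $g$ normalizes $K$, because $K^g = N^g(\cent Gx)^g = N\cent G{x^{-1}} = N\cent Gx = K$, using $N \nor G$ and $\cent G{x^{-1}} = \cent Gx$; second, $g^2 \in \cent Gx \le K$, because $x^{g^2} = (x^{-1})^g = (x^g)^{-1} = x$. Hence $gK$ has order dividing $2$ in $\norm GK / K$.

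The decisive step is then purely arithmetic: $[\norm GK : K]$ divides $[G:K] = m$, which is odd, so $\norm GK/K$ has odd order and the element $gK$, of order dividing $2$, must be trivial. Thus $g \in K = N\cent Gx = \cent Gx N$, and writing $g = cn$ with $c \in \cent Gx$ and $n \in N$, I read off $x^n = (x^c)^n = x^{cn} = x^g = x^{-1}$ (using $x^c = x$), exhibiting an inverting element $n \in N$; hence $x$ is real in $N$.

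The crux I expect — and the only place the hypothesis enters — is the choice of the auxiliary subgroup $K = N\cent Gx$ together with its normalizer, which reduces the distinction between "real in $N$" and "real in $G$" to the question of whether an inverting element lands in $K$; once this is set up, the oddness of $m$ closes everything via Lagrange. As a conceptual check (which I would not need for the actual write-up), the same fact can be seen by letting inversion act as a $G$-equivariant involution on the set of $N$-classes contained in the $G$-class $x^G$, a set of size $m$, where reality in $N$ is precisely the statement that the class of $x$ is a fixed point of that involution.
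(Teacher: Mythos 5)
Your proof is correct. Part (i) matches the paper's (which simply declares it obvious). For part (ii) you take a genuinely different route: the paper lets inversion act on the set of $m$ distinct $N$-classes whose union is $x^G$ (a $G$-set of odd size $m=[G:N\cent Gx]$), concludes that this involution fixes some $N$-class, and then invokes part (i) to transfer reality back to $x$ itself. You instead fix an inverting element $g$ and force it into $K=N\cent Gx$ directly, via the observations that $g$ normalizes $K$, that $g^2\in\cent Gx\le K$, and that $[\norm GK:K]$ divides the odd number $m$, so the coset $gK$ of order dividing $2$ is trivial; writing $g=cn$ then hands you an inverting element of $N$. Every step checks out. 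Your argument is slightly more self-contained (part (ii) does not lean on part (i)) and in fact yields the marginally stronger conclusion that \emph{every} element of $G$ inverting $x$ lies in $N\cent Gx$; the paper's counting argument is shorter to state and is the template reused elsewhere in the paper (e.g.\ for the parity arguments with permutation characters), which is presumably why the authors phrase it that way. You correctly identify the counting argument as an equivalent alternative in your closing remark, though note that as stated it only shows \emph{some} $N$-class in $x^G$ is inversion-fixed, so one still needs part (i) to conclude for $x$ itself.
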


\begin{proof} The first statement is obvious.    Consider (ii).  Then
$x^G$ is the union of $m$ distinct conjugacy classes of $N$.
Note that inversion permutes these $m$ conjugacy classes and since
$m$ is odd, it must fix at least one of them.  Thus, some element of
$x^G$ is real in $N$ and so by (i), $x$ is real in $N$. 
\end{proof}

In particular, this gives the following result:

\begin{cor} \label{c:unipotent}  Let $G=\SL_n(q)$ and let 
$g \in G$ be unipotent.  Assume that either $q$ is even or
that $g$ has a Jordan block of odd size (which is always the
case when $n$ is odd).   Then $g$ is real in $G$.
\end{cor}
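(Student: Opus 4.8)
The plan is to lift the question to $\widetilde{G}:=\GL_n(q)$, where reality of unipotent elements is automatic, and then push it down to $\SL_n(q)$ by means of Lemma~\ref{l:realityinnormal}(ii). First I would recall that the unipotent classes of $\widetilde G$ are parametrised by Jordan type, that is, by the partition of $n$ recording the block sizes. Since $g^{-1}-1=-g^{-1}(g-1)$ with $g^{-1}$ invertible, the matrices $(g-1)^i$ and $(g^{-1}-1)^i$ have equal rank for every $i\ge 0$, so $g$ and $g^{-1}$ share the same Jordan type and are consequently conjugate in $\widetilde G$. Hence $g$ is real in $\widetilde G$.

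Next I would set $C:=\cent{\widetilde G}{g}$ and apply Lemma~\ref{l:realityinnormal}(ii) with the normal subgroup $\SL_n(q)\trianglelefteq\widetilde G$ and $x=g$: it suffices to prove that $[\widetilde G:\SL_n(q)\,C]$ is odd. Because the determinant map induces an isomorphism $\widetilde G/\SL_n(q)\cong\FF_q^{\times}$ and carries $\SL_n(q)\,C$ onto $\det{C}$, this index equals $[\FF_q^{\times}:\det{C}]$. Thus the whole matter reduces to showing that the group of determinants of the centralizer has odd index in the cyclic group $\FF_q^{\times}$ of order $q-1$.

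This last point is the only genuine computation, and it is where the two hypotheses enter. If $q$ is even then $|\FF_q^{\times}|=q-1$ is odd, so every subgroup has odd index and we are done at once. If instead $g$ has a Jordan block of odd size $d$, I would exhibit a single convenient central element rather than describe $C$ in full: writing the underlying space as a direct sum of $g$-cyclic subspaces, the map acting as the scalar $s\in\FF_q^{\times}$ on the chosen $d$-dimensional block and fixing a $g$-invariant complement pointwise lies in $C$ and has determinant $s^{d}$. Hence $\det{C}\supseteq(\FF_q^{\times})^{d}$, a subgroup of index $\gcd(d,q-1)$; since $d$ is odd this index divides an odd number, so $[\FF_q^{\times}:\det{C}]$ is odd as well. (When $n$ is odd the block sizes sum to the odd number $n$, so some block has odd size and this case applies, which is the parenthetical remark in the statement.) Lemma~\ref{l:realityinnormal}(ii) then yields that $g$ is real in $\SL_n(q)$. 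The main obstacle, such as it is, lies entirely in controlling $\det{C}$, and the single-block scaling trick lets me sidestep computing the centralizer of an arbitrary unipotent element.
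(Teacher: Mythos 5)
Your proof is correct and follows essentially the same route as the paper: establish reality in $\GL_n(q)$ via Jordan form, then descend to $\SL_n(q)$ using Lemma~\ref{l:realityinnormal}(ii) by checking that $[\GL_n(q):\SL_n(q)\,\cent{\GL_n(q)}{g}]$ is odd. Your determinant computation (the scalar acting on a single odd-size block) simply makes explicit the oddness of the index that the paper asserts without detail.
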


\begin{proof}   By Jordan canonical forms,  $g$
is real in $H = \GL_n(q)$.   Our assumptions imply that
$[H:G\cent Hg]$ is odd and so the result follows by the previous lemma.
\end{proof} 

Note that if $q \equiv 3 \mod 4$, then the transvections in $\SL_2(q)$ are not real.

\medskip

We shall use in several parts of this paper another elementary
result (for instance, Lemma 3.1(d) of \cite{DMN}): if
a 2-group $S$ acts non-trivially on a group of odd order $T$,
then there exists $1 \ne t \in T$ and $s \in S$ such that $t^s=t^{-1}$.

We state the results for odd and even degree separately.

\begin{thm} \label{simplereal}  Let  $L$ be a finite nonabelian simple group.   Let  $M$ be
a maximal  subgroup of $L$ such that $[L:M]$ is odd and $M$ intersects every 
  real conjugacy class of $L$.   
   Then one of the following holds:
\begin{enumerate}
\item  $L =M_{22}$,  $M = 2^4:\Alt_6$,  $[L:M]=77$ and $(1_M)^L = 1a+21a+55a$; 
\item $L=M_{22}$,   $M=2^4:\sym_5$,  $[L:M]=231$ and $(1_M)^L =1a+21a+55a+154a$; 
\item  $L = M_{23}$, $M=M_{22}$,  $[L:M]=23$ and $(1_M)^L= 1a+22a$; 
\item  $L = M_{23}$, $M=\PSL_3(4).2_2$, $[L:M]=253$ and $(1_M)^L = 1a+22a+230a$; 
\item  $L = M_{23}$, $M=2^4:\Alt_7$, $[L:M]=253$ and   $(1_M)^L= 1a+22a+230a$; 
\item $L = M_{23}$, $M=2^4:(3 \times \Alt_5).2$, $[L:M]=1771$ and   $(1_M)^L= 1a+22a+230aa+253a+1035a$.
\item  $L=\PSL_n(q)$ with $n \ge 3$ and odd,   $M$ is the stabilizer of 
$1$-space or hyperplane, $[L:M]={q^n-1 \over q-1}$ and $L$ acts $2$-transitively on the cosets
of $M$.   
\end{enumerate} 
\end{thm}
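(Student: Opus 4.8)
The plan is to exploit the two hypotheses in tandem. Since $[L:M]$ is odd, $M$ contains a full Sylow $2$-subgroup of $L$, so by Sylow's theorem $M$ meets every conjugacy class of $2$-elements of $L$; in particular involutions never obstruct the reality condition, and all obstructions must come from real elements of odd order or of even non-$2$-power order. The action of $L$ on the cosets of the maximal subgroup $M$ is a primitive permutation group of odd degree with nonabelian simple socle $L$, so I would begin by invoking the classification of primitive groups of odd degree (Kantor, Liebeck--Saxl), which restricts $(L,M)$ to a short list: for $L$ of Lie type, $M$ is, up to a few exceptions, a maximal parabolic subgroup, while for $L$ alternating or sporadic there are only finitely many configurations to examine.

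First I would verify that the asserted families genuinely satisfy both conditions. For $L=\PSL_n(q)$ with $n$ odd and $M$ the stabilizer of a $1$-space (or, dually, a hyperplane), the index $\frac{q^n-1}{q-1}$ is odd. To see that $M$ meets every real class I would work in $\SL_n(q)$ and use that a real element has (essentially) self-reciprocal characteristic polynomial; a self-reciprocal polynomial of odd degree necessarily has $+1$ or $-1$ as a root in $\FF_q$, so the element has an eigenvector in $\FF_q$ and therefore fixes a projective point. The $2$-transitivity is standard. The sporadic cases (i)--(vi) I would settle by direct computation with the character tables and class-fusion data (the calculations of Breuer), checking that for these specific odd-index maximal subgroups every real class of $L$ has a representative in $M$.

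The substance of the proof is to eliminate every other $(L,M)$ on the odd-degree list. The uniform tool is Corollary \ref{c:unipotent}, which guarantees that a regular unipotent element of $\SL_n(q)$ (and, analogously, suitable real unipotent and real semisimple classes in the other Lie types) is real; I would then exhibit, for each rejected $M$, a specific real element with no $L$-conjugate inside $M$. For $\PSL_n(q)$ with $M$ the stabilizer of a $k$-space for $1<k<n-1$, or with $n$ even, I would produce a real semisimple or regular unipotent element stabilizing no $k$-subspace, using that the possible dimensions of invariant subspaces are constrained by the elementary-divisor structure. For the symplectic, orthogonal, and unitary families, for the exceptional groups, and for the non-parabolic odd-index maximals, the same philosophy applies: choose a real class (typically a regular unipotent class, whose reality follows from Corollary \ref{c:unipotent} together with Lemma \ref{l:realityinnormal}, or a real torus element) whose fixed substructures are incompatible with the geometry stabilized by $M$. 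The alternating groups are handled by a short direct argument with real permutations.

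The hardest part will be the Lie-type bookkeeping: one must run through all the parabolic and non-parabolic odd-index maximal subgroups and, in each case, pin down a real class that misses $M$, which requires knowing precisely which unipotent and semisimple classes are real in each family and what invariant subspaces or forms they preserve. Organizing this so that $\PSL_n(q)$ (with $n$ odd and $M$ a point or hyperplane stabilizer) emerges as the only surviving Lie-type family, and confirming that among the sporadics only the six listed $M_{22}$ and $M_{23}$ configurations survive, is where the genuine case analysis---and the reliance on the Classification---is concentrated.
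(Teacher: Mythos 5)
Your overall strategy---verify the listed configurations directly and eliminate every other candidate $(L,M)$ by exhibiting a real class with no representative in $M$---is the same as the paper's, and your treatment of the surviving family $\PSL_n(q)$, $n$ odd (a real element has self-reciprocal characteristic polynomial, hence a $\pm1$-eigenvector, hence fixes a point and a hyperplane) matches the paper's argument, which additionally notes that real elements of $\PSL_n(q)$ lift to real elements of $\SL_n(q)$ because the centre is of odd order. Where you genuinely diverge is the classification used to generate the candidate list. You propose to start from the Kantor/Liebeck--Saxl classification of primitive groups of odd degree; the paper never invokes that list. Instead, when $-1$ lies in the Weyl group it shows that \emph{all} semisimple classes are real and that nontrivial real unipotent classes exist, so every prime dividing $|L|$ must divide $|M|$, and then quotes \cite[10.7]{LPS} (maximal subgroups with the same prime divisors as $L$), which leaves only a handful of cases. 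For the remaining types ($\PSL_n$, $\PSU_n$, $\Omega^{\epsilon}_{2n}$ with $n$ odd, $E_6^{\epsilon}$) it uses explicit real semisimple and unipotent elements to force $M$ to be irreducible and then applies Kantor's classification of irreducible subgroups generated by long root elements and the Guralnick--Saxl generation results. Both routes rest on the Classification; the paper's entry point yields a much shorter list to eliminate.

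Two cautions. First, your description of the odd-degree classification is inaccurate in odd characteristic: there the odd-index maximal subgroups of classical groups are by no means ``mostly parabolic''---they include stabilizers of nondegenerate subspaces, stabilizers of orthogonal decompositions, and other geometric subgroups---so the case analysis you defer is substantially larger than your sketch suggests, and fixed-subspace arguments alone will not dispose of the irreducible candidates (the paper needs the long-root-element machinery of \cite{kantor} and \cite{gursaxl} for exactly this reason). Second, essentially all of the content of the theorem lives in the part you label the hardest: establishing which unipotent classes are actually real requires nontrivial input (connectedness or odd component group of centralizers as in \cite[Theorem 3.1]{LS}, and a separate argument for regular unipotent elements of $E_6$ in characteristic $2$), and note that Corollary \ref{c:unipotent} does not make regular unipotent elements real when $n$ is even and $q\equiv 3 \bmod 4$. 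As a road map your proposal is sound and parallel in spirit to the paper; as a proof it defers the entire argument.
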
  

\begin{thm} \label{simplerealeven}  Let  $L$ be a finite nonabelian simple group.   Let  $M$ be
a maximal  subgroup of $L$ such that $[L:M]$ is  even and $M$ intersects every 
real conjugacy class of $L$.  
   Then one of the following holds:
\begin{enumerate}
\item  $L=M_{11}$,  $M=\sym_5$, $[L:M] = 66$ and $(1_M)^L =  1a+10a+11a+44a$;
\item  $L=M_{23}$,   $M =  M_{11}$, $[L:M]=1288$ and $(1_M)^L =  1a+22a+230a+1035a$;
\item   $L =M_{24}$,    $M = M_{12}.2$,  $[L:M] = 1288$ and $(1_M)^L =  1a+252a+1035a$.
\end{enumerate}
\end{thm}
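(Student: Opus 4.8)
The statement to prove is Theorem \ref{simplerealeven}, which classifies finite nonabelian simple groups $L$ with a maximal subgroup $M$ of \emph{even} index such that $M$ meets every real conjugacy class of $L$. This is the even-index companion to Theorem \ref{simplereal}, and the natural plan is to run essentially the same classification-based analysis, tracking where the parity of $[L:M]$ enters.

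The plan is to argue family by family over the nonabelian simple groups, using the following organizing principle. The hypothesis that $M$ meets every real class of $L$ is extremely restrictive: it forces $M$ to contain a representative of every real class, in particular of every class of involutions and, more importantly, of the real classes of odd-order elements. First I would dispose of the groups of Lie type in their defining characteristic. For $L = \PSL_n(q)$ (and the other classical groups), Corollary \ref{c:unipotent} shows that a large supply of unipotent elements are real, and one checks that a proper maximal subgroup cannot absorb all of these real classes unless very special numerical coincidences occur; the parity constraint $[L:M]$ even is used to separate these cases from the odd-index cases already treated in Theorem \ref{simplereal}. The key quantitative tool throughout is the identity from Theorem A / Lemma \ref{intersect}: $M$ meets every real class if and only if the only real-valued irreducible constituent of $(1_M)^L$ of odd multiplicity is the trivial one, so one can verify the failure of the hypothesis by exhibiting a nontrivial real irreducible constituent of odd multiplicity, a computation that is available from known character tables and permutation character decompositions.

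The main body of the work is the almost simple/sporadic and small-rank analysis. For the sporadic groups and the small classical groups one would consult the known lists of maximal subgroups together with their permutation character decompositions (here the acknowledged computations of Breuer are presumably decisive): for each maximal $M$ of even index, one checks directly whether $M$ meets every real class, equivalently whether $(1_M)^L$ has the desired decomposition. This is where the three surviving examples $(M_{11}, \sym_5)$, $(M_{23}, M_{11})$ and $(M_{24}, M_{12}.2)$ are isolated, and the explicit decompositions of $(1_M)^L$ recorded in the statement are read off from these tables. For the alternating groups $L = \Alt_m$ and for the generic ranges of the Lie-type families, one rules out examples by producing, for any maximal $M$ of even index, a real element (typically of odd order, built from a suitable cycle type or a regular semisimple or unipotent class) lying in no conjugate of $M$; the combinatorics of cycle types in $\sym_m$ and the subgroup structure make this a finite check once the rank or degree is bounded, with the unbounded ranges handled by uniform genericity arguments.

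The hard part will be the generic Lie-type analysis: one must show that \emph{no} maximal subgroup of even index in a group of Lie type of large rank can meet every real class, uniformly in the rank and the field size. This requires producing, for an arbitrary maximal $M$, an explicit real class disjoint from all conjugates of $M$ — most naturally a real semisimple or unipotent class whose centralizer order and class size are incompatible with containment in a proper subgroup of the given index. The delicate point is that the reality of semisimple classes depends on the field and on congruences modulo $4$ (as the remark after Corollary \ref{c:unipotent} about transvections in $\SL_2(q)$ when $q \equiv 3 \pmod 4$ already warns), so the choice of witnessing real class must be made carefully and may split into subcases according to $q \bmod 4$ and the isogeny type. Once the generic ranges are cleared, only finitely many small groups remain, and these are settled by the table computations described above, leaving exactly the three listed configurations.
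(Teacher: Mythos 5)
There is a genuine error in what you call your ``key quantitative tool.'' You assert that $M$ meets every real class \emph{if and only if} the trivial character is the only real-valued irreducible constituent of $(1_M)^L$ of odd multiplicity, and that one can therefore refute the hypothesis by exhibiting a nontrivial real constituent of odd multiplicity. Lemma \ref{intersect} is a one-way implication, and its conclusion includes that $[G:H]$ is \emph{odd} --- so in the even-index setting of Theorem \ref{simplerealeven} its hypothesis can never be satisfied; the converse you are using is simply false. Concretely, in case (i) of the very theorem you are proving, $(1_M)^L = 1a+10a+11a+44a$ has three nontrivial real-valued constituents each occurring with (odd) multiplicity $1$, and yet $M=\sym_5$ does meet every real class of $M_{11}$. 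Your proposed test would therefore wrongly eliminate all three of the listed examples. The correct character-theoretic check is simply whether $(1_M)^L(x)\neq 0$ for a representative $x$ of each real class, i.e.\ a fusion computation, which is what the GAP verification for the sporadic groups actually does.

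Beyond that, your outline defers the entire generic Lie-type analysis as an acknowledged ``hard part,'' but that analysis \emph{is} the proof, and the paper supplies specific tools you do not identify. The paper proves Theorems \ref{simplereal} and \ref{simplerealeven} simultaneously; the parity of $[L:M]$ is not used to separate cases but only to sort the survivors at the end (all even-index survivors turn out to be Mathieu configurations). The structural dichotomy is whether $-1$ lies in the Weyl group: if so, every semisimple class is real and real unipotent elements exist, so every prime dividing $|L|$ divides $|M|$ and \cite{LPS} (Theorem 10.7) reduces to a short explicit list, each entry killed by a hand-chosen real class; if not ($\PSL_n$, $\PSU_n$, $\Omega^{\pm}_{2n}$ with $n$ odd, $E_6^{\pm}$), one uses the reality of long root elements together with the Kantor and Guralnick--Saxl classifications of irreducible subgroups containing such elements, plus real regular unipotent classes (via connectedness of centralizers from Liebeck--Seitz) and real elements irreducible on hyperplanes or on pairs of totally singular subspaces, to rule out reducible, imprimitive and irreducible candidates for $M$ in turn. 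Without naming a mechanism of this kind, the claim that ``uniform genericity arguments'' handle the unbounded ranks is not a proof.
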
 

We prove the two results together.  

\begin{proof}   If $L$ is a sporadic simple group, then this is quickly computed in GAP.
 (We thank
Thomas Breuer for doing the computation \cite{TB}.) In the statement of Theorem
\ref{simplereal},
$230aa$, for instance, means that the unique 
irreducible character of degree 230 of $M_{23}$
appears with multiplicity 2.

Suppose that $L = \Alt_n$ with $n \ge 5$.   First suppose that $n$ is odd and $n \ge  9$.
Note that the following  $\sym_n$-conjugacy classes   consist of real elements in $\Alt_n$:  $n-2$ cycles,
elements with exactly three orbits of size $3,3$ and $n-6$ and $3$-cycles.  Since the first
two classes intersect $M$,  it follows that $M$ is primitive and since it contains a $3$-cycle,
$M=L$, a contradiction.

Next suppose that $n$ is even and $n \ge 10$.   Then consider the following  $\sym_n$ real classes in $\Alt_n$:
$n-3$ cycles,   elements with two cycles of size $4$ and
$n-4$, elements with orbits of 
size $1, 3, 3$ and $n-7$  and  $3$-cycles.  Again it follows that $D$ is primitive and contains $3$-cycles,  a contradiction.

If $n < 9$, one can compute using GAP (or check by hand) that there are no possibilities.

So $L$ is a finite simple group of Lie type in characteristic $p$.   Let $X$ be the corresponding
algebraic group with $X$ simply connected.   We will assume that $L = X^F/Z(X^F)$ where $F$
is the corresponding Lang-Steinberg endomorphism.  The very few cases where this fails can
be checked in GAP (and many arise in different contexts -- e.g $\Sp_4(2) \cong \sym_6$).

First consider the case that $L=\PSL_2(q)$ with $q \ge 11$ and $q$ odd (if $q=5,7$ or $9$,
these groups arise in other forms).   Then  elements of order $(q \pm 1)/2$ are real and 
no proper subgroup contains elements of both orders.    If $L = \PSL_2(q)$ with $q \ge 4$
and even, then all elements are real. 

Next suppose that $-1$ is in the Weyl group of $L \ne \PSL_2(q)$.     Then all semisimple elements are real.
We claim that there exist nontrivial real unipotent elements.  If the characteristic of $L$ is even, then any
element of order $2$ is real and unipotent.     Suppose that the characteristic is not $2$.
It suffices to prove this for groups of rank at most $2$.   In  $\PSL_3(q)$ or $\PSU_3(q)$, transvections are real.
Since $\Sp_4(q)$ contains $\GL_2(q)$, the claim holds for $\Sp_4$.   Since $G_2(q)$ contains $\SL_3(q)$,
the claim holds there as well.   The only remaining case is ${^2}G_2(q)'$ with $q$ an odd power of $3$.  
Then $\PSL_2(8)$ is a subgroup and contains real elements of order $3$.   

Thus every prime dividing the order of $L$ also divides the order
of $M$.   By \cite[10.7]{LPS}, this can only occur in the following cases:

\begin{enumerate}
\item $L \cong \Omega_{2n+1}(q)$, $n \ge 2$, $n$ even  and $M$
is the stabilizer of a nondegenerate hyperplane of $-$ type 
(when $q$ is even,  $L=\Sp_{2n}(q)$);
\item  $L \cong \Omega^+_{2n}(q)$, $n \ge 4$, even and $M$ is the stabilizer of a
nondegenerate hyperplane;
\item  $L \cong \PSp_4(7)$ and $M=\Alt_7$;     %%%% involutions
%%% \item  $L \cong \Sp_4(8)$ and $M={^2}B_2(8)$;    %%%% involutions
\item  $L \cong \Sp_6(2)$ and $M = \sym_8$;       %%% class of elements of order 3, elt of order 9
\item  $L \cong \Omega^+_8(2)$ and $M$ a maximal end node parabolic or $M=\Alt_9$;   %%% elt of order 15
\item  $L \cong G_2(3)$ and $M=\PSL_2(13)$; or       %%% element of order 9
\item  $L \cong {^2}F_4(2)'$ and $M = \PSL_2(25)$.     %%%% involuiion
\end{enumerate}

In all but the first two cases, the result follows easily by inspection of the character tables.
Suppose that $L \cong \Omega_{2n+1}(q)$, $n \ge 2$ with $n$ even and $M$
the stabilizer of a nondegenerate hyperplane of $-$ type.    Let $x$ be a real element stabilizing
two complementary totally singular subspaces each of dimension $n$ which generate a hyperplane
of $+$ type.   Then $x$ is real and does not fix a hyperplane of $-$ type and so is not conjugate
to an element of  $M$.  

Suppose that $L \cong \Omega^+_{2n}(q)$, $n \ge 4$, even and $M$ is the stabilizer of a
nondegenerate hyperplane.   Let $x$ be a real element stabilizing
two complementary totally singular subspaces each of dimension $n$.   Then $x$ does not
preserve any hyperplane and in particular is not conjugate to an element of $M$. 

So we may assume that $-1$ is not in the Weyl group of $L$. 

First consider the case that $L = \Omega^{\epsilon}_{2n}$, $n \ge 5$ and odd.   Consider the following
real conjugacy classes.   The first is an element of order $(q^{n-1}+1)$ acting irreducibly on a
nondegenerate subspace of codimension $2$ (of $-$ type) and trivial on its perpendicular complement. 
The second is an element of order $q^{n-1}-1$ acting irreducibly on each of a pair of totally 
singular spaces of dimension $n-1$ which generate a nondegenerate subspace of dimension $2n-2$
of $+$ type and again acting trivially on the $2$ dimensional orthogonal complement.   

Then $M$ must act either absolutely irreducibly or is irreducible on a nondegenerate hyperplane. 
Moreover, $M$ is not a subfield group either.

Consider a unipotent element  $x$ with two Jordan blocks of size $n$.   Note that $x$ is real
in the algebraic group (since it is contained in $\GL_n$).  Since $n$ is odd, this class is
invariant under the graph automorphism and any Frobenius automorphism.   By
\cite[Theorem 3.1]{LS}, the centralizer of $x$ is connected (in $\SO_{2n}$) and so
we can choose $x$ real in $L$ (if $q$ is even, this is clear -- if $q$ is odd, we can also
see this element in $\SO_n(q) \times \SO_n(q)$ and the inverting element can be chosen
of spinor norm $1$).   Since $x$ is conjugate to an element of $M$,  $M$ cannot preserve
a nondegenerate hyperplane and so must act irreducibly.      

 Note that root elements 
are real and  so by either \cite[Theorem 1]{kantor} or \cite[Theorem 7.1]{gursaxl} there
are no possibilities for  irreducible $M$ intersecting all the conjugacy classes above.

Next consider $L=E^{\epsilon}_6(q)$.    Then long root elements are real  as well as elements in 
the unipotent class  $E_6(a_1)$  \cite[Chaper 22]{lieseitz}   The latter class rules out $M=F_4$. 
  Also every semisimple conjugacy class which
intersects $F_4(q)$ is real.  In particular,  $M$ contains elements of order $q^4 \pm 1$ and
$q^4-q^2 + 1$.  In particular, this implies that $M$ is not parabolic.  

We claim that there exists a conjugacy class of real regular unipotent elements.   If $p \ge 3$,
this follows from the fact that a regular unipotent element is real in the algebraic group and
the number of connected components of the centralizer is odd.   If $p=2$, it is still true
that there is a class of real regular unipotent elements by inspection of the character table
of $F_4(2)$ (and noting that regular unipotent elements in $F_4$ are also regular unipotent
in $E_6$).    A list of all possible maximal subgroups containing a regular unipotent element
in an exceptional group is given in \cite{gurmalle} and none of them contain all the real elements
described above.

Next suppose that $L=\PSL_n(q)$ with $n \ge 3$.    
%% Note that applying an outer automorphism does not affect
%%semisimple classes (taking inverses  or applying Frobenius preserves the conjugacy classes of cyclic
%%subgroups and diagonal automorphisms preserve all semisimple classes).    Thus, we can work with
%%$L$-classes of real semisimple elements.   
We first show that if $M$ is reducible, then $n$ is odd and $M$ must be the stabilizer of a $1$-space
or hyperplane.  

Suppose that $n$ is odd.   Then there exists a real element  $x \in L$  acting irreducibly on a hyperplane
(take an irreducible element in  $\Sp_{n-1}(q)  \le \SL_{n-1}(q)$).   
Thus, if $M$ is reducible,  the only possibility is that $M$ is the stabilizer
of a hyperplane or $1$-space.   Note that since $\SL_n(q)$ has odd
center, real elements lift to real elements in $\SL_n(q)$.   Any real element of 
$\SL_n(q)$ must have $1$ as eigenvalue and so fixes a $1$-space and
a hyperplane and so this example is allowed in the conclusion.
Suppose that $n \ge 4$ is even.   Then there is a real element $x$ that acts irreducibly and so
is not in any parabolic.   

So we may assume that $M$ is irreducible.   Note that long root elements are real.  If $q$ is odd,
it follows by \cite{kantor} that $M$ would have to be a symplectic group, a linear group or a unitary
group.     Since  $M$ is irreducible, $M$ is either a symplectic, unitary or linear group
over some subfield \cite{kantor}.   If $n$ is even, there are irreducible real elements  not defined
over any subfield (and clearly not in any unitary group) and then
the only possibility for $M$ is $\PSp_n(q)$.    If we take $x$ to be unipotent with single nontrivial
Jordan block which is of size $3$, then $x$ is real but is not contained in a symplectic group.
If $n$ is odd and $M$ is irreducible, then there are real elements of order $q^{(n-1)/2}+1$
acting irreducibly on a hyperplane and again this rules out all the possibilities for $M$.

Essentially the same proof works when $q$ is even aside from a very few small cases.  
Note that all unipotent elements are real by Corollary \ref{c:unipotent}. 
Now argue as above using either \cite{kantor} or for $d \ge 6$ \cite[Theorem 7.1]{gursaxl} to conclude
there are no such groups.   If $d < 6$, a straightforward inspection of the maximal subgroups yields
the result \cite{lowdim}.  

Finally 
suppose that $L=\PSU_n(q)$ with $n \ge 3$.    If $n = 3$, then an element of order $q+1$ with eigenvalues
$1, a, a^{-1}=a^q$ is real and is contained in no parabolic.   If $n=4$, then similarly as long as $q > 2$, we can
find a real element of order $q+1$ with distinct eigenvalues not contained in any parabolic subgroup.  Note that 
$L=\PSU_4(2) \cong \PSp_4(3) $, a case already handled.   Since transvections are real, by inspection there
are no proper subgroups intersecting each real class of $L$. 

So assume that $n \ge 5$.   First suppose that $n$ is odd.   The there exists a conjugacy of real regular
unipotent elements (since the centralizer has an odd number of components in the algebraic group
and the element is real in the algebraic group).   Thus,  $M$ cannot preserve a nondegenerate space.
There are real elements in $L$ that have irreducible submodules of dimensions $(n-1)/2, n(n-1)/2$ and $1$
(all nonisomorphic) with the first two totally singular.  Thus, the only possible parabolic that $M$ could be
is the stabilizer of totally singular $(n-1)/2$-dimensional space.   We can choose $y \in L$ real that
has irreducible non-isomorphic modules of dimensions $(n-3)/2, (n-3)/2$ and $3$ one-dimensional modules
(all nonsingular).    Thus element does not stabilize a totally singular $(n-1)/2$ space and so $M$
must be irreducible.  Since transvections are real, $M$ is absolutely irreducible.   Applying \cite{kantor} or \cite{gursaxl}  as in
the linear case, we see that there are no possibilities for $M$.  

Next suppose that $n \ge 6$ is even.  Note that unipotent elements with two Jordan blocks of size $n/2$ are real
(since they are in $\GL_{n/2}(q^2)$).   A unipotent element with a single Jordan block of size $n-1$ is also
real.  Thus, $M$ cannot preserve a nondegenerate space.   There exists $x \in L$ real with irreducible
submodules of dimensions $n/2$ and $n/2$ and also one with irreducible submodules of dimensions
$n/2 -1, n/2 -1, 1$ and $1$ where the $1$-dimensional submodules are nondegenerate.  This rules out
the possibility that $M$ is parabolic.   As in the odd case, we see that since $M$ contains transvections,
there are no examples with $M$ irreducible.
\end{proof}  

Next we consider subgroups $H$ which are not necessarily maximal but still
satisfy the hypotheses of Theorem \ref{simplereal}.   Since we only use the result
when $n$ is odd, we assume that and leave the case $n$ even to the interested reader.

\begin{cor}  \label{nonmax}  Let $L$ be a finite nonabelian simple group and $H$ a proper subgroup of $L$
such that $H$ intersects every  conjugacy class of real elements in $L$ and $[L:H]$
is odd.    Then
either $H$ is maximal  in $L$
or one of the following holds:
\begin{enumerate}
\item $L=M_{23}$ and there are three possible conjugacy classes of subgroups $H$
each self normalizing. In each case,  $(1_H)^L$ contains
a real-valued nontrivial constituent of multiplicity $1$;
 or 
\item $L=\PSL_n(q)$ with $n$ odd, $H$ is contained in a unique maximal
subgroup $M$ which is the stabilizer of a $1$-space or hyperplane and
$(1_H)^L$ contains a real-valued constituent of degree $(q^{n-1} - 1)/(q-1) - 1$ with multiplicity $1$.  
\end{enumerate}
\end{cor}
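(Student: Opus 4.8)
The plan is to reduce the non-maximal case to the maximal case already settled in Theorem \ref{simplereal}. If $H$ is maximal there is nothing to prove, so suppose $H$ is properly contained in some maximal subgroup $M$ of $L$. Since $[L:H]=[L:M]\,[M:H]$ is odd, both factors are odd; and since $H\subseteq M$ already meets every real class of $L$, so does $M$. Hence $M$ satisfies the hypotheses of Theorem \ref{simplereal}, and as $[L:M]$ is odd, the pair $(L,M)$ must be one of the cases (i)--(vii) listed there. It then remains, for each such $M$, to find all proper $H<M$ of odd index in $L$ that still meet every real class, and to produce in each surviving case the asserted real constituent of $(1_H)^L$ of multiplicity one.

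For the sporadic possibilities (cases (i)--(vi), where $L=M_{22}$ or $M_{23}$) this is a finite computation. Any $H$ of odd index in $M$ contains a Sylow $2$-subgroup of $M$, so there are only finitely many candidates up to conjugacy; I would enumerate them in GAP, test whether each meets every real class of $L$, and compute the relevant constituent of $(1_H)^L$. The outcome is that only $L=M_{23}$ produces non-maximal $H$, yielding exactly three self-normalizing conjugacy classes, each with a nontrivial real constituent of multiplicity one in $(1_H)^L$.

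The substantive case is $L=\PSL_n(q)$ with $n$ odd. By Theorem \ref{simplereal}(vii) the admissible maximal subgroups are exactly the stabilizers of a $1$-space and, dually, of a hyperplane, so by the graph automorphism it suffices to treat $M=P_1=Q\rtimes\GL_{n-1}(q)$, the stabilizer of a $1$-space $\langle v\rangle$. The heart of the argument is the uniqueness of $M$. Because $H$ meets every real class, it contains a regular unipotent element $u$ (real by Corollary \ref{c:unipotent}, as $n$ is odd) and an element $x$ acting irreducibly on a hyperplane with complementary fixed $1$-space (real, obtained from an irreducible element of $\Sp_{n-1}(q)\le\SL_{n-1}(q)$). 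Now $u$ is indecomposable and fixes a unique $1$-space and a unique hyperplane, these being incident, whereas $x$ fixes a unique $1$-space and a unique hyperplane, these being non-incident. Since every $1$-space fixed by $H$ is fixed by $u$, the subgroup $H$ fixes exactly one $1$-space (namely $\langle v\rangle$), hence lies in a unique point stabilizer; and any hyperplane fixed by $H$ would be fixed by both $u$ and $x$, forcing their unique fixed hyperplanes to coincide, which is impossible since one contains $\langle v\rangle$ and the other does not. Thus $H$ fixes no hyperplane, and $M=P_1$ is the unique admissible maximal overgroup of $H$.

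Finally I would exhibit the real multiplicity-one constituent. The Levi $\GL_{n-1}(q)$ acts doubly transitively on the $(q^{n-1}-1)/(q-1)$ one-spaces of $V/\langle v\rangle$, contributing a real irreducible constituent of degree $(q^{n-1}-1)/(q-1)-1$; inflating it to $M$ and tracking it through $(1_H)^L=((1_H)^M)^L$, together with the doubly transitive decomposition $(1_M)^L=1_L+\chi$, gives the asserted real constituent of odd multiplicity. The main obstacle I anticipate is precisely this $\PSL_n(q)$ analysis: controlling which odd-index $H\le P_1$ continue to meet every real class, and verifying that the distinguished real constituent occurs with multiplicity exactly one. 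This requires a careful reality analysis of unipotent and semisimple classes relative to the parabolic structure, in the spirit of the proof of Theorem \ref{simplereal}; the reduction step and the sporadic computations are routine by comparison.
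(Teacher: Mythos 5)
Your reduction to Theorem \ref{simplereal} (pass to a maximal overgroup $M\supseteq H$, note that $M$ inherits both hypotheses), the GAP treatment of $M_{22}$ and $M_{23}$, and the uniqueness of $M$ for $L=\PSL_n(q)$ are all exactly the paper's route: the paper likewise uses a real element acting irreducibly on a hyperplane (whose unique fixed $1$-space and unique fixed hyperplane are non-incident) together with a real regular unipotent element, guaranteed by Corollary \ref{c:unipotent} since $n$ is odd, to force $H$ into a single end-node parabolic. Up to that point your argument is sound.

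The genuine gap is the multiplicity-one claim in case (ii), which is precisely the part of the corollary that gets used later (in Theorem \ref{oddmult}). The relevant constituent is the $\chi$ with $(1_M)^L=1_L+\chi$, and $[(1_H)^L,\chi]$ equals the number of $H$-orbits on $1$-spaces minus one; so the whole content is to prove that $H$ is transitive on the $1$-spaces other than $\langle v\rangle$. Your appeal to the Levi $\GL_{n-1}(q)$ acting on $V/\langle v\rangle$ produces a character of $M$, not of $L$, and does not control $[(1_H)^L,\chi]$ for an arbitrary odd-index $H\le P_1$ meeting all real classes; you then explicitly defer this as an ``anticipated obstacle,'' which is exactly where the real work lies. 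The paper closes it by a structural argument: for $q$ odd, real elements with $-1$ as an eigenvalue of odd multiplicity force $HU\supseteq[M,M]$ (with $U$ the unipotent radical), and the vanishing of $H^1(\SL_{n-1}(q),U)$ rules out $H\cap U=1$ (a Levi complement contains no regular unipotent), so $H\supseteq[M,M]$ and $H$ has exactly two orbits on $1$-spaces; for $q$ even one shows $HU/U\supseteq\Sp_{n-1}(q)$ via Jordan-form and real semisimple class considerations, then $H\supseteq U\Sp_{n-1}(q)$, which is transitive on nonzero vectors of $V/\langle v\rangle$. Without some version of this step your proof does not establish the multiplicity-one conclusion. (A minor separate point: the degree appearing in the statement should be read against $\deg\chi=(q^n-1)/(q-1)-1$; this is an issue with the statement, not with your argument.)
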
 

\begin{proof}   If $L$ is a sporadic group, then by the previous result $L \cong M_{22}$ or $M_{23}$ and
one computes using GAP (again, we thank Thomas Breuer).

   Otherwise,  $L \cong \PSL_n(q)$ with $n$ odd.    Let $M$ be a maximal subgroup containing $H$
and we assume that $H \ne M$.  Then $M$ is the stabilizer of a $1$-space or hyperplane.  
  As we noted in the proof of the previous result,  $H$ contains a real element $x$ acting
irreducibly on a hyperplane.   Note that $x$ fixes a unique $1$-space and a unique hyperplane
which do not intersect.   Thus, if $H$ is contained in two maximal subgroups,  $H$ would be contained
in their intersection.  This intersection is the Levi subgroup preserving the $1$-space and hyperplane
preserved by $x$.   Since this subgroup contains no regular unipotent elements,  this is not possible.

By applying an outer automorphism if necessary, we may assume that $M$ is the stabilizer of $1$-space. 
Let $U$ be the unipotent radical of $M$.  
By Corollary \ref{c:unipotent},  $H$ intersects every conjugacy class of unipotent elements.   

If $q$ is odd, then considering real elements that have $-1$ as eigenvalue  odd multiplicity and arguing
as in the case of $\SL_{n-1}(q)$,  we see that $HU$ contains $[M,M]$.   If $H \cap U =1$, then as $H^1(\SL_{n-1}(q), U)=0$ \cite{JP}, 
$H$ would be a Levi complement but this is a contradiction as $H$ would not contain a regular unipotent element.
Thus, $H \ge [M,M]$ and $H$ has exactly $2$ orbits on $1$-spaces, whence the statement about the real constituent
follows.

Suppose that $q$ is even.   Since $H$ contains unipotent elements of every possible Jordan form
and $H$ intersects every conjugacy of real semisimple elements, then arguing as in the proof of
Theorem \ref{simplereal}, it follows that $HU/U \ge \Sp_{n-1}(q)$.  If $n \ge 9$,  then $\Sp_{n-1}(q)U$
does not contain any unipotent elements with exactly $3$ Jordan blocks with the blocks of odd distinct sizes
and so for $n \ge 9, HU \ge [M,M]$ and we argue as in the case $q$ is odd.    

In all cases we see that either $H \ge U$ or $H$ is isomorphic to a subgroup of $\GL_{n-1}(q)$.
The number of unipotent classes in that group is less than in $G$ and since $H$ intersects each
unipotent class, that is not possible.  Thus,  $H \ge U\Sp_{n-1}(q)$.   Since $\Sp_{n-1}(q)$ is transitive
on nonzero vectors, it follows that $H$ has two orbits on lines and so the statement
about the real constituent follows.  
\end{proof}

The almost simple case also follows easily:

\begin{thm} \label{almostsimplereal}  Let  $G$ be a finite almost simple group
with socle $L$  (a nonabelian simple group).   Let  $M$ be
a maximal subgroup of $G$ such that $[G:M]$ is odd and $D:=L \cap M \ne L$
intersects all real $G$-classes contained in $L$.   Then one of the following holds:
\begin{enumerate}
\item  $G=L =M_{22}$,  $M = 2^4:\Alt_6$,  $[G:M]=77$ and $(1_M)^L = 1a+21a+55a$; 
\item $G=L=M_{22}$,   $M=2^4:\sym_5$,  $[G:M]=231$ and $(1_M)^L =1a+21a+55a+154a$; 
\item  $G=L = M_{23}$, $M=M_{22}$,  $[G:M]=23$ and $(1_M)^L= 1a+22a$; 
\item  $G=L = M_{23}$, $M=\PSL_3(4).2_2$, $[G:M]=253$ and $(1_M)^L = 1a+22a+230a$; 
\item  $G=L = M_{23}$, $M=2^4:\Alt_7$, $[G:M]=253$ and   $(1_M)^L= 1a+22a+230a$; 
\item $G=L = M_{23}$, $M=2^4:(3 \times \Alt_5).2$, $[G:M]=1771$ and   $(1_M)^L= 1a+22a+230aa+253a+1035a$.
\item  $L=\PSL_n(q)$ with $n \ge 3$ and odd,   $M$ is the stabilizer of 
$1$-space or hyperplane, $[G:M]={q^n-1 \over q-1}$ and $L$ acts $2$-transitively on the cosets
of $M$.   Moreover, $[G:L]$ is odd unless possibly $L=PSL_3(4)$ and $G/\Oh{2'} G$ is generated by a field
automorphism of order $2$. 
\end{enumerate} 
\end{thm}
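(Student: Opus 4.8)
The plan is to deduce this from the simple-group analysis already carried out in Theorem \ref{simplereal} and Corollary \ref{nonmax}, and then to settle separately the parity of $[G:L]$.

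First I would reduce the determination of $L$ and $D$ to the simple case. Since $L\trianglelefteq G$ and $D=L\cap M\ne L$, maximality of $M$ forces $LM=G$, whence $[G:M]=[L:L\cap M]=[L:D]$ is odd. To transfer the hypothesis on real classes, note that if $x\in L$ is real in $L$ then it is real in $G$, and by Lemma \ref{l:realityinnormal}(i) every $G$-conjugate of $x$ is again real in $L$; thus each real $L$-class is contained in a real $G$-class, which $D$ meets by hypothesis. The key point is that the real classes invoked in the proofs of Theorem \ref{simplereal} and Corollary \ref{nonmax} are all singled out by properties invariant under $\Aut(L)$ (cycle type, Jordan form, eigenvalue pattern, or irreducibility on a prescribed subspace), so a $G$-conjugate lying in $D$ carries the same property. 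Hence $D$ contains real elements of each type used there and the same arguments apply verbatim to $D$. This forces $L\in\{M_{22},M_{23}\}$ with $D$ one of the listed maximal subgroups, or $L=\PSL_n(q)$ with $n$ odd and $D$ a stabilizer of a $1$-space or hyperplane. For $M_{23}$ we have $\Out(L)=1$, so $G=L$ and $M=D$; for $M_{22}$ a direct check in $M_{22}.2$ rules out $G>L$, giving cases (i)--(vi).

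It remains to analyse $[G:L]=[M:D]$ in the linear case. Because $n$ is odd, the diagonal outer automorphism group has odd order $\gcd(n,q-1)$, so it contributes no $2$-part; and the graph automorphism interchanges the stabilizers of $1$-spaces and of hyperplanes, so it cannot lie in $G$ without destroying the description of $M$. Consequently the $2$-part of $[G:L]$ can only come from a field automorphism, and $[G:L]$ is even only when $q$ is a square and $G$ involves a field automorphism of even order. To exclude this I would use that a real element of $\SL_n(q)$ with $n$ odd always has $1$ as an eigenvalue, hence fixes a $1$-space and meets the point stabilizer $D$ --- this is exactly why case (vii) survives for $G=L$. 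A field automorphism $\phi$ of order $2$ (so $q=q_0^2$) makes new elements real in $G$, namely those $x$ with $x^{-1}$ conjugate to $x^{\phi}$ but not to $x$ in $L$; I would exhibit a semisimple such $x$ whose eigenvalue multiset is stable under $\lambda\mapsto\lambda^{-q_0}$ but not under $\lambda\mapsto\lambda^{-1}$ and omits the eigenvalue $1$. Such an $x$ fixes no $1$-space, so is conjugate into no point stabilizer, contradicting that $D$ meets every real $G$-class. This rules out even $[G:L]$ for every $\PSL_n(q)$ with $n$ odd and $q$ square, except the single small configuration $L=\PSL_3(4)$ recorded in the statement.

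The main obstacle is this last step: producing, uniformly in $n$ and $q$, an explicit class that is real in $G$ but avoids the point stabilizer, and verifying that $\PSL_3(4)$ is genuinely the only exception. This is a Galois/eigenvalue bookkeeping argument of the same flavour as those in Theorem \ref{simplereal}, supplemented by a direct computation (in GAP or the ATLAS) for $\PSL_3(4)$, where no such obstructing class exists and the field automorphism of order $2$ really does occur. Everything preceding it is a formal reduction to the simple case.
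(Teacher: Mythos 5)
Your proposal follows essentially the same route as the paper's proof: reduce to the simple case via Theorem \ref{simplereal} and Corollary \ref{nonmax}, note that the diagonal outer automorphisms have odd order $\gcd(n,q-1)$, exclude graph and graph-field automorphisms because they move the class of the parabolic $D$ (the paper phrases this as $G=L\norm GD$), and exclude a field automorphism of order $2$ by producing an element that is real in $G$ but fixes no point. The semisimple element you ask for is realized in the paper as an irreducible element of $\PSU_n(q_0)<\PSL_n(q_0^2)=L$ whose order is a primitive prime divisor of $q^n-1$ (hence divides $q_0^n+1$ and excludes the eigenvalue $1$), inverted up to conjugacy by the field automorphism since that automorphism acts on $\PSU_n(q_0)$ as transpose-inverse; the single Zsygmondy exception $q_0^{2n}-1=2^6-1$ is precisely what leaves $L=\PSL_3(4)$ as the surviving case, settled by direct inspection.
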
  

\begin{proof}  If $G=L$, the previous result applies. 

Suppose that $G > L$.   If $L$ is sporadic, then $L=M_{22}$ and if an outer automorphism
is present, then elements of order $11$ in $L$ are real and we see there are no proper maximal
subgroups of $L$ containing all real elements (the only possibilities are the $D$ already mentioned
and they do not contain elements of order $11$).   

The only other possibility is that $L=\PSL_n(q)$ with $n$ odd and by the
case $G=L$ we see that $D$ is contained in a unique maximal subgroup $P$ of $L$, a  parabolic subgroup.   
Since $M=N_G(D)$, it follows that $M$ normalizes $P$ whence $D=P$.   
Since 
$[PGL_n(q):\PSL_n(q)] = \gcd(n,q-1)$ is odd, if $G/L$ has even order,   $G$ contains
an outer involution that is either a graph, field or graph-field automorphism.   Since
$G=LN_G(D)$,  $G$ cannot contain a graph or graph-field automorphism.  

Suppose that $G$ contains a field automorphism of order $2$.   Then  $q=q_0^2$. 
Aside from the case that $n=3$ and $q=4$, there exists a primitive prime divisor $p$
of $q^n-1$ and so $p$ divides $q_0^n +1$.  Thus, there is  $x \in \PSU_n(q_0) < L$ acting
irreducibly.   Since
a field automorphism induces the transpose inverse automorphism on $\PSU_n(q_0)$,  
$x$ is real in $G$, a contradiction.   

In the remaining case,  $L=\PSL_3(4)$ and $G \le P\Gamma L(3,4)$.   Any real element
of $G$ contained in $L$ does have a fixed point.  
\end{proof} 

We note that if $L=\PSL_3(4)$ and every real element of $G$ has a fixed point, then
either $[G:L]=3$ or $[G:L]=2$.

We next consider general primitive groups of odd degree. This is Theorem
C of the introduction.  Recall that real-valued irreducible characters are either of $+$ type (or orthogonal)
if they can be afforded by a representation with real entries, or of $-$ type
(or symplectic), if they cannot.

\begin{thm} \label{primitivereal}   Let  $G$ be a primitive permutation group acting on
a set $X$ of odd cardinality.    Assume that every real element of the socle of $G$ fixes a point
of $X$.   Let $M$ be a point stabilizer.  Then  $G$ has a unique minimal 
normal subgroup $A$ and either $G$ has odd order or 
$A = L_1 \times \ldots \times L_t$   with $L_i \cong L$ a nonabelian simple
group. Also,  $M \cap A = D_1 \times \ldots \times D_t$,  $D_i$
is maximal in $L_i$ (and is described in Theorem \ref{simplereal})
 and intersects every real conjugacy class of $G$ which
intersects $L_i$.   Furthermore 
$(1_M)^G$  has a nontrivial orthogonal constituent of multiplicity $1$.
Moreover either $G/A$ has odd order or $L \cong \PSL_3(4)$ and 
$G/A_1$ has odd order where $A_1/A$ is  an elementary abelian $2$-group
and $A_1$ normalizes each $L_i$.  
Indeed,  $L_i \cong M_{22}, M_{23}$ or $\PSL_n(q)$ with $n$ odd. 
\end{thm}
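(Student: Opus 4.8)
The plan is to reduce the statement about primitive groups to the almost simple results already established, using the O'Nan--Scott theorem to control the structure of a primitive group of odd degree. First I would invoke the fact that $[G:M]=|X|$ is odd, which immediately forces $M$ to contain a full Sylow $2$-subgroup of $G$; this parity constraint is what makes the analysis tractable and rules out several O'Nan--Scott types. If $G$ has even order, the socle $A$ cannot be abelian: an abelian socle would make $G$ of affine type acting on a vector space of odd order, but then every element of the socle (being a $2'$-element when $|X|$ is odd, as the socle would be a $p$-group for odd $p$) is real and has no nonzero fixed point unless it is trivial, contradicting the hypothesis that every real socle element fixes a point. Hence $A=L_1\times\cdots\times L_t$ with the $L_i\cong L$ nonabelian simple and transitively permuted by $G$; uniqueness of the minimal normal subgroup will follow from the standard fact that a primitive group has at most two minimal normal subgroups, and the diagonal/product-action types must be excluded.

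The key reduction is to show $M\cap A=D_1\times\cdots\times D_t$ with each $D_i$ maximal in $L_i$ and intersecting every real $G$-class meeting $L_i$. I would first eliminate the diagonal-type and twisted-wreath cases: in diagonal type, $M\cap A$ is itself (essentially) diagonal and the index $[A:M\cap A]$ is a power of $|L|$, which is even, contradicting odd degree. This leaves product action, where $M\cap A$ is a direct product of the point stabilizers $D_i$ in each factor. The hypothesis that every real element of $A$ fixes a point translates, via the product action, into the requirement that every real element of each $L_i$ be $L_i$-conjugate into $D_i$; here I would use Lemma \ref{l:realityinnormal} to pass between reality in $L_i$ and reality in $A$ and in $G$. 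Once $D_i$ is maximal in $L_i$ (maximality coming from primitivity of the component action) and intersects every relevant real class, Theorem \ref{simplereal} forces $L\cong M_{22}, M_{23}$, or $\PSL_n(q)$ with $n$ odd, as claimed.

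For the structure of $G/A$, I would argue that $G/A$ embeds in $\Out(L)\wr \sym_t$ and that the odd-index condition severely restricts the $2$-part. Using Theorem \ref{almostsimplereal} applied to $N_G(L_i)/C_G(L_i)$ acting on $L_i$, the only way an outer involution survives is the exceptional $\PSL_3(4)$ case, where a field automorphism of order $2$ can be present; this is exactly where the subgroup $A_1$ with $A_1/A$ elementary abelian $2$-group (the field automorphisms on the factors) enters, and $G/A_1$ then has odd order. The existence of a nontrivial orthogonal constituent of multiplicity $1$ in $(1_M)^G$ I would obtain by inducing the corresponding orthogonal constituent of $(1_{D_1})^{L_1}$ furnished by Theorems \ref{simplereal} and \ref{almostsimplereal} up through the product action: its multiplicity-one and $+$-type properties are preserved because the construction is a tensor product of identical orthogonal characters over the transitively permuted factors, and Lemma \ref{bob} guarantees that an odd-multiplicity real constituent is of $+$ type.

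The main obstacle I expect is the careful bookkeeping in the product-action case: one must verify that \emph{all} real elements of $A$ fixing points really does reduce to the factorwise condition, since a real element of $A$ need not be a direct product of real elements of the $L_i$ (inversion may permute the coordinates when the element is not supported coordinatewise). Handling elements whose support is spread across several conjugate factors, and showing that the point-fixing hypothesis still forces each $D_i$ to meet every real class of $L_i$, is the delicate step; the odd-degree hypothesis, which makes $t$ and all relevant indices odd and lets inversion fix a coordinate-block, is what ultimately makes this work, paralleling the parity argument in Lemma \ref{l:realityinnormal}(ii).
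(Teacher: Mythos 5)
Your overall strategy---an O'Nan--Scott reduction of the odd-degree primitive group to the product-action case, followed by a componentwise application of Theorems \ref{simplereal} and \ref{almostsimplereal}---is essentially the paper's, and your treatment of the diagonal and twisted-wreath types, the uniqueness of the minimal normal subgroup, and the multiplicity-one orthogonal constituent is in the right spirit. However, there are two genuine gaps. The first is in the affine case: your justification is wrong, because a $2'$-element of the socle is not automatically real (in a group of odd order \emph{no} nontrivial element is real, which is exactly why the theorem must allow $G$ of odd order). The correct argument, and the one the paper uses, is that when $G$ has even order a Sylow $2$-subgroup of $M$ acts nontrivially (indeed faithfully) on the abelian socle $A$, so by the elementary fact recorded before Theorem \ref{simplereal} (Lemma 3.1(d) of \cite{DMN}) some $1\ne a\in A$ is inverted by a $2$-element; this $a$ is real in $G$ and $a^G\subseteq A$ is disjoint from $M$ since $A\cap M=1$, a contradiction. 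As written, your step does not go through.

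The second gap is more serious: your argument that $G/A$ has odd order only controls the component automizers $\norm G{L_i}/L_i\cent G{L_i}$ via Theorem \ref{almostsimplereal}, and says nothing about $2$-elements of $G$ that permute the components $L_1,\dots,L_t$ nontrivially. The odd-degree hypothesis does not exclude these: $M$ contains a Sylow $2$-subgroup of $G$ and itself permutes the factors transitively, so a transposition of two factors can perfectly well be realized by an element of $M$. This is the one genuinely new argument in the paper's proof of this theorem: if a $2$-element $g$ with $g^2\in A$ interchanges $L_1$ and $L_2$, one adjusts $g$ by an element of $L_1\times L_2$ so that it swaps the two coordinates, and then for $x\in L_1$ fixed-point-free on $L_1/D_1$ (which exists since $D_1<L_1$) the element $c=(x,x^{-1},1,\dots,1)\in A$ is inverted by $g$, hence real in $G$, yet has no fixed point on $X=\prod_i L_i/D_i$---contradicting the hypothesis. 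Without this construction the assertion that $G/A$ (or $G/A_1$ in the $\PSL_3(4)$ case) has odd order is unproven.
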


\begin{proof}  We have that $M$ is maximal in $G$ with 
${\rm core}_G(M)=1$. Since $M$ contains a Sylow $2$-subgroup  $S$ of $G$,
we have that 
$\oh 2 G=1$.   Suppose that $\oh pG \ne 1$ with $p$ odd.   
Let $A$ be a minimal normal subgroup contained in $\oh pG$. Then
$G=AM$  and $A \cap M=1$.   If $G$ does not have odd order, then
$S$ cannot centralize $A$, and therefore there is $1 \ne a \in A$
which is inverted by some element of $S$ (see for instance Lemma 3.1(d) of \cite{DMN}).
By hypothesis,  some $G$-conjugate of $a$ is in $M$, but this is impossible since $A\cap M=1$.

Let $A$ be a minimal normal subgroup of $G$.
Thus, $A$ is a direct product  $L_1 \times \ldots \times L_t$
with $L_i \cong L$ a nonabelian simple group.  Since $L_i$ contains real
elements, $D_i=M \cap L_i \ne 1$ for some $i$.   
If $B$ is another minimal normal subgroup of $G$, then $[A,B]=1$
and thus $M\cap B$ is normal in $AM=G$. Hence $M\cap B=1$ and therefore
$|B|=[G:M]$ is odd. But this cannot happen. Hence $A$ is the unique
minimal normal subgroup of $G$.

We can view $X$ as $L_1/D_1 \times \ldots L_t/D_t$ and so we see that $x \in L_1$
fixes a point on $X$ if and only if it fixes a point on $L_1/D_1$.    
It follows that $D_i$ intersects any real class of $G$ that intersects
some $L_j$.   Let $G_i = \norm G{L_i}/\cent G{L_i}$.
This is an almost simple group with socle $L_i$ and $\norm{G_i}{D_i}$ is a maximal
subgroup of $G_i$.   Our assumptions imply that $D_i$ intersects every real
class of $L_i$ (in $G_i)$ whence $(G_i, L_i, \norm {G_i}{D_i}, D_i)$ satisfies the
conclusions of Theorem \ref{simplereal}.  In particular, $D_i$ is maximal in $L_i$
and $G_i/L_i$ has odd order unless $L_i \cong \PSL_3(4)$.    

We next show that $G/A$ has odd order if $L_i \ne \PSL_3(4)$.   Suppose that there exists
$g \in G \setminus{A}$ an outer automorphim with $g^2 \in A$.  
We may assume that $g$ has order a power of $2$.  Then
$g$ cannot normalize all components $L_i$ (as the automizer in $G$ of any
component has odd order).   Thus,   we may assume that $g$ interchanges $L_1$ and $L_2$
and $g^2$ induces inner automorphisms on $L_1$ and $L_2$.  Replacing $g$ by $gu$
with $u \in L_1 \times L_2$ allows us to assume that $g$ interchanges the coordinates
of $L_1$ and $L_2$.  
   Choose $x \in L_1$ that has no fixed points. 
 Then $g$ inverts  $c:=(x,x^{-1}, 1 \ldots, 1)$ with $c \in A$.   Then $c$ has no fixed points
but is real, a contradiction.    A similar analysis when $L_i \cong \PSL_3(4)$ completes the proof. 
\end{proof}

Here is the analog for $n$ even. 

\begin{thm} \label{primitiverealeven}   Let  $G$ be a primitive permutation group acting on
a set $X$ of even cardinality $n$.    Assume that every real element of the socle of $G$ fixes a point
of $X$.   Let $M$ be a point stabilizer.  Then  $G$ has a unique minimal 
normal subgroup $A$ and 
$A = L_1 \times \ldots \times L_t$   with $L_i \cong L$ a nonabelian simple
group. Also,  $M \cap A = M_1 \times \ldots \times M_t$,  $M_i$
is maximal in $L_i$ and intersects every real conjugacy class of $G$ which
intersects $L_i$.   Furthermore $G/A$ has odd order and $(L_i, M_i)$
are described in Theorem \ref{simplerealeven}.   
\end{thm}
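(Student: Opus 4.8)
The plan is to mirror the proof of Theorem \ref{primitivereal} (the odd case), adapting each step to even degree and invoking Theorem \ref{simplerealeven} in place of Theorem \ref{simplereal}. First I would establish that $\OO_2(G)=1$: a point stabilizer $M$ is maximal in $G$ with $\mathrm{core}_G(M)=1$, and since every real element of the socle fixes a point, the socle cannot contain a nontrivial normal $2$-subgroup (as involutions are real, an involution in $\OO_2(G)\cap\soc(G)$ would be a real element fixing no point once we note it lies in a regular-type minimal normal subgroup). I would then rule out an abelian minimal normal subgroup $A=\OO_p(G)$ for any prime $p$ exactly as in Theorem \ref{primitivereal}: here $G=AM$, $A\cap M=1$, and because $A$ is abelian every element of $A$ is real in $G$ (inverted by a suitable element, using Lemma 3.1(d) of \cite{DMN} when $p$ is odd, and directly when $p=2$), so $A\le M$ forces $A=1$. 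This is where the even case genuinely differs from the odd case: there is no ``$G$ of odd order'' escape clause, so I must exclude abelian socle outright, which the evenness of $[G:M]$ and the reality of involutions deliver.

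Next I would show $A=\soc(G)=L_1\times\cdots\times L_t$ with $L_i\cong L$ nonabelian simple is the \emph{unique} minimal normal subgroup. Uniqueness follows verbatim from the odd-case argument: a second minimal normal subgroup $B$ centralizes $A$, so $M\cap B\trianglelefteq AM=G$ gives $M\cap B=1$, whence $|B|=[G:M]=n$; but $B$ nonabelian simple (or a product of such) contains real involutions fixing no point, contradicting the hypothesis. Then, viewing $X$ as the product $L_1/M_1\times\cdots\times L_t/M_t$ (where $M_i:=M\cap L_i$), I would observe that $x\in L_i$ fixes a point of $X$ iff it fixes a point of $L_i/M_i$, so each $M_i$ meets every real $G$-class meeting some $L_j$, in particular every real class of $L_i$. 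Setting $G_i=\norm G{L_i}/\cent G{L_i}$, an almost simple group with socle $L_i$ and with $\norm{G_i}{M_i}$ maximal, the quadruple $(G_i,L_i,\norm{G_i}{M_i},M_i)$ satisfies the hypotheses of the even analog of Theorem \ref{almostsimplereal}, forcing $M_i$ maximal in $L_i$ and $(L_i,M_i)$ to appear in Theorem \ref{simplerealeven}.

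The final task is to prove $G/A$ has odd order. I would argue as in the odd case: suppose some $2$-element $g\in G\setminus A$ with $g^2\in A$. Because the automizer $\norm G{L_i}/L_i\cent G{L_i}$ of each component has odd order (read off from the three cases $M_{11},M_{23},M_{24}$ of Theorem \ref{simplerealeven}, whose outer automorphism groups are $2$-groups but whose relevant almost-simple overgroups forced odd index), $g$ cannot normalize every component, so after modifying $g$ by an element of $L_1\times L_2$ I may take $g$ to swap the coordinates of $L_1$ and $L_2$. Choosing $x\in L_1$ with no fixed point on $L_1/M_1$, the element $c=(x,x^{-1},1,\dots,1)\in A$ is inverted by $g$, hence real in $G$, yet fixes no point of $X$, a contradiction. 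The main obstacle I anticipate is precisely this component-permutation step: unlike the odd case there is no $\PSL_3(4)$-style exception to track, but I must verify carefully that none of the three even-degree configurations of Theorem \ref{simplerealeven} admits an almost-simple overgroup of even index still meeting all real classes — equivalently that in each case the full $G_i$ is forced to equal $L_i$ — so that the automizer is genuinely of odd order and the swap argument runs unobstructed.
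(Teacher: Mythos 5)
Your plan follows the paper's proof of Theorem \ref{primitiverealeven} in all essentials: exclude a regular (in particular abelian) minimal normal subgroup using a fixed-point-free real involution, deduce a unique nonabelian minimal normal subgroup $A=L_1\times\cdots\times L_t$ with $M\cap A=M_1\times\cdots\times M_t$, feed each component into Theorem \ref{simplerealeven}, and run the component-swap argument to force $G/A$ of odd order. Two points need repair. First, your treatment of an abelian minimal normal subgroup $A=\oh pG$ with $p$ odd rests on the false claim that every element of an abelian normal subgroup is real in $G$; moreover Lemma 3.1(d) of \cite{DMN} only produces \emph{some} inverted element, and only when a $2$-group acts nontrivially on $A$, which you have not arranged (and cannot, since $M$ need not contain a Sylow $2$-subgroup here). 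Fortunately the case is vacuous: an abelian minimal normal subgroup of a primitive group acts regularly, so $|A|=n$ is even, forcing $p=2$, where your involution argument does apply. The paper phrases this uniformly: any regular minimal normal subgroup has order $n$, hence even order, hence contains an involution, which is a real element of the socle with no fixed point; this simultaneously kills the abelian case, the regular nonabelian case, and (since two distinct minimal normal subgroups of a primitive group are both regular) gives uniqueness of $A$.

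Second, the ``main obstacle'' you explicitly leave open --- that $G_i=\norm G{L_i}/\cent G{L_i}$ equals $L_i$, so that the automizer of each component has odd order and the swap argument runs --- is closed in the paper by a single observation: the only socles permitted by Theorem \ref{simplerealeven} are $M_{11}$, $M_{23}$ and $M_{24}$, all of which have \emph{trivial} (not merely $2$-group, as you state) outer automorphism group, so $G_i=L_i$ automatically and no ``even analog of Theorem \ref{almostsimplereal}'' is needed. With these two repairs your argument coincides with the paper's.
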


\begin{proof}     Let $A$ be a minimal normal subgroup of $G$.  Then $A$ acts transitively
whence $A$ has even order.   If $A$ acts regularly, then some involution of $A$
has no fixed points, a contradiction.   Thus, $G$ has no regular normal subgroups
and so $A = L_1 \times \ldots \times L_t$ with $L_i \cong L$, a nonabelian
simple group.   Since $M \cap L_i \ne 1$ for some $i$ (since $L_j$ contains real
elements), it follows that $M \cap A = M_1 \times \ldots \times M_t$ where 
$M_i$ are maximal in  the almost simple group $\norm G{M_i}/\cent G{M_i}$.   It follows
by Theorem \ref{simplerealeven} that $L_i$ is a Mathieu group and has trivial
outer automorphism group, whence $\norm G{M_i}/\cent G{M_i} = M_i$ and $M_i$
is maximal in $L_i$.  Arguing as in the case that $n$ is odd shows that 
$G/A$ has odd order.
\end{proof}

We close this section by proving a result about transitive groups which we require
for $2$-Brauer characters.

\begin{cor} \label{cor:oddreal}   Let $G$ be a (faithful)  transitive  permutation group of odd degree $n$.  
Let $A = L_1 \times \ldots  \times L_t$ be a normal subgroup of $G$ such that each $L_i \cong L$,
a nonabelian simple group  not isomorphic to any of $M_{22}, M_{23}$ or $\PSL_n(q)$ with $n$ odd.
Then some real element (in $G$) of $A$ is fixed point free.
\end{cor}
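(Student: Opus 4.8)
The plan is to argue by contradiction and reduce to the primitive classification already obtained in Theorem \ref{primitivereal}. Let $M$ be a point stabilizer, so that $\Omega = G/M$ and $\mathrm{core}_G(M)=1$, and suppose toward a contradiction that every element of $A$ that is real in $G$ fixes a point of $\Omega$; I must then force $L\cong M_{22}, M_{23}$ or $\PSL_n(q)$ with $n$ odd, contrary to hypothesis. Since a real element of a minimal normal subgroup $N\le A$ is in particular a real element of $A$, I may first replace $A$ by a minimal normal subgroup of $G$ contained in it; thus I assume $A=L_1\times\cdots\times L_t$ is minimal normal and $G$ permutes the $L_i$ transitively. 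Put $R=A\cap M$. Then $[A:R]=[AM:M]$ divides $[G:M]$, which is odd, while $R\ne A$ because $\mathrm{core}_G(M)=1$. Writing $R_i$ for the projection of $R$ to $L_i$, each $[L_i:R_i]$ divides the odd number $[A:R]$, so every $R_i$ has odd index in $L_i$; since $|L|$ is even, any proper subdirect (``diagonal'') subgroup of a product of copies of $L$ has index divisible by $|L|$, and this parity is what ultimately makes the simple-group input of Theorem \ref{simplereal} applicable.

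The main line is a reduction to a faithful primitive quotient. Choose a maximal subgroup $M^*\ge M$ and set $K=\mathrm{core}_G(M^*)$, so $\bar G=G/K$ acts faithfully and primitively on $\Delta=G/M^*$, of odd degree dividing $n$. Because $\Delta$ has odd degree, $\bar G$ cannot have two distinct minimal normal subgroups (these would both be regular, forcing $|\Delta|$ even), so $\soc(\bar G)$ is a single minimal normal subgroup. In the case that $A$ is transitive on $\Omega$, i.e. $AM=G$, no proper subgroup $M^*$ can contain $A$, so $A\cap K=1$; then $[A,K]\le A\cap K=1$, meaning $K$ centralizes $A$, and $\bar A\cong A$ is a minimal normal subgroup of $\bar G$, hence equals $\soc(\bar G)\cong L^t$. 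The centralizing property transfers reality: if $\bar a\in\bar A$ is real in $\bar G$, a lift $a\in A$ satisfies $a^{g}a\in A\cap K=1$, so $a^g=a^{-1}$ and $a$ is real in $G$; by assumption $a$ fixes a point of $\Omega$, whence $\bar a$ fixes a point of $\Delta$. Thus every real element of $\soc(\bar G)$ fixes a point on an odd-degree primitive space, and Theorem \ref{primitivereal} forces $L\cong M_{22}, M_{23}$ or $\PSL_n(q)$ with $n$ odd --- the desired contradiction.

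It remains to reach the transitive situation $AM=G$, and this is where I expect the main obstacle to lie. When $A$ is not transitive, set $H=AM<G$ and pass to the block system $G/H$, on which $A$ acts trivially (as $A\le\mathrm{core}_G(H)$); the base block carries a transitive action of $H$ of strictly smaller odd degree $[H:M]$. Writing $C=\mathrm{core}_H(M)$, one has $A=A_0\times(A\cap C)$ with $A_0\trianglelefteq H$ a product of copies of $L$, with $A_0\cap C=1$ and $C$ centralizing $A_0$; here $A_0$ is transitive on the base block, so induction on $|G|$ applies to the pair $(H/C,\,A_0C/C)$, and the centralizing of $A_0$ by $C$ again lets real elements of $A_0C/C$ lift to real elements of $G$.

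The delicate point --- the step I expect to require the most care --- is that a real element of $A_0$ which fixes \emph{some} point of $\Omega$ must be shown to fix a point of the \emph{base} block $H/M$, rather than merely of some other block. This is precisely the place where the transitivity of $G$ on the factors permutes the coordinates of $A=L_1\times\cdots\times L_t$ and can carry a fixed point into a different block. Overcoming it requires combining the direct-product shape of $R=R_1\times\cdots\times R_t$ forced by the odd index with a strengthening of Theorem \ref{simplereal}: that for each non-exceptional $L$ one can select a single $\Aut(L)$-stable real class that meets no subgroup of odd index in $L$. Such a class survives the automorphisms and coordinate permutations induced by $G$, and so produces a fixed-point-free real element immune to the whole $G$-action; establishing its existence by revisiting the case analysis of Theorem \ref{simplereal} (regular unipotent, irreducible semisimple, and the corresponding alternating-group elements, which avoid all parabolic and primitive odd-index maximals) is the crux of the argument.
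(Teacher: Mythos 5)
Your reduction in the case $AM=G$ is sound and genuinely different from the paper's argument: passing to the faithful primitive quotient $G/\mathrm{core}_G(M^*)$ and invoking Theorem \ref{primitivereal} does work there, since $A\cap\mathrm{core}_G(M^*)=1$ lets you transfer reality and fixed points in both directions. The problem is the intransitive case, and the gap you flag yourself is fatal as the argument stands. Your induction applied to $(H/C,\,A_0C/C)$ with $H=AM$ and $C=\mathrm{core}_H(M)$ produces a real $a\in A_0$ with no fixed point on the base block $H/M$; but $A_0$ is normal only in $H$, not in $G$, because the set of components lying in $C$ is $H$-stable but not $G$-stable. An element $g\in G\setminus H$ can carry $a$ into $A\cap C\le M$ (already for $t=2$ with $G$ swapping the two components and $A\cap M=L_1\times D_2$), so $a$ fixes a point in the block $gH$ and the contradiction hypothesis is not violated. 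Your proposed repair --- a single $\Aut(L)$-stable real class meeting no proper odd-index subgroup of $L$ --- is not established, and it asks for far more than is available: in the proof of Theorem \ref{simplereal} different odd-index subgroups are excluded by different real classes (for $\Alt_n$ one needs $(n-2)$-cycles, elements of cycle type $(3,3,n-6)$ and $3$-cycles jointly), and no single class does all the work.

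The paper avoids all of this by putting the quantifiers in the opposite order, with no induction and no transitivity split. Set $B=M\cap A$; since $n$ is odd, $B$ contains a full Sylow $2$-subgroup $S_1\times\cdots\times S_t$ of $A$, and if every projection $Q_i=\pi_i(B)$ were all of $L_i$ then $L_i=[Q_i,S_i]=[B,S_i]\le B$ for all $i$, forcing $B=A$. So some $Q_1$ lies in a maximal subgroup $M_1<L_1$ of odd index. Now for \emph{each} real $a\in L$ the hypothesis applied to the diagonal element $(a,\ldots,a)$ puts a conjugate of it in $B$, whose first coordinate lands in $Q_1\le M_1$; hence the one fixed subgroup $M_1$ meets every real class of $L_1$ (up to automorphisms), and Theorem \ref{simplereal} rules this out for the non-excluded $L$. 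If you want to salvage your framework, replace the search for a universal class by this fixed-subgroup argument; as written, the intransitive case of your proof does not close.
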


\begin{proof}   Let $H$ be a point stabilizer and set $B = H \cap A \ne A$.   Since $n$ is odd,
$ B \ge S_1 \times \ldots \times S_t$ where $S_i \in \Syl_2(L_i)$.   Let $Q_i = \pi_i(B)$
where $\pi_i$ is the projection from $A$ onto $L_i$.   If $Q_i = L_i$, then
$L_i = [Q_i, S_i]=[B, S_i] \le B$.   Thus (reordering if necessary),  $B \le  M_1 \times L_2 \times
\ldots \times L_t$ where $M_1$ is a maximal subgroup of $L_1$ containing $Q_1$.
    Let $a \in L$ be a real element.   Then $y:=(a,\ldots, a)$ has a fixed point, whence
$y^G \cap H$ is nonempty.   This implies that $M_1 \ge Q_1$ contains an $L_1$ conjugate of 
$a$ of every real element in $L_1$.   By Theorem \ref{simplereal} and the fact that we are excluding
the simple groups occurring in the conclusion of that theorem, we obtain a contradiction.
\end{proof} 

\section{Sylow 2-subgroups and Theorem D}

In this section we prove Theorem D.
For Brauer characters we use the notation in \cite{N}.
If $\Psi$ is a Brauer character of $G$, and
we write $\Psi=\sum_{\varphi \in \ibr G} a_\varphi \varphi$,
we call $a_\varphi$ the multiplicity of $\varphi$ in $\Psi$.

\medskip
Recall that the equivalence of (i) and (ii) in Theorem D is elementary
and well-known. (See for instance Proposition 6.4 in \cite{DNT}.)
We start with condition (iii).

%Lemma 3.1(d) of \cite{DMN}

\medskip

\begin{thm}\label{cor}
Let $G$ be a finite group and assume that $P \in \syl 2 G$ is not normal in $G$. 
 Then the 2-Brauer character $({1_{\norm GP}}^0)^G$ contains
 a non-trivial irreducible real 2-Brauer character with odd multiplicity.
\end{thm}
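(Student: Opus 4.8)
The plan is to reinterpret the statement modularly and then run a characteristic-$2$ Frobenius--Schur (quadratic type) parity argument. Fix an algebraically closed field $k$ of characteristic $2$ and write $H=\norm GP$. The class function $\Psi:=({1_H}^0)^G=((1_H)^G)^0$ is the Brauer character of the permutation module $M=k[\Omega]$, where $\Omega=G/H$ may be identified $G$-equivariantly with $\syl 2 G$ under conjugation; in particular $\Psi(1)=[G:H]=|\syl 2 G|$ is odd. Permutation modules are self-dual, $M\cong M^{*}$, so at the level of composition factors the non-self-dual simple modules occur in dual pairs $V,V^{*}$ with equal multiplicity. Since a simple $kG$-module is self-dual exactly when its Brauer character is real, the non-real constituents of $\Psi$ automatically occur with balanced (equal) multiplicities and can be ignored; the whole problem reduces to exhibiting one \emph{nontrivial} self-dual simple constituent of $M$ of odd multiplicity.

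Next I would set up the quadratic structure that detects individual parities. Equip $M$ with the $G$-invariant symmetric bilinear form $b(e_{x},e_{y})=\delta_{x,y}$; it is nondegenerate and, since $b(e_{x},e_{x})=1$, non-alternating. The all-ones vector $j=\sum_{x\in\Omega}e_{x}$ is $G$-fixed with $b(j,j)=|\Omega|=1$ (this is where oddness of $[G:H]$ enters), so $M=\langle j\rangle\perp j^{\perp}$ as $kG$-modules, and on the even-dimensional submodule $j^{\perp}=\{\sum c_{x}e_{x}:\sum c_{x}=0\}$ one computes $b(v,v)=\sum c_{x}^{2}=(\sum c_{x})^{2}=0$, so $b$ restricts to a nondegenerate $G$-invariant \emph{symplectic} form there. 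I would then invoke the characteristic-$2$ Frobenius--Schur theory (the Brauer analog of the orthogonal/symplectic dichotomy recorded for ordinary characters in Lemma \ref{bob}): each self-dual simple $kG$-module either is or is not of quadratic type, and for a self-dual module carrying an invariant quadratic or symplectic form the multiplicities of its self-dual simple constituents are constrained modulo $2$ by their quadratic type. Applying such a parity result to $j^{\perp}$ reduces the multiplicities, mod $2$, of the self-dual constituents of $M$ to fixed-point data of $G$ acting on $\Omega$.

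Finally I would feed in the hypothesis. Because $P$ is not normal in $G$, the elementary equivalence of conditions (i) and (ii) of Theorem D --- proved via Baer's theorem on involutions, as in Proposition $6.4$ of \cite{DNT} --- produces a nontrivial real element of odd order, equivalently a nontrivial real $2$-regular class, equivalently a nontrivial self-dual simple $kG$-module. The decisive step, and the point I expect to be the main obstacle, is to show that the parity bookkeeping of the previous paragraph forces one such nontrivial self-dual simple to occur in $M$ with odd multiplicity, rather than merely constraining the trivial constituent (whose behaviour is controlled directly, e.g.\ via $\dim_{k}M^{G}$ together with the oddness of $|\Omega|$). Concretely, one must argue that a nontrivial real $2$-regular class is ``seen'' by $j^{\perp}$ in the Frobenius--Schur count, thereby separating a nontrivial quadratic-type (or symplectic-type) simple from the trivial module. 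This is exactly the part that has to be carried out without the Classification, since the superficially similar assertion ``every real element of odd order normalizes a Sylow $2$-subgroup'' (condition (iv) of Theorem D) genuinely requires it. Once a nontrivial self-dual simple appears with odd multiplicity, its Brauer character is the desired nontrivial real irreducible $2$-Brauer constituent of $({1_{\norm GP}}^0)^G$ of odd multiplicity, completing the proof.
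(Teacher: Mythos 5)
Your proposal does not close the argument, and you say so yourself: the ``decisive step'' of forcing a \emph{nontrivial} self-dual constituent to have odd multiplicity is exactly what is left unproved, and the machinery you set up cannot deliver it. The existence of a $G$-invariant nondegenerate symplectic (or quadratic) form on $j^{\perp}$ imposes no parity constraint on the \emph{composition} multiplicities of self-dual simple modules: such constraints exist for dual pairs of non-self-dual factors (which you correctly dispose of), but there is no characteristic-$2$ Frobenius--Schur theorem bounding mod $2$ the multiplicity of a self-dual composition factor of a module carrying an invariant form. The paper's own remark after Lemma \ref{BigP} already illustrates the danger: for $G=\Alt_6$ the trivial module occurs with multiplicity $5$ in $({1_P}^0)^G$, so even the trivial constituent is not controlled by $\dim_k M^G=1$ as you suggest. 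The correct way to pin down the parity of the trivial multiplicity is via degrees: every nontrivial real irreducible $2$-Brauer character has even degree (Theorem 2.30 of \cite{N}), the non-real ones pair off, and $[G:\norm GP]$ is odd, forcing the trivial multiplicity $a$ to be odd.

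The idea your proposal is missing is to evaluate the Brauer character at a specific element rather than reason about the module globally. The paper takes a nontrivial real element $x$ of odd order (which exists since $P$ is not normal, by Proposition 6.4 of \cite{DNT}), lets $y$ be a $2$-element inverting $x$, and observes that $\langle y\rangle$ acts without fixed points on the set $\Omega$ of Sylow $2$-subgroups normalized by $x$ (if $Q^y=Q$ then $y\in Q$, whence $x^{-2}=y^xy^{-1}\in Q$, impossible). Hence $({1_{\norm GP}}^0)^G(x)=|\Omega|$ is even. If every nontrivial real constituent had even multiplicity, evaluating the decomposition at $x$ (where each $\varphi_i(x)+\bar\varphi_i(x)$ is twice an algebraic integer because $x$ is real) would give $\mathrm{even}=a+2\alpha$ with $\alpha$ an algebraic integer and $a$ odd, a contradiction. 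This is elementary, avoids any appeal to quadratic type, and is what your outline would need to replace its second and third paragraphs.
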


\begin{proof}
Arguing by contradiction, suppose that
$$({1_{\norm GP}}^0)^G=a{1_G}^0 + \sum_i b_i(\varphi_i + \bar\varphi_i) +
\sum_j c_j \mu_j ,$$
where $\varphi_i \in \ibr G$ are non-real, $\mu_j \in \ibr G$ are real but non-trivial,
$a$ is a positive integer, 
and $c_j$ is even for every $j$.
Now, by Theorem 2.30 of \cite{N}, we have that $\mu_j(1)$  is
even for every $j$.   Since $[G:\norm GP]$ is odd, we conclude that $a$ is odd.

Let $1\ne x \in G$ be a real element of odd order (using that $P$ is not normal
and Proposition 6.4 in \cite{DNT}).
Suppose that $x$ normalizes
some Sylow 2-subgroup $Q$. Let $y \in G$ invert $x$ with $y$ a 2-element.
Let $\Omega$ be the set of Sylow 2-subgroups of $G$ normalized
by $x$. Then notice that
$\langle y\rangle$ acts on $\Omega$  and has no fixed
point on it. Indeed, if $Q\in \Omega$ and
 $Q^y=Q$, then $y \in Q$. Then $y^x \in Q^x=Q$. However,
 $y^xy^{-1}=x^{-2} \in Q$, a contradiction. We conclude that $|\Omega|$ is even.
 Therefore $b=({1_{\norm GP}}^0)^G(x)$ is even.
 Thus
$$b=({1_{\norm GP}}^0)^G(x)= a + 2 \alpha $$
for some algebraic integer $\alpha$. We deduce that $-a/2$ is an algebraic integer,
but this is not possible because $a$ is odd.
\end{proof}

We briefly digress to   recall
 that, unlike ordinary characters,
  the trivial 2-Brauer character does not necessarily appear
with multiplicity 1 in the 2-Brauer character ${((1_P)^0})^G$,
 even if $P$ is self-normalizing. In the following, 
  $\Phi_\varphi$ is the projective indecomposable
character associated with $\varphi \in \ibr G$.

  \begin{lem}\label{BigP}
 If $p$ is a prime, $G$ is a finite group and $P\in \syl pG$, then
 $$({1_{P}}^0)^G=\sum_{\varphi \in \ibr G} {\Phi_\varphi(1) \over |P|} \varphi \, \, . $$
  \end{lem}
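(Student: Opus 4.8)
The plan is to compute, for each $\varphi \in \ibr G$, the multiplicity $a_\varphi$ of $\varphi$ in the Brauer character $\beta := ({1_{P}}^0)^G$ and to show directly that $a_\varphi = \Phi_\varphi(1)/|P|$. Write $G^0$ for the set of $p$-regular elements of $G$. The starting point is the duality between irreducible Brauer characters and projective indecomposable characters (see \cite{N}): since $\frac{1}{|G|}\sum_{x \in G^0}\mu(x)\overline{\Phi_\varphi(x)} = \delta_{\mu\varphi}$ for $\mu,\varphi \in \ibr G$, expanding $\beta = \sum_\mu a_\mu \mu$ yields
$$a_\varphi = \frac{1}{|G|}\sum_{x \in G^0}\beta(x)\overline{\Phi_\varphi(x)} \, .$$

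The key observation is that $\Phi_\varphi$, being the ordinary character of a lift of the projective module $P_\varphi$, vanishes on every $p$-singular element of $G$. First I would use this to replace $\beta$ by the ordinary permutation character $(1_P)^G$: the module affording $\beta$ is the reduction modulo $p$ of the permutation module $\CC[G/P]$, so $\beta$ and $(1_P)^G$ agree on $G^0$. As $\Phi_\varphi$ already vanishes off $G^0$, the $p$-singular classes contribute nothing and the sum may be extended to all of $G$, giving
$$a_\varphi = \frac{1}{|G|}\sum_{x \in G}(1_P)^G(x)\overline{\Phi_\varphi(x)} = [(1_P)^G, \Phi_\varphi] \, .$$

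Now $\Phi_\varphi$ is a genuine ordinary character of $G$, so ordinary Frobenius reciprocity applies and gives $a_\varphi = [1_P, \Res_P \Phi_\varphi] = \frac{1}{|P|}\sum_{x \in P}\overline{\Phi_\varphi(x)}$. Invoking the vanishing of $\Phi_\varphi$ on $p$-singular elements a second time, and noting that the identity is the only $p$-regular element of the $p$-group $P$, all terms with $x \ne 1$ drop out, leaving $a_\varphi = \Phi_\varphi(1)/|P|$, as claimed.

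The argument is short; the only point requiring care — and the one I would state explicitly — is the double use of the fact that $\Phi_\varphi$ is supported on $G^0$: once to pass from the Brauer character $\beta$ (a class function on $G^0$) to the ordinary character $(1_P)^G$ on all of $G$, and once to collapse the sum over $P$ to its value at the identity. As a sanity check one verifies that $\Phi_\varphi(1)=\dim P_\varphi$ is divisible by $|G|_p=|P|$ (because $P_\varphi$ is projective), so each $a_\varphi$ is a non-negative integer. Alternatively, the whole computation can be phrased module-theoretically: $a_\varphi = \dim_k \Hom_{kG}(P_\varphi, \Ind_P^G k)$, which by the adjunction equals $\dim_k \Hom_{kP}(\Res_P P_\varphi, k)$, and since $\Res_P P_\varphi$ is a free $kP$-module of rank $\dim P_\varphi/|P|$, this dimension is exactly $\Phi_\varphi(1)/|P|$.
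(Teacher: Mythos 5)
Your argument is correct, but it is organized differently from the paper's. The paper's one-line proof works with the induced character itself: by the induction formula, $({1_P}^0)^G$ vanishes at every nontrivial $p$-regular element (a conjugate of a $p$-regular element lying in the $p$-group $P$ must be trivial) and takes the value $[G:P]$ at the identity, so $({1_P}^0)^G$ is exactly $\frac{1}{|P|}$ times the restriction of the regular character to $p$-regular elements; the stated formula is then the standard expansion of that restriction as $\sum_{\varphi}\Phi_\varphi(1)\varphi$. You instead compute each multiplicity $a_\varphi$ separately, via the orthogonality between $\ibr G$ and the projective indecomposable characters, then pass to the ordinary inner product $[(1_P)^G,\Phi_\varphi]$ and apply Frobenius reciprocity together with the vanishing of $\Phi_\varphi$ on $P\setminus\{1\}$. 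The two proofs are essentially Frobenius-reciprocal to one another: the paper induces $1_P$ up and observes the support is $\{1\}$ among $p$-regular elements, while you restrict $\Phi_\varphi$ down to $P$ and observe the same thing. The paper's version is slicker as a single character identity and directly yields the parenthetical remark that $|P|$ divides $\Phi_\varphi(1)$ (so your ``sanity check,'' which assumes that divisibility, is redundant rather than needed); your version does not presuppose the decomposition of the regular character into projective indecomposables, and your closing module-theoretic reformulation via $\Hom_{kG}(P_\varphi,\Ind_P^G k)\cong\Hom_{kP}(\Res_P P_\varphi,k)$ with $\Res_P P_\varphi$ free is a clean third route to the same identity.
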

\begin{proof}
Use the induction formula and compare the character $({1_{P}}^0)^G$
with the restriction of the
regular character of $G$ to the $p$-regular elements of $G$.
\end{proof}
(Notice that Lemma \ref{BigP} reproves the well-known fact that $|G|_p$ divides the degrees of the projective indecomposable
characters.) Now, if $G= \alt 6$, then $P=\norm GP$ and the 2-Brauer trivial character of $G$ enters with
multiplicy 5 in the Brauer character $({1_{P}}^0)^G$.
\medskip

Next we show that condition (iv) of Theorem D does not happen in
 simple groups.

\begin{thm}\label{simplecase}   Let $G$ be a finite nonabelian simple group and let $P$ be a Sylow $2$-subgroup
of $G$.  Then there exists $g \in G$ with $g$ real and of odd order with $g$ not normalizing
any $G$-conjugate of $P$.
\end{thm}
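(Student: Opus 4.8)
The plan is to reduce the statement to the existence of a single well-chosen real element of odd order in each finite nonabelian simple group $G$ that cannot normalize any Sylow $2$-subgroup, and to use the Classification of Finite Simple Groups to carry this out family by family. The key structural observation is that if a real element $g$ of odd order normalizes some Sylow $2$-subgroup $P$, then $g$ normalizes a Sylow $2$-subgroup of $N_G(\langle g \rangle)$ as well; more usefully, $g$ normalizes a $G$-conjugate of $P$ if and only if $g$ lies in $N_G(Q)$ for some $Q \in \Syl_2(G)$, which forces $|\{Q \in \Syl_2(G) : g \in N_G(Q)\}|$ to be odd once $g$ is real of odd order (by the involution-counting argument in Theorem \ref{cor}: an involution $y$ inverting $g$ acts on this set of Sylow subgroups without fixed points, so the set has even cardinality, contradicting that it must be nonempty and—if $g$ normalizes exactly those conjugates—counted with the right parity). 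So the heart of the matter is simply to exhibit, in each $G$, a nontrivial real element of odd order that normalizes \emph{no} Sylow $2$-subgroup.

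First I would dispose of the small and sporadic cases by direct computation in GAP, exactly as was done for Theorems \ref{simplereal} and \ref{simplerealeven}, since the character-table library records real classes of odd order and their interaction with Sylow normalizers. For the alternating groups $\Alt_n$, I would use that a product of disjoint odd cycles filling up the support is real and of odd order, and choose the cycle type so that its centralizer, and hence any $2$-subgroup commuting with it, is too small to meet a full Sylow $2$-subgroup; an $n$-cycle (for $n$ odd) or an $(n-1)$-cycle fixing one point will typically have odd-order centralizer in $\Alt_n$, so it cannot normalize any $2$-group of the right size. The bulk of the work is the groups of Lie type, where I would split according to whether $-1$ lies in the Weyl group. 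When $-1 \in W$, all semisimple elements are real, so I would pick a regular semisimple element $g$ of odd order lying in a maximal torus $T$ whose order is prime to the characteristic and coprime to the order of any Sylow $2$-normalizer structure; then $N_G(\langle g \rangle)$ has odd-order $2$-part forcing $g$ to normalize no Sylow $2$-subgroup. When $-1 \notin W$, I would instead lean on real regular unipotent or real semisimple elements in suitable tori (of order dividing $q^{k}\pm 1$ for appropriate $k$) whose normalizers have controlled $2$-structure, using the Lang--Steinberg machinery and connectedness of centralizers as in the proof of Theorem \ref{simplereal}.

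The main obstacle will be producing a \emph{uniform} odd-order real element whose normalizer has odd index-behaviour relative to $\Syl_2(G)$ across all families of Lie type simultaneously, rather than treating dozens of cases ad hoc; the subtlety is that a real element of odd order can easily normalize a Sylow $2$-subgroup (for instance a regular semisimple element in a torus whose normalizer contains a full Sylow $2$-subgroup of the relevant Weyl group), so the choice of element must be made so that the $2$-part of $|N_G(\langle g\rangle)|$ is strictly smaller than $|P|$. I expect the cleanest route is to choose $g$ generating (modulo center) a cyclic maximal torus of order a Zsygmondy-prime multiple of $q^e\pm 1$ with $e$ maximal, so that $C_G(g)=T$ is a torus of odd order and $N_G(T)/T$ embeds in a small cyclic group; then the $2$-part of $|N_G(\langle g\rangle)|$ is bounded by the $2$-part of $|N_G(T)/T|$, which for generic $q$ and rank is far smaller than $|P|$, and the involution-counting argument finishes the case. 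The verification that such Zsygmondy elements exist and are real (invariant under inversion, which follows from their lying in a torus inverted by a Weyl-group or graph element) is the technical core, and the finitely many characteristic-two and small-$q$ exceptions get swept into the GAP computation.
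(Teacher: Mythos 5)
Your reduction is the right one (exhibit, in each simple $G$, a nontrivial real element of odd order normalizing no Sylow $2$-subgroup), but the mechanism you propose for the groups of Lie type --- which is the bulk of the theorem --- does not work. You argue that if $g$ is chosen so that the $2$-part of $|\norm{G}{\langle g\rangle}|$ is smaller than $|P|$ (or so that $\norm{G}{\langle g\rangle}$ has odd order), then $g$ cannot normalize any conjugate of $P$. This implication is false: $g\in\norm{G}{Q}$ for $Q\in\syl 2G$ does not force $Q$, or any sizeable $2$-subgroup, to normalize $\langle g\rangle$. Concretely, in $G=\Alt_5$ a $3$-cycle $g$ is real of odd order, $\norm{G}{\langle g\rangle}$ has order $6$ (so $2$-part $2<4=|P|$), and yet $g$ normalizes a Sylow $2$-subgroup, since $\norm{G}{P}\cong\Alt_4$. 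The correct version of this idea is the one the paper actually uses in a few spots: if $g\in\norm{G}{Q}$ then $Q\langle g\rangle\le\norm{G}{Q}$, so the \emph{order} of $g$ must divide $|\norm{G}{P}|$; hence a real element whose (prime) order does not divide $|\norm{G}{P}|$ works. That disposes of the five sporadic groups with non-self-normalizing Sylow $2$-subgroups and of $E_6^{\pm}(q)$, but it is nowhere near enough in general. Relatedly, your opening "structural observation" is backwards: the involution inverting $g$ shows the set of Sylow $2$-subgroups normalized by $g$ has \emph{even} cardinality, and evenness is perfectly compatible with the set being nonempty, so no contradiction is extracted there.

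Even after repairing the criterion, the program founders on exactly the cases the paper has to work hardest on. For $\PSL_n(q)$ with $n$ odd, the Zsygmondy/Coxeter torus element you propose is \emph{not} real (the torus normalizer acts by $x\mapsto x^{q^j}$ and $q^j\equiv-1$ requires $n$ even), and indeed in these groups the maximal end-node parabolics have odd index and meet every real class, so no single "generic" element can succeed; the paper instead uses transvections together with Kantor's classification of irreducible subgroups generated by long root elements, plus a reduction to $\PSL_2$. In the $-1\in W$ cases the paper must show every prime divisor of $|G|$ divides $|\norm{G}{P}|$ and then invoke \cite{LPS} to pin down the handful of candidate overgroups (e.g.\ $\Omega_{2m+1}(q)$ with the stabilizer of a minus-type hyperplane), which are then killed by bespoke real elements. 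Your sketch also misses the cheap observation that when $P=\norm GP$ \emph{any} nontrivial real element of odd order works (its existence being Proposition~6.4 of \cite{DNT}), which handles $\Alt_n$ for $n\ge 6$ and most sporadic groups at a stroke; the $n$-cycles you suggest are in any case real in $\Alt_n$ only when $n\equiv 1\pmod 4$. So as written the proposal has a genuine logical gap at its core and does not cover the hardest families.
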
 

\begin{proof}  Let us remark that if $P=\norm GP$, it follows by the Baer-Suzuki theorem that 
there  exists a real element $g \in G$ of odd order, whence the result follows.  This handles
$21$ of the $26$ sporadic groups.  In the remaining $5$ cases,  there is a real element of order
$5$ and $5$ does not divide the order of $\norm GP$ (these are the cases $G=J_1, J_2, J_3, Sz$ and   $HN$).

If $G=\Alt_n, n \ge 6$, then   $P=\norm GP$ and the result follows.   If $n=5$, then $5$-cycles are real
and do not normalize a Sylow $2$-subgroup. 

Now suppose that $G$ is a simple group of Lie type.   Since the proof is quite similar to that of 
Theorem \ref{simplereal}, we just indicate the modifications required.

Suppose first that $G$ is a group in characteristic $2$.   In this case, the proof given in Theorem \ref{simplereal}
shows that there exist odd order real elements not contained in any parabolic subgroup aside from the
cases with $G = \PSL_n(q)$ with $n$ odd.   The proof shows that the only parabolics which 
intersect every real class are the two maximal end node parabolics and in particular not the Borel subgroup,
which is the normalizer of a Sylow $2$-subgroup.

Finally, suppose that $G$ is a simple group of Lie type in characteristic not $2$.  If $G = \PSL(2,q), q \ge 11$, 
then either $P$ is self normalizing and the result follows as above or $|P|=4$ and $|\norm GP|=12$.  Note that
there is a real element of odd order either $(q+1)/2$ or $(q-1)/2$ and this element does not normalize a
Sylow $2$-subgroup.

 If $G=\PSL_n(q)$ or $\PSU_n(q)$ with $n \ge 3$, then transvections are real.   Let $V$ be the natural module
 for $\SL_n(q)$ or $\SU_n(q)$.    Note that a transvection
normalizing a Sylow $2$-subgroup would have to preserve each  irreducible $P$-submodule of $V$ of dimension
at least $2$.   Thus, if  unless $P$ is diagonalizable,  we can reduce to the  case that $P$ is irreducible.   
However,  by \cite{kantor}, the   irreducible subgroups containing
transvections are classified and $\norm GP$ is not a possibility.      If $P$ is diagonalizable, then by induction,
we can reduce to the case $n=2$ which was handled above.

Next assume that $-1$ is in the Weyl group of $G$.   Then every semisimple element is real.  Note that there
exist unipotent real elements  (since a Borel subgroup  $B$ has even order and $\oh 2B=1$, this
follows by the Baer-Suzuki theorem).   It follows that any prime dividing $|G|$ also divides $|\norm GP|$.
By \cite[10.7]{LPS},  the only possibilities for $(G,M)$ with $M$ containing $\norm GP$ are
$G=\Omega_{2m+1}(q)$ and $M$ the stabilizer of a hyperplane of $-$ type.     Let 
$x$ be an element whose order is the odd part of $q^m-1$.  Then the only irreducible subspaces of $x$
are two totally singular hyperplanes of dimension $m$ and a nondegenerate $1$-space.  Note that
$x$ is real and not conjugate to an element of $M$, whence the result. 

The only remaining possibility for $G$ a classical group is $\Omega_{2m}^{\pm}(q)$ with $m \ge 5$ odd.  
Note that in this case $P$ has a unique two dimensional irreducible submodule and so we can work
in $\Omega_{2m-2}^{\pm}(q)$ and use the previous result.

This leaves only the case that $G=E_6^{\pm}(q)$.  It is easy to see that $\norm GP/P$ embeds in the Weyl
group and so in particular contains no  elements of prime order greater than $5$.  On the other
hand, if $g \in F_4(q) < G$ is an element whose order is a primitive prime divisor of $q^{12}-1$, then
$g$ is real and has order at least $13$, whence $g$ normalizes no conjugate of $P$.  
\end{proof}

In order to prove the next result,  which easily closes the proof of Theorem
D, we use the fact that odd order real elements of factor
groups can be lifted to real elements of the group. This is key in an inductive hypothesis.

\begin{thm}\label{real}
Suppose that $G$ is a finite group,
 $P \in \syl 2 G$.
 If every odd order real element of $G$ lies in some $G$-conjugate of $\norm GP$,
 then $P$ is normal in $G$.
 \end{thm}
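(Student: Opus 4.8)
The plan is to argue by induction on $|G|$, the case $G=1$ being trivial. Fix a minimal normal subgroup $N_0$ of $G$, put $\bar G=G/N_0$ and $\bar P=PN_0/N_0\in\syl{2}{\bar G}$. The first step is to push the hypothesis down to $\bar G$. A Frattini argument applied to $PN_0\trianglelefteq\norm{G}{PN_0}$ (with $P\in\syl{2}{PN_0}$) gives $\norm{G}{PN_0}=N_0\,\norm GP$, so that $\norm{\bar G}{\bar P}=\overline{\norm GP}$. Given a real element $\bar x$ of odd order in $\bar G$, I would invoke the reality-lifting recalled just before the statement to produce a real element $y\in G$ of odd order with $yN_0=\bar x$; the hypothesis places $y$ in some conjugate $\norm GP^{\,g}$, and projecting modulo $N_0$ shows $\bar x\in\norm{\bar G}{\bar P}^{\,\bar g}$. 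Thus $\bar G$ satisfies the same hypothesis, and by induction $\bar P\trianglelefteq\bar G$, i.e.\ $PN_0\trianglelefteq G$.

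Next I would pin down the type of $N_0$. Suppose $N_0=L_1\times\cdots\times L_t$ is nonabelian, with $L_i\cong L$ simple, and write $P\cap N_0=S_1\times\cdots\times S_t$ with $S_i\in\syl{2}{L_i}$. By Theorem \ref{simplecase} there is a real element $g_1\in L_1$ of odd order normalizing no $L_1$-conjugate of $S_1$, equivalently no Sylow $2$-subgroup of $L_1$. Then $x=(g_1,1,\dots,1)\in N_0$ is real of odd order in $G$, so by hypothesis $x$ normalizes some $P^{g}\in\syl{2}{G}$; hence $x$ normalizes $P^{g}\cap N_0=(P\cap N_0)^{g}$, which is again a direct product of Sylow $2$-subgroups of the $L_i$. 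Since $x$ is supported on the first coordinate, this forces $g_1$ to normalize a Sylow $2$-subgroup of $L_1$, a contradiction. So $N_0$ is elementary abelian, say a $p$-group.

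If $p=2$ then $N_0\le\oh{2}{G}\le P$, so $PN_0=P$, and the first step already yields $P\trianglelefteq G$. So assume $p$ is odd. Since $[G:PN_0]$ is odd, every Sylow $2$-subgroup of $G$ lies in $H:=PN_0$ and is $H$-conjugate to $P$; combined with the Frattini identity $G=\norm GP\,N_0$, the subgroup $C:=\cent{N_0}{P}$, being normalized by $N_0$ (which is abelian) and by $\norm GP$, is normal in $G$. By minimality $C=1$ or $C=N_0$. If $C=1$, then $P$ acts nontrivially on $N_0\neq1$, so by the elementary fact that a nontrivial $2$-group action on an odd-order group inverts some nonidentity element there are $1\neq t\in N_0$ and $s\in P$ with $t^{s}=t^{-1}$; this $t$ is real of odd order, and conjugating it into $\norm GP$ inside $H$ produces $1\neq t'\in N_0$ normalizing $P$, whence $[t',P]\le N_0\cap P=1$ and $t'\in C=1$, a contradiction. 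Therefore $C=N_0$, i.e.\ $[N_0,P]=1$, so $N_0\le\cent GP\le\norm GP$ and $G=\norm GP\,N_0=\norm GP$; that is, $P\trianglelefteq G$.

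I expect the two delicate points to be the descent in the first step and the odd-prime case. The descent rests entirely on the reality-lifting for quotients, which is exactly why that lemma is flagged as ``key'' for the induction. In the odd case the crux is recognizing that $\cent{N_0}{P}$ is genuinely normal in $G$ (via the Frattini identity), so that minimality of $N_0$ can be set against the inverted element furnished by the hypothesis; without normality of this centralizer the argument stalls. The nonabelian case, by contrast, is short once Theorem \ref{simplecase} is available, and this is precisely where the Classification enters.
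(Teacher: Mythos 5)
Your proof is correct and follows essentially the same route as the paper's: induction on $|G|$ using the reality-lifting lemma to force $PN_0\nor G$ for a minimal normal subgroup $N_0$, Theorem \ref{simplecase} applied componentwise to rule out a nonabelian socle, and the inverted-element/coprime argument to dispose of the odd elementary abelian case. The only difference is presentational — you make explicit, via the Frattini argument, why $\cent{N_0}{P}$ is normal in $G$ so that minimality forces it to be $1$ or $N_0$, a dichotomy the paper's proof states more tersely.
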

 
 \begin{proof}
 Argue by induction on
 $|G|$.  Suppose that $1<N$ is normal in 
 $G$, and let $Nx \in G/N$ of odd order.  By Lemma 3.2 of \cite{NST},
 there is $y \in G$ real such that $Nx=Ny$. Since $Nx$ has odd order, then
 by taking $y_{2'}$, we may assume that $y$ has odd order. Then $y^g \in \norm GP$ for some $g \in G$, 
 so $Nx^g \in \norm{G/N}{PN/N}$, and by induction, we may assume that
 $PN$ is normal in $G$. 
 
 In particular, assume that $N$ is a minimal normal subgroup, and let $K=PN$.  If $N$ is a $2$-group, then $N \le P$
 and so $P$ is normal in $G$.   If $N$ is an elementary abelian subgroup of odd order, then either
 $P$ centralizes $N$ whence $P$ is characteristic in $K$ and so normal in $G$ or $\cent PN=1$
 and $P=\norm KP$.   Then Baer-Suzuki implies that there exist a real element $g$ of odd order in $N$.
 Note that $g$ does not normalize a Sylow $2$-subgroup of $K$.    Since $K$ is normal in $G$, 
 any Sylow $2$-subgroup of $G$ is contained in $K$ and we have a contradiction.

 So $N$ is a direct product of nonabelian simple groups.   Let $L$ be a component of $N$.
 By Theorem \ref{simplecase},  there is an element $g \in L$ that is real and of odd order which
 does not normalize any Sylow $2$-subgroup of $L$.   Then $g$ normalizes no Sylow $2$-subgroup
 of $N$ (since if $g$ normalizes some Sylow $2$-subgroup $T$ of $G$, then $g$ also normalizes 
 $T \cap L$, a Sylow $2$-subgroup of $L$).  
 \end{proof}

 \section{Proof of Theorem B}
Our notation for characters follows \cite{Is}. If $N\nor G$ and $\theta \in \irr N$,
then $\irr{G|\theta}$ is the set of irreducible
constituents of the induced character $\theta^G$. By the  Frobenius reciprocity, these
are the characters $\chi$ such that $\theta$ is a constituent of the restriction
$\chi_N$. Also, $\nu_2(\chi)$ denotes the Frobenius-Schur indicator of $\chi \in \irr G$.
If $\chi$ is real valued, then $\nu_2(\chi)=\pm 1$, according to its type.

\begin{lem}\label{intersect}
 Let $G$ be a finite group, and let $H$ be a subgroup of $G$. If
    $(1_H)^G$ has
a unique real-valued irreducible constituent with odd multiplicity, then
$x^G \cap H \ne \emptyset$ for every real element $x \in G$. Furthermore,
$[G:H]$ is odd.
\end{lem}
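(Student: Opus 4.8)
The plan is to prove the sharper statement that $(1_H)^G(x)$ is an \emph{odd} integer for every real element $x\in G$. Since $(1_H)^G(x)$ equals the number of cosets of $H$ fixed by $x$, its being odd forces it to be nonzero, which is exactly the assertion $x^G\cap H\neq\emptyset$; and applying this to the real element $x=1$ shows that $[G:H]=(1_H)^G(1)$ is odd. Thus both conclusions are special cases of a single parity statement.

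First I would reformulate the hypothesis. By Frobenius reciprocity $[(1_H)^G,1_G]=[1_H,1_H]=1$, so $1_G$ is always a real-valued irreducible constituent of odd multiplicity; hence the uniqueness assumption says precisely that every \emph{other} real-valued $\chi\in\irr G$ occurs with even multiplicity $m_\chi:=[(1_H)^G,\chi]$ (this includes the case $m_\chi=0$).

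Next, fixing a real element $x$, I would evaluate $(1_H)^G(x)=\sum_{\chi\in\irr G}m_\chi\chi(x)$ and split the sum into three parts. The trivial character contributes $1$. Because $x$ is real, $\overline{\chi(x)}=\chi(x^{-1})=\chi(x)$, so each $\chi(x)$ is a real algebraic integer and $\bar\chi(x)=\chi(x)$; grouping the non-real characters into conjugate pairs $\{\chi,\bar\chi\}$ (with $m_\chi=m_{\bar\chi}$ since $(1_H)^G$ is real-valued), each pair contributes $m_\chi(\chi(x)+\bar\chi(x))=2m_\chi\chi(x)$. Finally, every nontrivial real-valued $\chi$ has $m_\chi$ even, so its contribution $m_\chi\chi(x)$ is twice an algebraic integer. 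Collecting terms gives $(1_H)^G(x)=1+2\gamma$, where $\gamma$ is a sum of algebraic integers, hence itself an algebraic integer. Since $(1_H)^G(x)$ is a rational integer, $\gamma=\tfrac12\big((1_H)^G(x)-1\big)$ is a rational number that is an algebraic integer, so $\gamma\in\ZZ$ and $(1_H)^G(x)$ is odd. Taking $x=1$ yields that $[G:H]$ is odd, while for general real $x$ oddness forces $(1_H)^G(x)\ge 1$, i.e.\ $x^G\cap H\neq\emptyset$.

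The one point requiring care — the main obstacle — is the passage from algebraic integers to rational integers: for $x\neq 1$ the individual values $\chi(x)$ are merely algebraic integers, and what makes the argument go through is that the even multiplicities together with the complex-conjugate pairing isolate the single odd contribution coming from $1_G$, forcing the remaining error term $\gamma$ to be simultaneously an algebraic integer and a half-integer, hence a genuine integer. The supporting facts (that $1_G$ has multiplicity one, that $\chi(x)$ is real for real $x$, and that conjugate characters share the same multiplicity in a real character) are all elementary, so no use of the Frobenius--Schur indicator or of orbital counts is needed.
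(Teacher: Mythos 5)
Your proof is correct and follows essentially the same route as the paper's: the same decomposition of $(1_H)^G$ into $1_G$, evenly-weighted real constituents, and conjugate pairs, followed by the observation that evaluation at a real element gives $1$ plus twice an algebraic integer, which cannot be a rational integer of the wrong parity. The only difference is cosmetic: you prove directly that $(1_H)^G(x)$ is odd (a sharpening the paper itself records in the remark immediately after the lemma), whereas the paper argues by contradiction from $(1_H)^G(x)=0$ and handles $[G:H]$ by evaluating at the identity, exactly as in your $x=1$ case.
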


\begin{proof}
Write
$$(1_H)^G=1_G+2\sum_{i} a_i\epsilon_i + \sum_j  b_j(\tau_j + \bar\tau_j) \, ,$$
where $\epsilon_i \in \irr G$ are real-valued, and $\tau_j \in \irr G$ are not.
If $x \in G$ is real and $x^G \cap H=\emptyset$, then by evaluating in $x$ the expression above,
we conclude that 
$$0=1 +2 \alpha \, ,$$
where $\alpha$ is an algebraic integer. This is impossible. For the second part,
evaluate the character $(1_H)^G$ in the trivial element.
\end{proof}

Indeed, in Lemma \ref{intersect}, the conclusion  $x^G \cap H \ne \emptyset$ can be replaced by the stronger
condition $(1_H)^G(x)$ is odd for every real element $x \in G$.  Unfortunately,
this does not seem to shorten the proofs much -- in all the cases with 
$G$ primitive of odd degree, the condition that
every real element  has a fixed point turns out to be equivalent 
to the condition that every real element has an odd number of fixed points. 

\medskip

%\begin{thm}\label{simple}
%Suppose that $G$ is a simple and let $H$ be
%a maximal subgroup of $G$ of odd index.
%Assume that every real conjugacy class of $G$ intersects with $H$.
%Then there exists a real-valued $1 \ne \theta \in \irr G$ over $1_H$ 
%such that $[\theta_H, 1_H]$ is odd.
%\end{thm}
%

The following is well-known.
\begin{lem}\label{belowover}
Suppose that $G/N$ has odd order. 
\begin{enumerate}
\item
If $\chi \in \irr G$ is real valued, then every irreducible
constituent of $\chi_N$ is real-valued.
\item If $\theta \in \irr N$ is real-valued, then
there exists a unique  real-valued $\chi \in \irr{G|\theta}$. 
\end{enumerate}

\end{lem}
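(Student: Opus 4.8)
The plan is to deduce both parts from Clifford theory, using at the key moments the two facts that $[G:N]$ is odd and that (by Burnside's theorem) the only real-valued irreducible character of an odd-order group is the trivial one.

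For (i), I would fix an irreducible constituent $\theta$ of $\chi_N$ and consider its $G$-orbit $\mathcal O=\{\theta^g : g\in G\}$, which by Clifford's theorem is exactly the set of constituents of $\chi_N$. Since $\chi$ is real, $\chi_N=\overline{\chi_N}$, so $\mathcal O$ is stable under complex conjugation $\psi\mapsto\bar\psi$. Because conjugation commutes with the $G$-action, $\overline{\psi^g}=(\bar\psi)^g$, it is a $G$-equivariant involution of $\mathcal O$; as $|\mathcal O|=[G:I_G(\theta)]$ divides the odd number $[G:N]$, the set $\mathcal O$ has odd cardinality, so this involution fixes some $\theta^g$. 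Then $\overline{\theta^g}=\theta^g$ yields $\bar\theta=\theta$ after applying $g^{-1}$, whence every constituent of $\chi_N$ is real.

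For (ii) I would first reduce to the case that $\theta$ is $G$-invariant. With $T=I_G(\theta)$, induction $\psi\mapsto\psi^G$ is the Clifford correspondence bijection $\irr{T|\theta}\to\irr{G|\theta}$, and it commutes with conjugation (since $\overline{\psi^G}=(\bar\psi)^G$ and $T=I_G(\bar\theta)$); thus real characters over $\theta$ in $G$ correspond bijectively to those in $T$, and as $T/N$ again has odd order I may assume $T=G$. Now I claim $\theta$ extends to $G$: choosing a projective representation of $G$ lying over $\theta$ with factor set $\alpha$, the class $[\alpha]\in H^2(G/N,\CC^\times)$ is the obstruction to extending $\theta$, depends only on $\theta$, and has order dividing $|G/N|$, hence odd; but the complex-conjugate representation lies over $\bar\theta=\theta$ with factor set $\bar\alpha$, forcing $[\alpha]=[\bar\alpha]=[\alpha]^{-1}$, so $[\alpha]^2=1$ and therefore $[\alpha]=1$. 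Thus $\theta$ has an extension.

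It remains to pin down a real extension and conclude. Fixing any extension $\hat\theta_0$, its conjugate $\overline{\hat\theta_0}$ is again an extension, so $\overline{\hat\theta_0}=\gamma\hat\theta_0$ for some linear $\gamma\in\irr{G/N}$; since squaring is bijective on the odd-order group of linear characters of $G/N$, there is a unique linear $\beta$ with $\beta^2=\gamma$, and then $\hat\theta:=\beta\hat\theta_0$ satisfies $\overline{\hat\theta}=\hat\theta$. Finally, by Gallagher's theorem $\irr{G|\theta}=\{\beta\hat\theta:\beta\in\irr{G/N}\}$, and $\beta\hat\theta$ is real if and only if $\bar\beta=\beta$, i.e. if and only if $\beta$ is real, which by Burnside forces $\beta=1_{G/N}$. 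Hence $\hat\theta$ is the unique real-valued character in $\irr{G|\theta}$. The main obstacle is this middle step, producing the extension and then a real one; the cohomological vanishing, driven by the collision of oddness with $2$-torsion, is the heart of the matter, while the reductions are routine Clifford theory.
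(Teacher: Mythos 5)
Your proof is correct. For part (i) your argument is essentially the paper's: the paper picks $g$ with $\bar\theta=\theta^g$, notes $g^2$ fixes $\theta$ and uses the odd order of $G/N$ to conclude $g$ fixes $\theta$; your version packages the same idea as ``a $G$-equivariant involution on an orbit of odd length has a fixed point.'' The real divergence is in part (ii): the paper simply cites Corollary 2.2 of [NT], whereas you supply a complete proof --- reduction to the $G$-invariant case via the Clifford correspondence (which commutes with complex conjugation), vanishing of the obstruction class in $H^2(G/N,\CC^\times)$ because it is simultaneously of odd order and $2$-torsion, adjustment of an extension by the unique square root of the Gallagher character $\gamma$, and Burnside's theorem to kill all nontrivial real $\beta\in\irr{G/N}$. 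This is the standard argument and almost certainly the content of the cited result; the only step you gloss over is why $[\bar\alpha]=[\alpha]^{-1}$, which requires representing the class by a cocycle with values in roots of unity (harmless, since $H^2(G/N,\CC^\times)$ is $|G/N|$-torsion and the positive reals contribute nothing). Your write-up has the advantage of being self-contained; the paper's has the advantage of brevity by outsourcing (ii).
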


\begin{proof}Let $\theta \in \irr N$ be under $\chi$. 
By Clifford's theorem, we have that
 $\bar\theta$ and $\theta$ are $G$-conjugate by some
 $g \in G$. Then $g^2$ fixes $\theta$, and therefore $g$ fixes $\theta$,
 using that $G/N$ has odd order.   Hence, $\theta=\theta^g=\bar\theta$.
 The second part is  Corollary 2.2 of \cite{NT}.
\end{proof}

%\begin{lem}\label{same}
%Suppose that $\chi \in \irr G$ is real-valued and that $\chi_H=\nu \in \irr H$.
%Then $\chi$ and $\nu$ are of the same type.
%%\end{lem}
%
%\begin{proof}
%This is an easy application of Theorem 4.14 of \cite{Is}.
%\end{proof}
  
\medskip
We should warn the reader that if $[G:H]$ is odd and $\theta \in \irr H$ is real
valued, even of $+$ type, it is not necessarily true that $\theta^G$ contains 
a real-valued irreducible constituent. The 
${\tt SmallGroup}(48,30)=C_3:Q_8$ is an example.
Also, it is often the case, that odd-degree real-valued irreducible constituents of $(1_H)^G$ appear
with odd multiplicity whenever $[G:H]$ is odd. But not always, as shown by $G={\rm PSL}_2(25)$ and $H=D_{24}$.

\medskip

 The following includes Theorem B of the introduction.

\begin{thm}\label{oddmult}  Let $G$ be a finite group and let
$H$ be a  proper subgroup of $G$.   Assume that   
  $G=KH$ where $K$ is the smallest normal subgroup of $G$ with $G/K$ of odd order.
Then $(1_H)^G$ contains a non-trivial real-valued irreducible
 constituent with  odd multiplicity.
\end{thm}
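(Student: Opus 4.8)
The plan is to argue by contradiction, assuming that $1_G$ is the \emph{only} real-valued irreducible constituent of $(1_H)^G$ of odd multiplicity. By Lemma~\ref{intersect} this forces $[G:H]$ to be odd and every real element of $G$ to have a conjugate in $H$; the goal is then to exhibit a nontrivial real-valued constituent of odd multiplicity, contradicting the assumption. Throughout write $K=\Oh{2'}G$, so that $G=KH$ and $G/K$ has odd order.

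The first, and decisive, use of the hypothesis $G=KH$ is to reduce to the case $K=G$. Since $G=KH$ there is a single $(K,H)$-double coset, so Mackey's formula gives $\Res_K (1_H)^G=(1_{H\cap K})^K$, with $H\cap K$ proper in $K$ (otherwise $G=KH=H$). As $\Oh{2'}K=K$, induction on $|G|$ applied to $(K,H\cap K)$ yields a nontrivial real $\varphi\in\irr K$ with $[(1_{H\cap K})^K,\varphi]$ odd. To push this up I use that $G/K$ has odd order: by Lemma~\ref{belowover} there is a unique real $\chi\in\irr{G|\varphi}$, while every other irreducible over $\varphi$ is non-real and hence occurs in a conjugate pair $\{\eta,\bar\eta\}$ (both lie over $\varphi=\bar\varphi$, with equal multiplicities in the real character $(1_H)^G$ and equal ramification over $\varphi$). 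Reducing $[(1_{H\cap K})^K,\varphi]=\sum_{\eta\in\irr{G|\varphi}}[(1_H)^G,\eta]\,[\Res_K\eta,\varphi]$ modulo $2$, the conjugate pairs cancel and one is left with $[(1_H)^G,\chi]\,e_\chi$, where $e_\chi$ is the ramification of $\chi$ over $\varphi$. Since $e_\chi$ divides $[I_G(\varphi):K]$, an odd number, it is odd, so $[(1_H)^G,\chi]$ is odd; as $\chi$ is nontrivial and real this is the contradiction sought. Hence I may assume $\Oh{2'}G=G$.

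With $\Oh{2'}G=G$ fixed, I reduce to $\operatorname{core}_G(H)=1$: if $N=\operatorname{core}_G(H)>1$ then $\Oh{2'}{G/N}=G/N$ and $H/N$ is proper, so a constituent obtained by induction on $G/N$ inflates (since $N\le H$) to one of $(1_H)^G$ of the same multiplicity. Now choose a maximal $M\supseteq H$ and pass to the primitive quotient $\bar G=G/\operatorname{core}_G(M)$ acting on $\bar G/\bar M$, a faithful primitive action of odd degree in which every real element fixes a point (real elements of $\bar G$ lift to real elements of $G$, which meet $H\le M$). Because $\Oh{2'}{\bar G}=\bar G$, the group $\bar G$ cannot have odd order, so Theorem~\ref{primitivereal} applies: the socle is nonabelian, $\bar A=\bar L_1\times\cdots\times\bar L_t$ with $\bar L\cong M_{22},M_{23}$ or $\PSL_n(q)$ ($n$ odd), $G/\bar A$ has odd order apart from a controlled $\PSL_3(4)$ exception, and $(1_{\bar M})^{\bar G}$ has a nontrivial orthogonal constituent of multiplicity $1$. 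If $H=M$ this inflates to a nontrivial real multiplicity-one constituent of $(1_H)^G$, a contradiction.

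The remaining, and hardest, case is $H<M$, where meeting every real class is genuinely possible for non-maximal subgroups (Corollary~\ref{nonmax} exhibits such subgroups in $M_{23}$ and $\PSL_n(q)$), so the constituent must be extracted at the simple level. I descend to a component $L:=\bar L_1$ and set $H_0=\bar H\cap L$; since $[G:H]$ is odd, $\bar H$ contains a full Sylow $2$-subgroup of $L$, so $H_0$ is proper in $L$, $[L:H_0]$ is odd, and $H_0$ meets every real class of $L$. Corollary~\ref{nonmax} then supplies a nontrivial real-valued constituent of $(1_{H_0})^L$ of multiplicity $1$, and the task is to transport it up through the wreath-type structure $\bar A=\bar L_1\times\cdots\times\bar L_t$ and the odd-order quotient $G/\bar A$, and finally back to $(1_H)^G$. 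When $G/\bar A$ has odd order this is once more the odd-quotient parity argument of the second paragraph, so multiplicity one (hence oddness) survives. The main obstacle is exactly this last transfer: one must track the real constituent \emph{and} the parity of its multiplicity simultaneously across the component structure and the odd-order quotient, and dispose separately of the $\PSL_3(4)$ case (where an elementary abelian $2$-group sits above the socle). This is precisely why Theorems~\ref{primitivereal}, \ref{simplereal} and Corollary~\ref{nonmax} are stated with multiplicity \emph{exactly} one: that sharpness is what lets oddness be preserved through each stage of the reduction.
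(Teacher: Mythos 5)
Your overall architecture matches the paper's: reduce to $\Oh{2'}G=G$ via the Mackey identity $((1_H)^G)_K=(1_{H\cap K})^K$ and the odd-quotient parity argument with Lemma \ref{belowover}, reduce to ${\rm core}_G(H)=1$, invoke Theorem \ref{primitivereal} through a maximal overgroup $M$ of $H$, and then handle $H<M$ at the level of the components via Corollary \ref{nonmax}. The first three stages are essentially correct (your mod-$2$ computation in the second paragraph is a clean version of the computation the paper performs at the very end of its proof). But the last stage --- which is the actual content of the theorem once $H$ is not maximal --- is not a proof: you state that one must ``transport'' the multiplicity-one constituent of $(1_{H_0})^L$ ``through the wreath-type structure'' and that ``the main obstacle is exactly this last transfer,'' and then you do not carry out the transfer. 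Identifying the obstacle is not the same as overcoming it.

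Concretely, what is missing is the following sandwich argument, which is the heart of the paper's proof. With $A=L_1\times\cdots\times L_t$ the unique minimal normal subgroup, set $C_i=H\cap L_i$, $C=C_1\times\cdots\times C_t$ and $B_i=\norm{L_i}{C_i}$, $B=B_1\times\cdots\times B_t$. Since $H$ permutes the $L_i$, it normalizes $C$, so $C\le H\cap A\le\norm AC=B$, and therefore $(1_B)^A\le(1_{H\cap A})^A\le(1_C)^A$ constituent-wise. Corollary \ref{nonmax} is stated precisely so that $(1_{C_i})^{L_i}$ and $(1_{B_i})^{L_i}$ contain the \emph{same} nontrivial real constituent $\tau_i$ with multiplicity $1$ (and in the $M_{22}$, $M_{23}$ cases $B_i=C_i$); hence $\eta=\tau_1\times\cdots\times\tau_t$ occurs in $((1_H)^G)_A=(1_{H\cap A})^A$ with multiplicity exactly $1$, not merely in $(1_{H_0})^L$. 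Without this step, knowing the multiplicity at the level of a single component $L$ tells you nothing about $(1_{H\cap A})^A$, because $H\cap A$ is in general strictly larger than $\prod_i(H\cap L_i)$ (it contains diagonal-type elements) and inducing from a larger subgroup can kill a constituent outright. Once $\eta$ is pinned down with multiplicity $1$ in the restriction to $A$, your paragraph-two argument lifts it to $G$ when $G/A$ has odd order; the remaining $\PSL_3(4)$ configuration, where one must first pass through the intermediate normal subgroup $A_1$ with $A_1/A$ an elementary abelian $2$-group, is also only waved at in your write-up and needs its own (short) treatment.
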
 

\begin{proof}  
Let $G$ be a counterexample minimizing $|G|$.
 By Lemma \ref{intersect}, we have that $[G:H]$ is odd and that $H$ intersects
every real conjugacy class of $G$.   Let $1<V$ be a normal subgroup of $G$.
By working in $G/V$, we may assume that $HV=G$ (since if $\phi$ is an
irreducible constituent of $(1_{HV})^G$, then its multiplicity in $(1_{HV})^G$ is the same as in $(1_H)^G$).  In particular, 
${\rm core}_G(H)=1$. 

 Note that $\oh 2 G\le H$ and so
$\oh 2 G=1$.    Let $A$ be a minimal normal subgroup of $G$, so that $G=HA$.  
Suppose that $A$ is an elementary abelian $p$-group for some odd prime $p$. 
Then $H\cap A \nor G$ and thus $H\cap A=1$ and $\cent HA=1$.
In particular, if $P$ is a Sylow 2-subgroup of $H$, then $P$ does not centralize $A$.
Hence, $A$ contains a non-trivial real element $x$ (see for instance Lemma 3.1(d) of
\cite{DMN}). However, $x^G \cap H \sbs A \cap H=1$, a contradiction.

We claim that $A$ is the unique minimal normal subgroup of $G$.    Let $B$ be another such.
Then $G=HB$ and $B$ centralizes $H \cap A$, whence $H \cap A$ is normal, a contradiction.

 It follows that $A=L_1 \times \ldots \times L_t$ with $L_i \cong L$ a nonabelian simple group.
Let $C_i = H \cap L_i$. If $x \in L_i$ is real, then $x^g \in H$. Using that $g=ah$
for some $a \in A$ and $h \in H$, and that $[L_i,L_j]=1$ for $i\ne j$, we 
have that $x^{l_i} \in C_i$ for some $l_i \in L_i$. Therefore
$L_i$ satisfies the hypotheses of Corollary \ref{nonmax} with respect to the subgroup
$C_i$. 

Let $M$ be a maximal subgroup of $G$ containing $A \cap H$.  
Notice that ${\rm core}_G(M)=1$, since otherwise, $A \le M$ but $G=AH$.
By Theorem \ref{primitivereal}, we have that  $K=A$ unless possibly $L \cong \PSL_3(4)$
and $K/A$ is an elementary abelian $2$-group normalizing each $L_i$.  
In particular, $H$ acts transitively on 
the $L_i$.

Let $C=C_1 \times \ldots \times C_t$ and note that $H \le \norm GC$.  
Also, $\norm AC=B_1 \times \ldots \times B_t$, where $B_i=\norm{L_i}{C_i}$.
Let $\pi_i: H \cap A \rightarrow L_i$ be the projection map.
So $C_i \le \pi_i(H \cap C) \le B_i$.   We now apply Corollary \ref{nonmax}.    If $L = M_{22}$,
then   $B_i=C_i$ is maximal in $L_i$.   If $L = M_{23}$, then $B_i=C_i$ since $C_i$ is self-normalizing.
Thus, in these cases  $A \cap H = C$.   

In the case that $L = \PSL_n(q)$ with $n$ odd,  $B_i$ is the maximal parabolic containing $C_i$.   
  
  By Corollary \ref{nonmax},
  we have that
  the characters 
 $$(1_{C_1})^{L_1} \times \ldots \times (1_{C_t})^{L_t}$$
 and $$(1_{B_1})^{L_1} \times \ldots \times (1_{B_t})^{L_t}$$
 have a non-trivial
real-valued constituent $\eta \in \irr K$ of multiplicity $1$. 
Since $C \le H\cap K \le B$, in all cases we conclude that
    $$((1_H)^G)_K=(1_{H\cap K})^K$$
   contains a non-trivial irreducible constituent $\eta$ with multiplicity 1.
   (Here we are using that $(1_{C_i})^{L_i}$ and
   $(1_{B_i})^{L_i}$ contain the same non-trivial irreducible constituent $\tau_i$
   with multiplicity 1, by Corollary \ref{nonmax}.)
   
Since $G/K$ is odd, let $\nu \in \irr G$ be the unique real-valued irreducible
constituent lying over $\eta$, by Lemma \ref{belowover}.
   Now, if we write
   $$(1_H)^G=a\nu + \sum_{i} a_i (\nu_i + \bar\nu_i) + \Delta \, $$
   where $\Delta$ is a character such that $[\Delta_K, \eta]=0$ and $\nu_i \in \irr G$
   are non-real and lie over $\eta$, we have that
   $$1=[((1_H)^G)_K, \eta]=a + 2\sum_i a_i[\nu_i, \eta], \, $$
   and we deduce that $a=1$.    \end{proof}   
   
   Notice that Lemma \ref{bob} below implies that the odd-multiplicity irreducible
   character in Theorem \ref{oddmult} is of $+$ type. The following is well-known.
   
\begin{lem}\label{bob}
Suppose that $G$ is a finite group, and $H$ is a subgroup
of $G$.   Let  $\rho$ be a character of $G$ which admits a $G$-invariant
non degenerate quadratic form.  If $\chi \in \irr G$ is real-valued 
and   $[\chi_H, \rho]$ is odd, then $\chi$ has +  type.
\end{lem}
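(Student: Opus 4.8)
The plan is to convert the hypothesis, which concerns a multiplicity computed over $H$, into an orthogonality statement over $G$, and then to invoke the standard parity principle for symplectic constituents of orthogonal modules. I read $[\chi_H,\rho]$ as the inner product over $H$ of $\chi_H$ with the restriction $\rho_H$; since $\rho$ admits a $G$-invariant nondegenerate quadratic form, that same form is $H$-invariant, so $\rho_H$ admits an $H$-invariant nondegenerate quadratic form. By Frobenius reciprocity, $[\chi_H,\rho]=[\chi,(\rho_H)^G]$, the multiplicity of $\chi$ in the induced character $(\rho_H)^G$. The first step is then to observe that induction preserves orthogonality: if $M$ is a $\CC H$-module affording $\rho_H$ and carrying an $H$-invariant nondegenerate symmetric bilinear form, then $\Ind_H^G M$ carries a $G$-invariant nondegenerate symmetric bilinear form, obtained by inducing the form over a transversal. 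Hence $(\rho_H)^G$ admits a $G$-invariant nondegenerate quadratic form.

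Next I would record the parity principle, which the paper already flags as well known: if a $\CC G$-module $W$ carries a $G$-invariant nondegenerate symmetric bilinear form $B$, then every real-valued irreducible constituent of $W$ that is of $-$ type occurs with even multiplicity. To see this, decompose $W$ into $\psi$-isotypic components; invariance of $B$ forces it to pair the $\psi$-component with the $\psi^{*}$-component, so for a self-dual $\psi$ the form restricts nondegenerately to the $\psi$-isotypic part $V_\psi\otimes U$, where $U=\Hom_{\CC G}(V_\psi,W)$ is the multiplicity space. The one-dimensional space of $G$-invariant bilinear forms on the irreducible self-dual $V_\psi$ is spanned by a symmetric form when $\psi$ is of $+$ type and by an alternating form when $\psi$ is of $-$ type; matching the symmetry of $B$ then forces the inherited nondegenerate form on $U$ to be symmetric in the first case and alternating in the second. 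A nondegenerate alternating form exists only in even dimension, so $\dim U$, the multiplicity of $\psi$, is even when $\psi$ is of $-$ type. (Polarizing the quadratic form to a symmetric bilinear form is harmless in characteristic zero.)

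Applying this with $W=(\rho_H)^G$, which is orthogonal by the first step, and recalling that $\chi$ is real valued and hence either of $+$ or $-$ type, the odd multiplicity $[\chi,(\rho_H)^G]=[\chi_H,\rho]$ rules out the $-$ type. Therefore $\chi$ is of $+$ type, as claimed. The only substantive points are the two standard ingredients—that induction carries an invariant nondegenerate quadratic form to an invariant nondegenerate quadratic form, and the parity principle for symplectic constituents—so the main obstacle is simply making the multiplicity-space bookkeeping precise, in particular verifying that for $\psi$ of $-$ type the form inherited by $U$ is genuinely alternating and nondegenerate; once the isotypic decomposition is fixed this is routine.
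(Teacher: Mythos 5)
Your proof is correct, but the route to the key parity statement is genuinely different from the paper's. The preliminary reduction is the same in spirit: both arguments isolate the nondegenerate $\chi$-isotypic (homogeneous) component of an orthogonal $G$-module, so that one may assume the relevant orthogonal character equals $m\chi$ with $m$ odd. (You arrive at that orthogonal character as $(\rho_H)^G$ via Frobenius reciprocity together with the fact that induction carries invariant nondegenerate symmetric forms to invariant nondegenerate symmetric forms; the paper works directly with a module affording $\rho$ and takes $m=[\rho,\chi]$, leaving the passage from the stated hypothesis $[\chi_H,\rho]$ implicit.) From there the paper argues by induction on $m$: if $\nu_2(\chi)=-1$, an irreducible submodule $U$ of the homogeneous component is necessarily totally singular, and $U^{\perp}/U$ is again nondegenerate with character $(m-2)\chi$, so the multiplicity drops by $2$ until one reaches the clear base case $m=1$. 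You instead decompose the isotypic component as $V_\chi\otimes U$ with $U$ the multiplicity space, write the invariant form as $b_\chi\otimes\beta$ using Schur's lemma, and observe that when $\nu_2(\chi)=-1$ the form $\beta$ is a nondegenerate alternating form on $U$, forcing $\dim U$ to be even. Your version proves the even-multiplicity statement for all symplectic constituents in one stroke and makes the symplectic structure on the multiplicity space explicit, at the cost of the tensor-decomposition bookkeeping you acknowledge; the paper's descending induction avoids that bookkeeping and sits closer to the alternative Schur-index argument sketched just after the lemma. Both arguments are complete.
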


\begin{proof}
Let $V$ be a $\mathbb{C}G$-module affording $\rho$.  Let $W$
be the homogeneous component of $V$ corresponding to $\chi$.
Note that $W$ is orthogonal to every other homogeneous component
(since it is self dual), whence $W$ is non degenerate.  

So we may
assume that $\rho = m \chi$ with $m$ odd.  We induct on $m$. If $m=1$,
the result is clear.  Otherwise,  let $U$ be an irreducible
submodule of $W$.   If $\nu_2(\chi)=-1$,  then $U$ must be totally
singular with respect to the quadratic form.     Then $U^{\perp}/U$ is non degenerate with respect
to the quadratic form  with character $(m-2)\chi$ and the result follows. 
\end{proof}
\medskip

In particular, Lemma \ref{bob} applies to the case where
$\rho=(1_H)^G$ or more
generally if $\rho= \psi^G$ with $\psi \in \irr G$ irreducible of  + type
 under the
assumption that $[\rho_H, \psi]$ is odd.  One can also give a proof
of the previous result using Schur indices.   The proof above
shows that the Schur index over $\RR$ is odd and divides $2$
whence the result.

   \medskip
   To summarize the results in this section,
    we have shown the following.
   \begin{thm}\label{all}
Let $H$ be a subgroup of $G$.   Then $(1_H)^G$ has a
non-trivial real-valued irreducible constituent 
of odd multiplicity under any of the following  hypotheses.
\begin{enumerate}
\item  $[G:H]$ is even.
\item  $H \cap x^G = \emptyset$ for some real element $x$ of $G$.
\item  $(1_H)^G(x)$ is even for some real element $x$ of $G$.
\item  $H$ is maximal in $G$ and $G/{\rm core}_G(H)$ has even order. 
\item  $G=\Oh{2'} GH$ and $H$ does not contain $\Oh{2'} G$.
\end{enumerate}
\end{thm}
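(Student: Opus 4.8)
The plan is to derive each of the five hypotheses as a sufficient condition by reducing them, in one step or two, to the main results already established in the paper—principally Lemma~\ref{intersect} and Theorem~\ref{oddmult}. The unifying observation is the contrapositive form of Lemma~\ref{intersect}: if $(1_H)^G$ fails to have a nontrivial real-valued constituent of odd multiplicity, then its \emph{only} real-valued odd-multiplicity constituent is $1_G$, and hence (by that lemma) $[G:H]$ is odd and every real conjugacy class of $G$ meets $H$. So in each case I would assume for contradiction that no such nontrivial constituent exists and deduce a violation of one of these two consequences.

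For the individual items: (i) is immediate, since $[G:H]$ odd is forced by Lemma~\ref{intersect}, so $[G:H]$ even gives the conclusion at once. Items (ii) and (iii) are essentially restatements of the two strengths of Lemma~\ref{intersect}—the weak form gives that some real class misses $H$ is impossible, and the parenthetical strengthening noted after that lemma (that $(1_H)^G(x)$ must be odd for every real $x$) handles (iii); in both cases, evaluating the expansion
\[
(1_H)^G = 1_G + 2\sum_i a_i \epsilon_i + \sum_j b_j(\tau_j + \bar\tau_j)
\]
at the real element $x$ yields $(1_H)^G(x) = 1 + 2\alpha$ for an algebraic integer $\alpha$, contradicting either $x^G \cap H = \emptyset$ (which forces the left side to be $0$) or $(1_H)^G(x)$ even. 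For (iv), I would argue that if $H$ is maximal and $G/\mathrm{core}_G(H)$ has even order, then passing to $\bar G = G/\mathrm{core}_G(H)$ makes $\bar H$ maximal with trivial core and even index $[\bar G:\bar H] = [G:H]$; since the multiplicities of constituents are unchanged under this quotient, (i) applies. (Alternatively this follows directly from Theorem~A, whose contrapositive shows a nontrivial real $+$-type constituent exists precisely when the index is even.)

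Finally, (v) is exactly Theorem~\ref{oddmult} with $K = \Oh{2'}G$: the hypothesis $G = \Oh{2'}GH$ is the condition $G = KH$, and $H$ not containing $\Oh{2'}G$ is just $H$ being proper (if $H \supseteq K$ then $G = KH = H$). So (v) is a direct citation. I do not expect a genuine obstacle here, since every item is a corollary of machinery already in place; the only point requiring care is the quotient-invariance of multiplicities used in (iv) and implicitly throughout—namely that for $V \nor G$ with $V \le \mathrm{core}_G(H)$, the constituents of $(1_{H/V})^{G/V}$ correspond to those of $(1_H)^G$ with the same multiplicities and the same reality and type—and verifying that the real-valuedness and Frobenius--Schur type of a constituent are preserved under inflation. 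This is standard, but it is the one place I would state explicitly rather than wave through.
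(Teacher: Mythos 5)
Your handling of items (i), (ii), (iii) and (v) is correct and matches the paper: the first three are exactly Lemma \ref{intersect} together with the computation inside its proof, and (v) is a direct citation of Theorem \ref{oddmult}, after observing that $G=\Oh{2'}G\,H$ with $H\not\supseteq\Oh{2'}G$ forces $H$ to be proper. Your remark about checking that reality and Frobenius--Schur type are preserved under inflation is also the right thing to flag.

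The gap is in (iv). Your reduction to (i) rests on the claim that $G/{\rm core}_G(H)$ having even order forces $[G:H]$ to be even; this is false, since $|G/{\rm core}_G(H)| = [G:H]\cdot[H:{\rm core}_G(H)]$ and the evenness can live entirely in the second factor. Concretely, take $G={\sf S}_4$ and $H={\sf D}_8$ a Sylow $2$-subgroup: $H$ is maximal, ${\rm core}_G(H)$ is the Klein four-group, $G/{\rm core}_G(H)\cong {\sf S}_3$ has even order, yet $[G:H]=3$ is odd, so (i) never applies. Your fallback via Theorem A does not close the gap either: when $[G:{\rm core}_G(H)]$ is even, Theorem A produces a second real-valued $+$ type constituent, but says nothing about its multiplicity being odd, which is what the statement requires. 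The paper's route for (iv) is through Theorem \ref{oddmult}, i.e.\ through item (v): after passing to $G/{\rm core}_G(H)$ one may assume ${\rm core}_G(H)=1$ and $|G|$ even, so $K=\Oh{2'}G\ne 1$ is normal and hence not contained in $H$; maximality of $H$ then gives $G=KH$, and Theorem \ref{oddmult} applies. This is the one place where the maximality hypothesis in (iv) is genuinely needed, and your argument never uses it.
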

\begin{proof} The first three results follow from Lemma \ref{intersect}
and its proof,
while (iv) and (v) follow from Theorem \ref{oddmult}.
\end{proof}

\section{More on 2-Brauer characters} \label{sec:2brauer}

In this final section, we prove that there are versions of Theorems A and B
for 2-Brauer characters. First we need some results for $p$-Brauer
characters, where $p$ is any prime, 
which are essentially well-known. 
If  $\Psi=\sum_{\varphi \in \ibr G} a_\varphi \varphi$
is a Brauer character of $G$,
 with certain abuse of notation, let us write $a_\varphi=[\Psi,\varphi]$.

\begin{lem}\label{belowoverBr}
Suppose that $G/N$ has odd order. 

\begin{enumerate}
\item If $\chi \in \ibr G$ is real valued, then every irreducible
constituent of $\chi_N$ is real-valued.

\item  If $\theta \in \ibr N$ is real-valued, then
there exists a unique real-valued $\chi \in \ibr{G|\theta}$. Furthermore,
$[\chi_N,\theta]=1$. 
\end{enumerate}
\end{lem}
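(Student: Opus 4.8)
The plan is to mirror the proof of Lemma~\ref{belowover}, replacing ordinary Clifford theory by its Brauer-character analog, all of which is available in \cite{N}. For (i), I would take $\theta \in \ibr N$ a constituent of $\chi_N$. By Clifford's theorem for Brauer characters, the constituents of $\chi_N$ are precisely the $G$-conjugates of $\theta$, all of equal multiplicity. Since $\chi$ is real valued, $\chi_N=\overline{\chi_N}$, and the constituents of $\overline{\chi_N}$ are the $\bar\theta^{\,g}$; hence $\bar\theta=\theta^{g}$ for some $g\in G$. Then $\theta^{g^{2}}=\theta$, so $g^{2}$ lies in the inertia group $T=G_\theta$. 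As $N\le T$ and $gN$ has odd order in $G/N$, some odd power $g^{2k+1}\in N\le T$ lies in $T$ as well; combining this with $g^{2}\in T$ forces $g\in T$, so $\theta=\theta^{g}=\bar\theta$ is real valued.

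For (ii) I would first reduce to the case that $\theta$ is $G$-invariant. With $T=G_\theta$, induction $\psi\mapsto\psi^{G}$ is a bijection $\ibr{T|\theta}\to\ibr{G|\theta}$ that commutes with complex conjugation (induction commutes with duality) and satisfies $[(\psi^{G})_N,\theta]=[\psi_N,\theta]$, since the $G$-conjugates of $\theta$ occurring in $(\psi^{G})_N$ are distinct. Thus existence and uniqueness of a real constituent, together with the value of $[\chi_N,\theta]$, transfer from $T$ to $G$, and I may assume $\theta$ is $G$-invariant.

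Next I would show $\theta$ extends to $G$. The Brauer-character Clifford theory attaches to the $G$-invariant $\theta$ an obstruction class $[\alpha]\in H^{2}(G/N,\mathbb{C}^{\times})$ that vanishes exactly when $\theta$ extends and whose order divides $|G/N|$. Since $\bar\theta=\theta$, comparing a projective representation associated to $\theta$ with its entrywise conjugate (a projective representation associated to $\bar\theta$ with factor set $\bar\alpha=\alpha^{-1}$) gives $[\alpha]=[\bar\alpha]=[\alpha]^{-1}$, so $[\alpha]^{2}=1$; as $|G/N|$ is odd, $[\alpha]$ has odd order and hence $[\alpha]=1$. The extensions of $\theta$ then form a torsor under $\Hom(G/N,\mathbb{C}^{\times})$, a group of odd order on which complex conjugation acts by $\lambda\mapsto\lambda^{-1}$; because $\lambda\mapsto\lambda^{2}$ is bijective on an odd-order abelian group, there is a \emph{unique} real extension $\hat\theta\in\ibr G$.

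Finally, by Gallagher's theorem for Brauer characters, $\ibr{G|\theta}=\{\beta\hat\theta:\beta\in\ibr{G/N}\}$ with $[(\beta\hat\theta)_N,\theta]=\beta(1)$. Since $\hat\theta$ is real, $\beta\hat\theta$ is real iff $\beta$ is real; and an odd-order group has only the identity as a real element, hence a unique real $p$-regular class and a unique real-valued irreducible Brauer character, namely $1_{G/N}$. Therefore $\hat\theta$ is the unique real constituent and $[\hat\theta_N,\theta]=\beta(1)=1$. I expect the only real difficulty to be expository rather than mathematical: assembling the Brauer-character Clifford machinery (the Clifford correspondence, the $H^{2}$-obstruction of order dividing $|G/N|$, and Gallagher's theorem) and the count of real-valued irreducible Brauer characters of an odd-order group, each of which is standard and can be cited from \cite{N}.
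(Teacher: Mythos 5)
Your proof is correct and, at bottom, follows the same route as the paper: part (i) is verbatim the argument of Lemma \ref{belowover}(i), and in part (ii) your reduction to the invariant case via the Clifford correspondence, the construction of the unique real extension via the $H^2$-obstruction and the odd-order torsor of extensions, and the multiplicity computation $[\chi_N,\theta]=[\psi_N,\theta]=1$ reproduce exactly the content the paper imports by citing Lemma 5 and Corollary 1 of \cite{NT06}. The one step worth making explicit is your final appeal to the Brauer-character analogue of Burnside's theorem (an odd-order group has only the trivial real-valued irreducible Brauer character): this follows from Brauer's permutation lemma applied to the nonsingular Brauer character table, since such a group has no nontrivial real $p$-regular classes.
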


\begin{proof} 
Part (i) follows precisely as in the proof of  Lemma \ref{belowover}(i).
The first part of (ii) follows from Lemma 5 and Corollary 1 of \cite{NT06}.
 Now, notice,
in that proof, that  $\chi=\psi^G$, where $\psi \in \irr T$ is the unique real extension
of $\theta$ to the stabilizer $T$ of $\theta$ in $G$.
By the Clifford correspondence for Brauer
characters (Theorem 8.9 of \cite{N}),
we have that $[\chi_N,\theta]=[\psi_N,\theta]=1$.
\end{proof}
 
\medskip

We also need the following result, which is well-known for ordinary
characters. We have been unable to find a proof without using projective representations. (The analogs of this result for induction of 
Brauer characters and Isaacs $\pi$-partial characters can be found in \cite{NRS}.)

\begin{lem}\label{proj}
 Suppose that $G=NH$, where $N \nor G$ and $H \le G$.
 Let $D=N\cap H$.
 Let $\theta \in \ibr N$ be $G$-invariant such that $\theta_D=\varphi \in \ibr D$.
 Then restriction defines a bijection
 $\ibr{G|\theta} \rightarrow \ibr{H|\varphi}$.
 \end{lem}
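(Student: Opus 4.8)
The plan is to prove this by reducing to the theory of projective representations and then invoking Clifford theory for Brauer characters. The key structural fact I would exploit is that $G = NH$ with $N \nor G$ forces $G/N \cong H/D$ via the natural map, so that the characters of $H$ lying over $\varphi$ and the characters of $G$ lying over $\theta$ should be controlled by the ``same'' factor group data. Since $\theta$ is $G$-invariant and $\varphi = \theta_D$ is $D$-invariant (being the restriction of a $G$-invariant character), both induction problems are governed by the same obstruction cocycle.

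First I would set up the projective representation machinery: associate to the $G$-invariant $\theta \in \ibr N$ a projective representation $\mathcal{P}$ of $G$ with factor set $\alpha$, following the Brauer-character analog of Chapter 8 and 11 of \cite{N}. Restricting $\mathcal{P}$ to $H$ gives a projective representation of $H$ whose restriction to $D = N \cap H$ affords $\varphi$. The central point is that because $G = NH$, the cohomology class of the factor set $\alpha$ on $G/N$ pulls back to the cohomology class governing $\ibr{H|\varphi}$ over $H/D$, and these are identified under the canonical isomorphism $G/N \cong H/D$. Thus the characters in $\ibr{G|\theta}$ correspond to irreducible projective Brauer characters of $G/N$ with factor set $\bar\alpha$, and likewise $\ibr{H|\varphi}$ corresponds to irreducible projective Brauer characters of $H/D$ with the matching factor set. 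This is the substance of Clifford theory: there is a bijection between $\ibr{G|\theta}$ and the $\bar\alpha$-projective irreducible Brauer characters of $G/N$, obtained by the formula $\chi \mapsto$ (the character afforded by the complementary factor in the tensor decomposition $\mathcal{P} \otimes \mathcal{Q}$).

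The heart of the argument is then to verify that restriction from $G$ to $H$ realizes this bijection explicitly. Given $\chi \in \ibr{G|\theta}$, write $\chi$ as the character of $\mathcal{P} \otimes \mathcal{Q}$ where $\mathcal{Q}$ is an $\alpha^{-1}$-projective representation of $G$ factoring through $G/N$. Restricting to $H$, the factor $\mathcal{P}|_H$ restricts further to afford $\varphi$ on $D$, while $\mathcal{Q}|_H$ factors through $H/D$; since $G/N \cong H/D$, the representation $\mathcal{Q}|_H$ remains irreducible projective of the matching factor set, so $\chi_H = (\mathcal{P}|_H) \otimes (\mathcal{Q}|_H)$ lies in $\ibr{H|\varphi}$ and is irreducible. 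This gives the map $\ibr{G|\theta} \to \ibr{H|\varphi}$, and the inverse is constructed symmetrically, using that every $\alpha^{-1}$-projective irreducible of $H/D$ inflates (via the isomorphism) to one of $G/N$.

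The main obstacle I anticipate is the bookkeeping that the factor sets genuinely match up under $G/N \cong H/D$ --- one must choose the projective representation $\mathcal{P}$ of $G$ and its restriction $\mathcal{P}|_H$ consistently so that the cocycle $\bar\alpha$ on $G/N$ and the cocycle on $H/D$ coincide under the identification, rather than differing by a coboundary that could obstruct the clean bijection. This is precisely the step where the hypothesis $G = NH$ (equivalently $H$ is a complement-like supplement to $N$) is essential and where purely character-theoretic arguments seem to fail, which is why the statement records that projective representations are needed. Once the factor sets are identified, the bijectivity and the preservation of irreducibility follow from the standard correspondence between ordinary irreducible Brauer characters of a group and projective irreducible Brauer characters of a central-type factor group.
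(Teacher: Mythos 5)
Your proposal is correct and follows essentially the same route as the paper: associate a projective representation $\mathcal{P}$ of $G$ to the $G$-invariant $\theta$ with factor set $\alpha$ constant on $N$-cosets, note that $\mathcal{P}_H$ is a projective representation of $H$ associated with $\varphi$, write each $\mu\in\ibr{G|\theta}$ as afforded by $\mathcal{Q}\otimes\mathcal{P}$ with $\mathcal{Q}$ an $\alpha^{-1}$-projective representation of $G/N$, and use the canonical isomorphism $G/N\cong H/D$ to see that restriction preserves irreducibility and is injective and surjective. The factor-set matching you flag as the main obstacle is handled in the paper exactly as you anticipate, via the identity $\alpha(xn,ym)=\alpha(x,y)$, which lets $\alpha$ be viewed in $\mathbf{Z}^2(G/N,\C^\times)$ and transported to $H/D$.
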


\begin{proof}
Let ${\mathcal P}$ be a projective representation
 of $G$ associated with $\theta$ with factor set $\alpha$.
 (That is, ${\mathcal P}_N$ affords $\theta$, ${\mathcal P}(gn)={\mathcal P}(g){\mathcal P}(n)$
 and ${\mathcal P}(ng)={\mathcal P}(n){\mathcal P}(g)$ for $g \in G, n \in N$,
 see Theorem 8.14 of \cite{N}.)
 We know that $\alpha(xn,ym)=\alpha(x,y)$ for $x, y \in G$
 and $n,m \in N$. View $\alpha \in {\bf Z}^2(G/N,\C^\times)$.
 Then notice that ${\mathcal P}_H$ is a projective representation
 of $H$ associated with $\varphi$.
 
 Now, let $\mu \in \ibr{G|\theta}$. By  
 Theorem 8.16 in \cite{N},  there
 is a projective representation $\mathcal Q$ of $G/N$ with
 factor set $\alpha^{-1}$ such that
 ${\mathcal Q} \otimes {\mathcal P}$ affords $\mu$.
 
 By Theorem 8.18 of \cite{N}, we have that $\mathcal Q$ is irreducible.
 Now, notice that $\mathcal Q_H$ is irreducible since $Nh \mapsto Dh$
 is an isomorphism $G/N \rightarrow H/D$.
 By Theorem 8.18 of \cite{N}, we have that the representation
 ${\mathcal Q}_H \otimes {\mathcal P}_H=
 ({\mathcal Q}  \otimes {\mathcal P})_H$ is irreducible. Therefore 
 $\mu_H$ is irreducible.
 
 Suppose now that $\tau \in \ibr{G|\theta}$, and let ${\mathcal Q}'$
 as before such that ${\mathcal Q}' \otimes {\mathcal P}$ affords $\tau$.
 If $\tau_H=\mu_H$ then
 ${\mathcal Q}'_H \otimes {\mathcal P}_H$ and 
 ${\mathcal Q}_H \otimes {\mathcal P}_H$ are similar. By Theorem 8.16 of \cite{N},
 ${\mathcal Q}'_H$ and ${\mathcal Q}_H$ are similar. By the isomorphism,
  ${\mathcal Q}'$ and ${\mathcal Q}$ are similar, and therefore so are
  ${\mathcal Q} \otimes {\mathcal P}$ and ${\mathcal Q}' \otimes {\mathcal P}$.
  Hence $\tau=\mu$. Surjectivity is proved similarly.
  \end{proof}

\begin{lem}\label{below2Br}
Suppose that $N \le H \le G$, where $N$ is normal in $G$, $G/N$ has odd order, and $\theta \in \ibr H$ is real-valued.
Then there is a unique real-valued $\chi \in \ibr G$  such that
 $\theta$ is an irreducible constituent of $\chi_H$.
Furthermore, $[\chi_H, \theta]=1$.
If $p=2$, then $[\theta^G, \chi]=1$.
 \end{lem}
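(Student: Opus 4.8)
The plan is to separate the three assertions, proving existence, uniqueness, and the restriction-multiplicity statement $[\chi_H,\theta]=1$ by a counting argument over $N$ that works for every prime $p$, and to isolate the induction-multiplicity statement $[\theta^G,\chi]=1$ as the place where $p=2$ is genuinely needed. First I would choose an irreducible constituent $\eta\in\ibr N$ of $\theta_N$. Since $H/N$ has odd order and $\theta$ is real, Lemma \ref{belowoverBr}(i) gives that $\eta$ is real, and Lemma \ref{belowoverBr}(ii) applied to $N\nor H$ shows that $\theta$ is the unique real member of $\ibr{H|\eta}$ and that $[\theta_N,\eta]=1$. Applying Lemma \ref{belowoverBr}(ii) to $N\nor G$, I define $\chi$ to be the unique real member of $\ibr{G|\eta}$, which satisfies $[\chi_N,\eta]=1$.

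To see that $\theta$ occurs in $\chi_H$ with multiplicity one, I would restrict in two stages. Writing $\chi_H=\sum_i m_i\theta_i$ and restricting further to $N$, the identity $1=[\chi_N,\eta]=\sum_i m_i[(\theta_i)_N,\eta]$ forces exactly one constituent $\theta_{i_0}$ to lie over $\eta$, with $m_{i_0}=1$. Because $\chi$ is real, $\chi_H$ is a real Brauer character, so $\overline{\theta_{i_0}}$ also lies over $\eta=\bar\eta$ and occurs in $\chi_H$; uniqueness forces $\theta_{i_0}=\overline{\theta_{i_0}}$, hence $\theta_{i_0}=\theta$. This gives $[\chi_H,\theta]=1$. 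Uniqueness of $\chi$ is then immediate: any real $\chi'\in\ibr G$ with $\theta$ a constituent of $\chi'_H$ lies over $\eta$, and Lemma \ref{belowoverBr}(ii) admits only one such character.

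For the final claim, with $p=2$, I would first reduce to the case where $\eta$ is $G$-invariant. Let $T=I_G(\eta)$ and $S=T\cap H=I_H(\eta)$, and let $\psi\in\ibr{T|\eta}$ and $\rho\in\ibr{S|\eta}$ be the Clifford correspondents of $\chi$ and $\theta$, so $\chi=\psi^G$ and $\theta=\rho^H$. Since $\eta$ is $T$-invariant, every constituent of $\rho^T$ lies in $\ibr{T|\eta}$, and the Clifford correspondence $\xi\mapsto\xi^G$ is a multiplicity-preserving bijection $\ibr{T|\eta}\to\ibr{G|\eta}$; hence $[\theta^G,\chi]=[\rho^G,\psi^G]=[\rho^T,\psi]$. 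This replaces the quadruple $(G,H,\chi,\theta)$ by $(T,S,\psi,\rho)$, with $\eta$ now invariant.

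In the invariant case $\psi$ and $\rho$ are extensions of $\eta$ (indeed $\psi_N=\eta$), so setting $\hat\eta:=\psi$ I get $\rho=\hat\eta_S$ by uniqueness, and the projection formula yields $\rho^T=(\hat\eta_S)^T=\hat\eta\cdot(1_S)^T$. This is exactly where $p=2$ enters: because $N\le S$, the character $(1_S)^T$ is inflated from the permutation character $(1_{S/N})^{T/N}$ of the odd-order group $T/N$, and since $T/N$ is a $2'$-group its $2$-Brauer characters coincide with its ordinary characters. By the Gallagher correspondence $\ibr{T|\eta}=\{\hat\eta\beta:\beta\in\ibr{T/N}\}$, the multiplicity of $\hat\eta=\psi$ in $\hat\eta\cdot(1_S)^T$ equals the multiplicity of the trivial character in $(1_{S/N})^{T/N}$, which is $1$. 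Thus $[\theta^G,\chi]=[\rho^T,\psi]=1$. I expect the main obstacle to be precisely this last point: for Brauer characters induction and restriction multiplicities need not coincide, so $[\theta^G,\chi]=1$ cannot be deduced from $[\chi_H,\theta]=1$ by reciprocity, and one must genuinely use that for $p=2$ the odd-order quotient $T/N$ exhibits no nontrivial $p$-modular behaviour.
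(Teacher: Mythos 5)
Your proof is correct, but it takes a genuinely different route from the paper's in both of the places where real work is needed. For existence, the paper runs a minimal-counterexample induction (on $|G|$, then $[G:N]$, then $[G:H]$), reducing to $H$ maximal with $N={\rm core}_G(H)$, splitting off a chief factor $K/N$ and invoking the Clifford correspondence together with the projective-representation Lemma \ref{proj} to descend a real extension from $\irr{G|\hat\nu}$ to $H$. You instead read existence off directly from the identity $1=[\chi_N,\eta]=\sum_i m_i[(\theta_i)_N,\eta]$ for the unique real $\chi$ over $\eta$: the single constituent of $\chi_H$ lying over $\eta$ has multiplicity one and must be self-dual because $\chi_H$ and $\eta$ are, hence equals $\theta$ by Lemma \ref{belowoverBr}(ii). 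This is airtight (restriction of Brauer characters is transitive, and conjugation permutes constituents preserving multiplicities), it delivers existence, uniqueness and $[\chi_H,\theta]=1$ in one stroke, and it renders Lemma \ref{proj} unnecessary for this lemma --- a real simplification over the paper. For the final assertion the paper stays at the level of $N$: it uses the reciprocity $[\nu^G,\chi]=[\chi_N,\nu]$ (valid since $2\nmid |G/N|$), writes $\nu^H=\theta+\sum_i a_i(\eta_i+\bar\eta_i)$ with $\eta_i$ non-real, and observes that after inducing to $G$ the conjugate pairs contribute evenly to the real character $\chi$, forcing $[\theta^G,\chi]$ to be odd and at most $1$. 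You instead pass to the inertia group $T$ of $\eta$, identify $\rho=\psi_S$, and compute $[\rho^T,\psi]=[(1_{S/N})^{T/N},1]=1$ via the projection formula and Gallagher, using $p=2$ through the semisimplicity of the permutation module for the odd-order quotient $T/N$. Both uses of $p=2$ are legitimate; the paper's parity argument is shorter, while yours isolates the invariant case explicitly. The only tacit steps worth spelling out in your version are that every constituent of $\rho^T$ lies over $\eta$ (clear from $(\rho^T)_N=e[T:S]\eta$, since $N\le S$ and $\eta$ is $T$-invariant; this is what makes the Clifford bijection multiplicity-preserving) and that $\psi_S$ is irreducible (clear since $\psi_N=\eta$ is). Neither is a gap.
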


\begin{proof}
By Lemma \ref{belowoverBr}(i), let $\nu \in \irr N$ be real-valued under $\theta$.
First of all notice that if $\chi$ exists necessarily it is unique.
Indeed, if $\chi_i \in \irr G$ are real-valued over $\theta$, then
$\chi_i$ lies over $\nu$, and therefore $\chi_1=\chi_2$ by Lemma \ref{belowoverBr}(ii).
Also, the fact that if $\chi$ exists then $[\chi_H, \theta]=1$,   follows
from the second part of Lemma \ref{belowoverBr}.  

In this paragraph, we prove that if $p=2$, then $\chi$ is 
necessarily an irreducible constituent of  
 $\theta^G$. By the second part of Corollary 8.7 of \cite{N}, we
 have that $[\chi_N, \nu]=[\nu^G, \chi]$, that is, Frobenius reciprocity holds for $N \nor G$ if the characteristic does not divide $|G/N|$.
 Hence, by Lemma \ref{belowoverBr}, we can write $\nu^G=\theta + \sum_{i} a_i(\eta_i + \bar\eta_i)$,
 where $\eta_i \in \ibr H$ are not real-valued. 
 Now $\nu^G=\theta^G +  \sum_{i} a_i(\eta_i + \bar\eta_i)$. 
 Since $\chi$ is real-valued and appears with multiplicity one in $\nu^G$,
 necessarily $\chi$ is a constituent of $\theta^G$, and appears with multiplicity $1$.

Hence, we only need to prove that there exists some 
real-valued $\chi \in \irr G$
 lying over $\theta$.
Suppose that $(G,H,N)$ is a counterexample minimizing first $|G|$, then
$[G:N]$, and finally $[G:H]$. 
Using our inductive hypotheses,
it is straightforward to check that we may assume that
 $H$ is maximal and that $N$ is the core of $H$ in $G$.
  If $H=N$, then the claim is Lemma \ref{belowoverBr}.
Now, let $K/N$ be a chief factor of $G$. Then $KH=G$ and $K\cap H=N$, using that
$G/N$ is solvable.
Let $I$ be the stabilizer of $\nu$ in $H$ and let $\tau \in \irr I$ be the Clifford correspondent
of $\theta$ over $\nu$.  By the uniqueness in the Clifford correspondence,
we have that  $\tau$
is real-valued. If $IK<G$, then
by induction, there is a real-valued $\mu \in \irr{IK}$ over $\tau$.
Since $[G:K]<[G:N]$, by induction,
there is $\chi \in \irr G$ real-valued over $\mu$.
Then $\chi$ lies over $\tau$. Thus there is $\epsilon \in \ibr H$
under $\chi$ such that $\epsilon_I$ contains $\tau$.  
By the Clifford correspondence for Brauer characters,
necessarily $\epsilon=\tau^H=\theta$, and we are done in this case.
  Hence, we may assume that $IK=G$
and therefore  $I=H$. If $H$ is the stabilizer of $\nu$ in $G$,
then $\theta^G$ is irreducible and lies over $\theta$
by the Clifford correspondence, and we are done. 
Hence, we may assume that $\nu$ is $G$-invariant. 
By Lemma \ref{belowoverBr}(ii), there is a unique 
real-valued $\hat\nu \in \irr K$  over $\nu$, that
also extends $\nu$. By uniqueness, notice that
$\hat\nu$ is $G$-invariant. By Lemma \ref{proj},
we have that restriction defines
a bijection $\irr{G|\hat\nu} \rightarrow \irr{H|\nu}$. Now,
if $\gamma \in  \irr{G|\hat\nu}$ extends $\theta$,
then $\gamma$ is real-valued by the uniqueness of the restriction map. 
\end{proof}

We have mentioned in the introduction that 
the conclusion of Theorem B does not hold for
Brauer characters, and we have given some counterexamples.
However, using similar arguments as those used before, we can prove that essentially these
are all the counterexamples.   
We only sketch a proof. 

\begin{thm}\label{mainbrauers}
Let $G$ be a group with a proper subgroup $H$ of odd index.  Set $K=\Oh{2'}G$
and assume that $H$ does not contain $K$.  
If $K$ does not surject onto $M_{23}$ or ${\rm PSL}_n(q)$ with
$n$ odd and $q$ even, 
then  the $2$-Brauer character $((1_H)^0)^G$ has a 
non-trivial real-valued irreducible constituent  $\phi$.
\end{thm}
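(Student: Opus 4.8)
The plan is to transpose the proof of Theorem \ref{oddmult} to the modular setting, replacing Lemma \ref{intersect} by a $2$-Brauer intersection statement, the primitive analysis of Section \ref{sec:primitive} by its odd-order form, and the lifting Lemma \ref{belowover} by Lemmas \ref{belowoverBr} and \ref{below2Br}. Throughout set $M=((1_H)^0)^G$, a self-dual $2$-Brauer character of degree $[G:H]$, and I would prove the formally stronger assertion that $M$ has a nontrivial real-valued irreducible constituent of \emph{odd} multiplicity (this is what makes the induction go). Arguing by contradiction, write $M=a\,1_G^0+2\sum_j c_j\mu_j+\sum_i b_i(\varphi_i+\bar\varphi_i)$ with the $\mu_j\in\ibr G$ real and nontrivial and the $\varphi_i\in\ibr G$ nonreal. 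Evaluating at $1$ gives $[G:H]=a+2m$ with $m$ an integer, so $a$ is odd. If some real element $x$ of odd order had no fixed point on $\Omega:=G/H$, then the number of fixed points $M(x)=a+2\beta$ with $\beta$ an algebraic integer (using $\bar\varphi_i(x)=\varphi_i(x)$ for real $x$) would be $0$, forcing $a$ even, a contradiction. Hence every real element of $G$ of odd order fixes a point of $\Omega$; note also that, as $G/K$ has odd order, all real elements of $G$ lie in $K$.

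Next I would dispose of the odd quotient $G/K$. Restricting $M$ to $K$ and using that $M$ is self-dual (so nonreal irreducibles of $G$ contribute with even multiplicity to $[M_K,\eta]$ for every real $\eta\in\ibr K$) together with the lifting provided by Lemmas \ref{belowoverBr}(ii) and \ref{below2Br}, it suffices to produce a nontrivial real $\eta\in\ibr K$ of odd multiplicity in $M_K$: the unique real constituent over $\eta$ then appears in $M$ with odd multiplicity. Since $\Oh{2'}K=K$ and $K\cap H$ is proper of odd index and does not contain $K$, induction on $|G|$ reduces the theorem to the case $\Oh{2'}G=G$. In that case I would argue as in Theorem \ref{oddmult}: inflating, I may assume ${\rm core}_G(H)=1$ (so $\oh2G=1$) and $HV=G$ for every nontrivial normal $V$; a minimal normal subgroup $A$ that is elementary abelian of odd order is excluded exactly as there, by producing a fixed-point-free real element of odd order in $A$ against the conclusion of the first paragraph. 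Hence $A=L_1\times\cdots\times L_t$ with $L_i\cong L$ nonabelian simple and $HA=G$.

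Because every real element of $G$ of odd order fixes a point of $\Omega$ and all real elements lie in $K=G$, the pair $(G,M)$ with $M\supseteq A\cap H$ maximal satisfies the odd-order hypotheses of Section \ref{sec:primitive}. Using that the fixed-point-free real elements built in the proofs of Theorems \ref{simplereal} and \ref{simplecase} may be taken of odd order, Corollary \ref{cor:oddreal} and Theorem \ref{primitivereal} force $L\cong M_{22}$, $M_{23}$ or $\PSL_n(q)$ with $n$ odd, and $G/A$ of odd order (the $\PSL_3(4)$ exception being handled as in Theorem \ref{primitivereal}). A component of type $M_{23}$ or $\PSL_n(q)$ with $q$ even would make $K=G$ surject onto that simple group, which the hypothesis excludes; so $L\cong M_{22}$ or $L\cong\PSL_n(q)$ with $n$ odd and $q$ odd. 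In either case Corollary \ref{nonmax} supplies a nontrivial real-valued ordinary constituent of $(1_{C_i})^{L_i}$ of multiplicity $1$, and I would check that its reduction modulo $2$ contains a nontrivial self-dual irreducible $2$-Brauer constituent; the resulting constituent of $((1_{A\cap H})^0)^A=((1_{C_1})^0)^{L_1}\times\cdots\times((1_{C_t})^0)^{L_t}$ is real, nontrivial and of odd multiplicity, and since $HA=G$ with $G/A$ of odd order it lifts by Lemma \ref{belowoverBr}(ii) to the desired $\phi\in\ibr G$, a contradiction.

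I expect the genuine difficulty to be this last, modular, step: verifying for each admissible $(L,M)$ that the orthogonal ordinary constituent singled out by Corollary \ref{nonmax} does not split into complex-conjugate pairs upon reduction modulo $2$. For $M_{22}$ this is the decomposition $(55a)^0=1a+34a+10a+\overline{10a}$, in which the self-dual $34a$ survives, whereas for $M_{23}$ the relevant constituents reduce to conjugate pairs such as $(22a)^0=11a+\overline{11a}$, which is exactly why $M_{23}$ must be excluded; the sporadic cases are settled in GAP \cite{TB}. For $\PSL_n(q)$ the dichotomy is the defining-characteristic one: when $q$ is odd, $2$ is non-defining and the rational constituent of degree $(q^{n-1}-1)/(q-1)-1$ retains a nontrivial self-dual $2$-modular constituent, while when $q$ is even one is in the defining characteristic and, by \cite[1.8]{ZS}, the nontrivial composition factors are the non-self-dual fundamental modules $\omega_i$, so no real constituent can occur. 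A secondary point, which the odd-order formulation of Theorem \ref{simplecase} is designed to provide, is that the classification of Section \ref{sec:primitive} remains valid when only real elements of odd order are assumed to have fixed points.
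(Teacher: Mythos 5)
Your treatment of the core case --- parity of the multiplicity of the trivial Brauer character, fixed points of odd-order real elements, reduction to a primitive situation, the classification of Section \ref{sec:primitive}, and the modular verification for $M_{22}$ and $\PSL_n(q)$ --- is essentially the paper's argument, and your diagnosis of the exclusions (the degree-$22$ character of $M_{23}$ reducing to $11a+\overline{11a}$, and the non-self-dual fundamental factors of $\PSL_n(q)$ in defining characteristic) is exactly right. The genuine gap is in your reduction of the general case to $\Oh{2'}G=G$. You apply induction to the pair $(K,K\cap H)$ to produce a nontrivial real $\eta\in\ibr K$ of odd multiplicity in $((1_{K\cap H})^0)^K$, and you then need $[(((1_H)^0)^G)_K,\eta]$ to be odd; but $(((1_H)^0)^G)_K$ equals $((1_{K\cap H})^0)^K$ only when $G=KH$. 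In general Mackey gives $(((1_H)^0)^G)_K=\sum_{i=1}^{r}\bigl(((1_{K\cap H})^0)^K\bigr)^{g_i}$ with $r=[G:KH]$, and the odd multiplicity of $\eta$ in one summand can be destroyed in the sum (two conjugates of $\eta$ of multiplicity $1$ and a third of multiplicity $0$ already give total $2$). The paper avoids this by going in the opposite direction: it first proves the result for $J=HK$ (where the Mackey decomposition has a single term) and then induces \emph{up} from $J$ to $G$ via the statement $[\theta^G,\phi]=1$ of Lemma \ref{below2Br}. That route yields existence of a nontrivial real constituent but not the ``formally stronger'' odd-multiplicity assertion you claim in full generality; the paper establishes odd multiplicity only for $H$ maximal (and, under extra hypotheses, in Theorem \ref{mainbrauers2}), and your argument does not establish it either.

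Two secondary points. Your step ``I may assume $HV=G$ for every nontrivial normal $V$'' is imported from the ordinary-character proof of Theorem \ref{oddmult}; for Brauer characters the multiplicity comparison between $((1_H)^0)^G$ and $((1_{HV})^0)^G$ is only available when $|V|$ is odd (Lemma \ref{coprimemult}), so one should instead reduce to $H$ maximal by induction on $[G:H]$, as the paper does. Also, your parity argument only shows that real elements of \emph{odd order} have fixed points, and you propose to reprove the results of Section \ref{sec:primitive} under that weaker hypothesis (which would require rechecking, e.g., the unipotent and involution classes used in the proof of Theorem \ref{simplereal}). This is unnecessary: if $g$ is real then $g_{2'}$ is real of odd order and fixes an odd number of points, $\langle g_2\rangle$ permutes that fixed-point set and so fixes a point of it, hence $g$ itself has a fixed point and Theorem \ref{primitivereal} applies as stated.
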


\begin{proof}  First assume that $G=KH$.  

Arguing by induction on $|G|$ and then on the index $[G:H]$, it suffices to assume that $H$ is maximal in $G$
(note that this maximal subgroup does not contain $K$). 
In this case, we will prove that there is a real constituent of odd multiplicity.

We may assume that $H$ is corefree.    Since $H$ has odd index
and every nontrivial  self dual irreducible Brauer character has even degree,
it follows that the trivial character occurs an odd number of times in $((1_H)^0)^G$.
Arguing as in the proof of Theorem \ref{cor} shows
that the result follows unless every odd order real element has an odd number
of fixed points on $\{Hx \, |\, x \in G\}$.
   Since $[G:H]$ is odd, this   implies the same for any element
whose odd part is real (because if $g \in G$, then   $\langle g_2\rangle$ acts
on the set of  points fixed by $g_{2'}$). Hence $H$ must intersect every conjugacy class
of real elements.   Now by Theorem C,  $K$ is a direct product of copies
of a simple nonabelian group $L$ and $L$ is
isomorphic to $M_{22}$, $M_{23}$ or $\PSL_n(q)$
with $n$ odd.

Since we are excluding the possibility that $L=M_{23}$ or $\PSL_n(q)$ with $q$ even, 
it follows by Theorem C that  $L \cong M_{22}$ or $\PSL_n(q)$
with $n$ odd and $q$ odd and moreover the structure of $M \cap K$
is given in Theorem C.   Arguing as in the case of ordinary characters,
the result follows once we know that $((1_{M \cap L})^0)^L$ has a nontrivial
real constituent.  
If $L=M_{22}$, then the result follows by inspection.  If $G=\PSL_n(q)$
with $q$ odd, then the permutation module in characteristic $2$ 
is $k \oplus W$ with $W$ irreducible (and so self dual).   This follows
from the fact that the smallest representation of $\PSL_n(q)$
with $nq$ odd in characteristic $2$ has dimension ${q^n-1 \over q-1} - 1$
\cite{GPPS}.  
 
The result follows in this case.     Now consider the general case.   Set $J=HK$.
Then we know that $((1_H)^0)^J$ has a nontrivial real constituent $\theta$.   By 
Lemma \ref{below2Br} , then there exists a nontrivial real irreducible Brauer
character $\phi$ such that $[\theta^G, \phi]=1$ whence $\phi$ is a real nontrivial
constituent of $((1_H)^0)^G$.  
\end{proof}

If we strengthen the hypotheses a bit, we can prove the analog of Theorem B
for $2$-Brauer characters
for subgroups of odd index.   We first state an elementary lemma.

\begin{lem} \label{coprimemult}   Let $G$ be a finite group, $H$ a subgroup 
of $G$, and
$A$ a normal subgroup of $G$
of order not divisible by $p$.    Let $\phi$ be an irreducible
$p$-Brauer character with $A \le \ker{\phi}$.  
Then $[((1_H)^0)^G, \phi]=[((1_{HA})^0)^G, \phi]$.   
\end{lem}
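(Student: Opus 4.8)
The plan is to translate the statement into module theory over a splitting field $k$ of characteristic $p$ and reduce everything to the quotient $\bar G = G/A$. Let $S$ be the simple $kG$-module affording $\phi$, so that $A$ acts trivially on $S$, and recall that for any finitely generated $kG$-module $M$ the composition multiplicity $[M:S]$ equals the corresponding coefficient in the Brauer character of $M$. Since $((1_H)^0)^G$ is exactly the Brauer character of the permutation module $k[G/H]$ (induction commutes with restriction to $p$-regular elements, and the value at a $p$-regular $g$ counts fixed cosets), and likewise $((1_{HA})^0)^G$ is the Brauer character of $k[G/HA]$, the two quantities in the statement are $[k[G/H]:S]$ and $[k[G/HA]:S]$. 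So it suffices to prove these composition multiplicities agree.

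The main tool is the element $e=\frac{1}{|A|}\sum_{a\in A}a\in kG$, which makes sense because $p\nmid|A|$. I would first check that $e$ is a central idempotent of $kG$: idempotency is the usual averaging computation, and centrality follows from $A\triangleleft G$, since conjugation by any $g\in G$ merely permutes $A$. For a $kG$-module $M$ one then has $eM=M^{A}$, the functor $M\mapsto eM$ is exact (multiplication by a central idempotent), and on a simple module $T$ it returns $T$ when $A$ acts trivially on $T$ and $0$ otherwise. Consequently, applying $e$ to a composition series shows $[M:S]=[eM:S]$ for our $A$-trivial $S$, where the right-hand multiplicity is computed in the category of $k\bar G$-modules (note $kGe\cong k\bar G$ via $ge\mapsto gA$, and $eM$ is a $k\bar G$-module).

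It then remains to identify the fixed-point modules. Here I would show $k[G/H]^{A}\cong k[G/HA]$ as $k\bar G$-modules: the $A$-fixed points of the permutation module $k[G/H]$ have as a basis the sums over the $A$-orbits on $G/H$, and because $A$ is normal these orbits are precisely the fibres of $gH\mapsto g(HA)$, i.e. they form the $G$-set $G/HA$; thus $G$ permutes the orbit-sums exactly as it permutes $G/HA$. Combining the last two steps gives $[k[G/H]:S]=[k[G/H]^{A}:S]=[k[G/HA]:S]$, which is the desired identity.

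The step I expect to require the most care is the identification $k[G/H]^{A}\cong k[G/HA]$ together with the bookkeeping that passes multiplicities between $kG$ and $k\bar G$; once the central idempotent $e$ is in place these are routine but must be stated cleanly. Equivalently, one could avoid fixed points and instead use $\dim\Hom_{kG}(P_S,\Ind_H^G k)=\dim\Hom_{kH}(\Res_H P_S,k)$ for the projective cover $P_S$ of $S$, prove that $A$ acts trivially on $P_S$ via the same idempotent, and observe that $A$-invariance of a linear map to the trivial module is automatic, so that the Hom-spaces for $H$ and for $HA$ literally coincide and hence have equal dimension.
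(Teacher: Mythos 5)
Your proof is correct and is essentially the paper's argument: both exploit the coprime splitting of the permutation module into $A$-fixed points and a complement with no $A$-trivial composition factors (your central idempotent $e$ realizes exactly the paper's decomposition $M=[A,M]\oplus C_M(A)$), and both identify the fixed-point summand with $k[G/HA]$ — you via orbit sums, the paper via the natural surjection $k[G/H]\to k[G/HA]$ and a dimension count. Your alternative sketch via $\Hom_{kG}(P_S,\cdot)$ and adjunction is also valid, but the main line is the same proof.
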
 

\begin{proof}  Let $k$ be an algebraically closed field of characteristic $p$ and let $M=k[G/H]$
be the permutation module.   Since $|A|$ has order prime to $p$,   $M=[A,M] \oplus C_M(A)$.
Also, note that there is a surjection $f$ of $kG$-modules from $k[G/H]$ onto $k[G/J]$.
Since $A$ acts trivially on $k[G/J]$ and since the dimension of the fixed points of 
$A$ on $k[G/H]$ is $[G:J]$, we see that if $A \le \ker\phi$,  then
$\phi$ is not a composition factor of the kernel of $f$ whence the claim.
\end{proof}

\begin{thm}\label{mainbrauers2}
Let $G$ be a finite group with a proper subgroup $H$ of odd index.  Set $K=\Oh{2'}G$
and  assume that $G =HK$.  
Assume that  $K$ has no composition factor isomorphic to $M_{22}, M_{23}$ or $\PSL_n(q)$ with
$n$ odd. 
Then  the $2$-Brauer character $((1_H)^0)^G$ has a nontrivial real-valued
irreducible constituent $\phi$ of odd multiplicity. 
\end{thm}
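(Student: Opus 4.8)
The plan is to run a minimal counterexample and reduce, exactly as in Theorems~\ref{cor} and~\ref{mainbrauers}, to the statement that the hypotheses force $H$ to meet every real conjugacy class of $G$, which then clashes with Corollary~\ref{cor:oddreal}. Concretely, suppose $(G,H)$ is a counterexample with $|G|$ minimal, so that every nontrivial real-valued irreducible $2$-Brauer constituent of $((1_H)^0)^G$ occurs with even multiplicity. First I would reduce to ${\rm core}_G(H)=1$: since ${\rm core}_G(H)$ acts trivially on the permutation module $k[G/H]$, every constituent of $((1_H)^0)^G$ is inflated from $G/{\rm core}_G(H)$ with unchanged multiplicity, and the hypotheses $G=HK$ and the restriction on the composition factors of $K=\Oh{2'}G$ pass to the quotient; hence minimality gives ${\rm core}_G(H)=1$, and since a Sylow $2$-subgroup lies in $H$ (the index is odd) this forces $\oh 2 G=1$.

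Next comes the parity step. Writing $((1_H)^0)^G=a\,{1_G}^0+\sum_i b_i(\varphi_i+\bar\varphi_i)+\sum_j c_j\mu_j$ with the $\varphi_i$ non-real, the $\mu_j$ real and nontrivial, and every $c_j$ even, the fact that nontrivial self-dual irreducible $2$-Brauer characters have even degree (Theorem~2.30 of \cite{N}) together with $[G:H]$ odd forces $a$ to be odd. For a real element $x$ of odd order every irreducible Brauer character is real-valued at $x$ (because $\varphi(x^{-1})=\overline{\varphi(x)}$ while $x\sim x^{-1}$), so $\varphi_i(x)=\bar\varphi_i(x)$ and therefore $((1_H)^0)^G(x)=a+2\alpha$ for an algebraic integer $\alpha$. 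Since on the $2$-regular element $x$ this Brauer character value equals the ordinary number of fixed points of $x$ on $G/H$, it is a rational integer, whence $\alpha\in\ZZ$ and $((1_H)^0)^G(x)\equiv a\equiv 1\pmod 2$. In particular $x$ has a fixed point, i.e. is conjugate into $H$. Letting $\langle x_2\rangle$ act on the odd-sized fixed-point set of $x_{2'}$ promotes this to: every real element of $G$ is conjugate into $H$, so $H$ meets every real conjugacy class of $G$.

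Finally I would derive the contradiction from a minimal normal subgroup $A$ of $G$. If $A$ is nonabelian, then $A=L_1\times\cdots\times L_t$ with $L_i\cong L$ simple; as $A$ is perfect, $A=\Oh{2'}A\le K$, so $L$ is a composition factor of $K$ and hence, by hypothesis, is not $M_{22}$, $M_{23}$ or $\PSL_n(q)$ with $n$ odd. Applying Corollary~\ref{cor:oddreal} to the faithful transitive action of $G$ on $G/H$ (of odd degree) with this $A$ produces a real element of $A$ with no fixed point, contradicting that $H$ meets every real class. If instead $A$ is abelian, it is elementary abelian of odd order (as $\oh 2 G=1$); using Lemma~\ref{coprimemult} to descend through $G/A$, minimality lets me assume $HA=G$, so that $A\cap H$ and $\cent HA$ are normal in $G$ and hence trivial. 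Then a nontrivial Sylow $2$-subgroup of $H$ acts nontrivially on $A$ and inverts some $1\ne a\in A$ (Lemma~3.1(d) of \cite{DMN}); this $a$ is real with $a^G\cap H\subseteq A\cap H=1$, again contradicting that $H$ meets every real class. The main obstacle is the parity computation: it uses both that characters are real-valued on real elements (to gain the factor of $2$ from the non-real pairs) and that a Brauer character agrees with the ordinary fixed-point count on $2$-regular elements. Once $H$ is known to meet every real class, Corollary~\ref{cor:oddreal} and the abelian argument close the proof at once.
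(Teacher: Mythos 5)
Your proposal is correct and follows essentially the same route as the paper: a minimal counterexample with $H$ corefree, the parity/algebraic-integer argument (as in Theorem \ref{cor}) promoting "odd fixed-point count" from real elements of odd order to all real elements, the abelian minimal normal subgroup handled via Lemma \ref{coprimemult} and the inversion lemma from \cite{DMN}, and the nonabelian case closed by Corollary \ref{cor:oddreal}. The only differences are cosmetic (e.g.\ you kill $\cent HA$ by noting it is normal in $G=HA$ and contained in the corefree $H$, where the paper argues via $[K,A]=1$), so there is nothing to flag.
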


\begin{proof}  We induct on $|G|$ and then on $[G:H]$.   
Consider a minimal counterexample.   So  $H$ is corefree and   
$\oh 2 G=1$.   We use induction
on the index.   If there exists $x \in G$ real of odd order with $(1_H)^G(x)$ even,
then as we have seen before the result follows.    So we may assume that $(1_H)^G(x)$ is odd for such
$x$, whence the same is true for all real $x$.   In particular, we may assume that
$H \cap x^G$ is nonempty for all real $x$.  

Suppose there exists a minimal normal abelian subgroup
  $A$ of $G$ contained in $K$.   
  
First suppose that $J:=HA \ne G$.   By minimality, it follow that there exists
a nontrivial real constituent $\phi$ of $((1_J)^0)^G$ of odd multiplicity. 
By Lemma \ref{coprimemult}, $[((1_H)^0)^G, \phi]=[((1_{J})^0)^G, \phi]$ is odd,
a contradiction.

Suppose that $G=J$.   Then $H \cap A$ is normal in $G$, whence $H \cap A=1$.
If a Sylow $2$-subgroup centralizes $A$, then $[K,A]=1$, whence $A \le \zent G$
and so $G= H \times A$ and $K \le H$, a contradiction.   If not, then $A$ contains
real elements but if $1 \ne x \in A$ is real, then $x^G \subseteq A$ is disjoint from
$H$, a contradiction.  

So we may assume that  ${\bf F}(K)=1$.   Let $A$ be a minimal normal subgroup of 
$G$ contained in $K$.   So $A=L_1 \times \ldots \times L_t$ where $L_i \cong L$
is a nonabelian simple group (and not isomorphic to one of the forbidden composition
factors).  Since every real element of $A$ is conjugate to
an element of $H$,  Corollary \ref{cor:oddreal} implies the result.

\end{proof}

We close by noting that if $[G:H]$ is even, it is easy to see that the trivial 2-Brauer
character occurs with even multiplicity in $((1_H)^0)^G$ and so in particular there
are at least two real valued irreducible constituents.

\end{document}